\numberwithin{equation}{section}
\newtheorem{theorem}{Theorem}[section]
\newtheorem{lemma}[theorem]{Lemma}
\newtheorem{lemma-def}[theorem]{Lemma-Definition}
\newtheorem{prop}[theorem]{Proposition}
\newtheorem{cor}[theorem]{Corollary}
\newtheorem{conj}[theorem]{Conjecture}
\theoremstyle{definition}
\newtheorem{convention}[theorem]{Convention}
\newtheorem{hypothesis}[theorem]{Hypothesis}
\newtheorem{notation}[theorem]{Notation}
\newtheorem{defn}[theorem]{Definition}
\theoremstyle{remark}
\newtheorem{remark}[theorem]{Remark}
\newcommand{\AAA}{\mathbb{A}}
\newcommand{\CC}{\mathbb{C}}
\newcommand{\PP}{\mathbb{P}}
\newcommand{\QQ}{\mathbb{Q}}
\newcommand{\RR}{\mathbb{R}}
\newcommand{\ZZ}{\mathbb{Z}}
\newcommand{\calC}{\mathcal{C}}
\newcommand{\calE}{\mathcal{E}}
\newcommand{\calF}{\mathcal{F}}
\newcommand{\calG}{\mathcal{G}}
\newcommand{\calL}{\mathcal{L}}
\newcommand{\calO}{\mathcal{O}}
\newcommand{\dual}{\vee}
\DeclareMathOperator{\Ext}{Ext}
\DeclareMathOperator{\FIsoc}{\mathbf{F-Isoc}}
\DeclareMathOperator{\Gal}{Gal}
\DeclareMathOperator{\GL}{GL}
\DeclareMathOperator{\rank}{rank}
\DeclareMathOperator{\Spec}{Spec}
\DeclareMathOperator{\Swan}{Swan}
\DeclareMathOperator{\Weil}{\mathbf{Weil}}
\title{\'Etale and crystalline companions, I}
\author{Kiran S. Kedlaya}
\address{Department of Mathematics,
University of California San Diego,
La Jolla, CA 92093-0112}
\email{kedlaya@ucsd.edu}
\begin{document}



\maketitle

\begin{prelims}

\DisplayAbstractInEnglish

\bigskip

\DisplayKeyWords

\medskip

\DisplayMSCclass







\end{prelims}


\newpage

\setcounter{tocdepth}{1}

\tableofcontents


\section{Introduction}

\subsection{Coefficients, companions, and conjectures}

Throughout this introduction (but not beyond; see \S\ref{sec:companions}), let $k$ be a finite field of characteristic $p$
and let $X$ be a smooth scheme over $k$. When studying the cohomology of motives over $X$, one typically fixes a prime $\ell \neq p$
and considers \'etale cohomology with $\ell$-adic coefficients; in this setting,
the natural coefficient objects of locally constant rank are the \emph{lisse Weil $\overline{\QQ}_\ell$-sheaves}. However, \'etale cohomology with $p$-adic coefficients behaves poorly in characteristic $p$; for $\ell=p$, the correct choice for a Weil cohomology with $\ell$-adic coefficients is Berthelot's \emph{rigid cohomology}, wherein
the natural coefficient objects of locally constant rank are the \emph{overconvergent $F$-isocrystals}. 
For the purposes of this introduction, the exact nature of these objects is not material; we suggest
our recent survey \cite{kedlaya-isocrystals} as a starting point for this topic.

For $\calE$ a coefficient object on $X$ and $x$ a closed point of $X$, let $\calE_x$ denote the fiber of $\calE$ at $x$. This is a finite-dimensional $\overline{\QQ}_\ell$-vector space carrying an invertible linear action by (geometric) Frobenius, whose reverse characteristic polynomial we denote by $P(\calE_x, T)$. 

For the remainder of this introduction, let $\ell, \ell'$ be two primes (which may or may not equal each other and/or $p$), and fix an isomorphism of the algebraic closures of $\QQ$ within $\overline{\QQ}_\ell$ and $\overline{\QQ}_{\ell'}$ (which may be nontrivial in case $\ell = \ell'$).
We say that an $\ell$-adic coefficient object $\calE$ and an $\ell'$-adic coefficient object $\calE'$ are \emph{companions} if  for all $x$,
the characteristic polynomials of Frobenius on $\calE_x$ and $\calE'_x$ coincide;
given this relation, each object determines the other up to semisimplification (see Theorem~\ref{T:chebotarev}).

In his work on the Weil conjectures, Deligne introduced a far-reaching set of conjectures
\cite[Conjecture~1.2.10]{deligne-weil2} motivated by the idea that coefficient objects should only exist for ``geometric reasons'' (more on which below); we state here a lightly modified version of this conjecture. 
(Compare \cite[Conjecture~1.1]{cadoret}.)

\begin{conj}[after Deligne] \label{conj:deligne}
Let $\calE$ be an $\ell$-adic coefficient object which is irreducible with determinant of finite order.
(Remember that $\ell$ may or may not equal $p$.)
\begin{enumerate}
\item[(i)]
$\calE$ is \emph{pure of weight $0$}: for every algebraic embedding of $\overline{\QQ}_\ell$ into $\CC$,
for all $x$, the roots of $P(\calE_x, T)$ in $\CC$ all have complex absolute value $1$.
$($One can avoid having to embed $\overline{\QQ}_\ell$ into $\CC$ by reading (ii) first.$)$
\item[(ii)]
For some number field $E$, 
$\calE$ is \emph{$E$-algebraic}: for all $x$, we have $P(\calE_x, T) \in E[T]$.
\item[(iii)]
$\calE$ is \emph{$p$-plain}: for all $x$, the roots of $P(\calE_x, T)$ have trivial $\lambda$-adic valuation at all finite places of $\lambda$ of $E$ not lying above $p$.
$($The terminology here follows D'Addezio \cite[Definition~2.0.2]{daddezio}; the terminology used by Chin \cite{chin2} is \emph{plain of characteristic $p$}.$)$

\item[(iv)]
For every place $\lambda$ of $E$ above $p$, for all $x$, the roots of $P(\calE_x, T)$ have $\lambda$-adic valuation at most
$\frac{1}{2} \rank(\calE)$ times the valuation of $\#\kappa(x)$ $($the order of the residue field$\,)$.
\item[(v)]
For every prime $\ell' \neq p$, there exists an $\ell'$-adic coefficient object $\calE'$ which is irreducible with determinant of finite order and is a companion of $\calE$.
\item[(vi)]
As in (v), but with $\ell' = p$. $($We separate this case to maintain parallelism with \cite[Conjecture~1.2.10]{deligne-weil2}.$)$
\end{enumerate}
\end{conj}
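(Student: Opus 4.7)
The plan is to reduce every clause to the case where $X$ is a smooth curve, where the Langlands correspondence for $\GL_n$ over a global function field gives a bijection between irreducible coefficient objects with determinant of finite order and cuspidal automorphic representations: this is available in the \'etale setting (Lafforgue) and, for (vi), in the crystalline setting (Abe). Because the automorphic side is manifestly independent of $\ell$, such a bijection produces companions of every coefficient object on a curve uniformly in $\ell$; purity (i), $E$-algebraicity (ii), and the $p$-adic valuation constraints (iii)--(iv) for the curve case then follow from Ramanujan-type bounds valid on the automorphic side in the function-field setting.

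For general smooth $X$, I would first deduce (i)--(iv) by restriction to curves. Given a closed point $x \in X$, a Bertini-style argument produces a smooth connected curve $C \hookrightarrow X$ through $x$; since $P(\calE_x, T) = P((\calE|_C)_x, T)$, the curve case suffices. For (v), the plan is the method of Deligne--Drinfeld: on each smooth curve $C \hookrightarrow X$ construct the \'etale companion $\calE'_C$ of $\calE|_C$ from the curve case, and glue these into a global $\ell'$-adic Weil sheaf $\calE'$ on $X$. Compatibility of the $\calE'_C$ under further restriction reduces, via Chebotarev (Theorem~\ref{T:chebotarev}), to the fact that each $\calE'_C$ is determined up to semisimplification by its Frobenius traces at closed points, which agree with those of $\calE$ by construction. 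The remaining ingredient is a Wiesend-type descent statement giving an equivalence between lisse Weil $\overline{\QQ}_{\ell'}$-sheaves on $X$ and suitably compatible systems of such sheaves on the curves in $X$.

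The substantive obstacle is (vi), producing a crystalline companion. Unlike the \'etale setting, overconvergent $F$-isocrystals are not classified by any fundamental group, and overconvergence at the boundary of $X$ is an intrinsically global condition that does not glue naively from curves. I expect the key step to be a descent theorem reconstructing an overconvergent $F$-isocrystal on $X$ from a compatible family on a sufficiently rich collection of curves, with uniform control on radii of convergence along a chosen compactification. Proving this will likely require the arithmetic $\mathcal{D}$-module theory of Berthelot, Caro, and Abe, a $p$-adic local monodromy theorem to handle ramification along added divisors, and a Tannakian framework to upgrade compatibility of traces into genuine isomorphisms of global objects. The reverse implication, constructing an \'etale companion from a given crystalline one, should be comparatively more tractable, since the target category is classified by its fundamental group and admits Wiesend-style descent; this is indeed what the abstract announces as the main theorem of the present paper.
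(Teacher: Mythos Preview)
Your high-level strategy matches the paper's: reduce to curves via the Langlands correspondence (Lafforgue, Abe), then for general $X$ use Deligne's and Drinfeld's methods to deduce (i)--(v), and acknowledge that (vi) in dimension $>1$ remains open. But several of your reductions are too glib, and the omissions are precisely where the paper does real work.

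For (iii) and (iv) you write that restriction to a curve $C$ through $x$ suffices because $P(\calE_x,T) = P((\calE|_C)_x,T)$. That equality is true, but the curve case of (iii)--(iv) applies only to objects that are \emph{irreducible with finite-order determinant}, and $\calE|_C$ need not be irreducible. Producing a curve through $x$ on which $\calE$ stays irreducible is the Lefschetz slicing principle (Lemma~\ref{L:slicing}), and it is nontrivial --- the usual \'etale proof via residual representations does not transpose to the crystalline side, so the paper gives a weight-theoretic argument (after first establishing Theorem~\ref{T:weight filtration}). You have not addressed this.

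For (ii), pointwise restriction to curves only tells you that each $P(\calE_x,T)$ lies in \emph{some} number field depending on $x$ and on the chosen curve; getting a single $E$ valid for all $x$ requires Deligne's uniformity argument (Theorem~\ref{T:Deligne finite}), which is a separate and substantive step you have not mentioned.

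For (v), your outline (companions on curves plus Wiesend/Drinfeld descent) is correct in shape, but Drinfeld's theorem has a second hypothesis: the curve-by-curve companions must become uniformly tame after pullback along a single fixed alteration of $X$. In the \'etale case this follows from trivializing a residual representation; in the crystalline case it needs the semistable reduction theorem for overconvergent $F$-isocrystals (Proposition~\ref{P:alter to tame}). Without this uniformity the gluing does not go through.

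Your discussion of (vi) correctly identifies the difficulty and the direction the paper leaves for its sequel.
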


By way of comparison with \cite[Conjecture~1.2.10]{deligne-weil2}, we restrict to $X$ smooth (not just normal) but do not require $X$ to be connected; we allow $\ell=p$ in the initial data (which is relevant for our main result); and 
in (vi) we incorporate Crew's proposal to interpret Deligne's undefined phrase ``petits camarades cristallins'' to mean ``overconvergent $F$-isocrystals'' (a concept which did not exist at the time of \cite{deligne-weil2};
see \cite[Conjecture~4.13]{crew-mono} and \cite[Conjecture~D]{abe-crelle}).

The purpose of this paper is to consolidate and extend our knowledge towards this conjecture; this includes extensive work by other authors (attributed in this introduction) as well as original results as noted. 
To summarize, when $\dim(X) = 1$ everything is known; when $\dim(X) > 1$ everything except part (vi) is known;
and part (vi) will be addressed in a subsequent paper \cite{kedlaya-companions2}.

\subsection{Curves and the Langlands correspondence}

A key step in the proof of Conjecture~\ref{conj:deligne}
is the construction of the $\ell$-adic Langlands correspondence for the group $\GL_n$ over a global function field of positive characteristic. This was already anticipated by Deligne in \cite{deligne-weil2}; at the time, the Langlands correspondence was available for $n=2$, $\ell \neq p$ by the work of Drinfeld \cite{drinfeld}.
Deligne observed that Drinfeld's construction not only established the requisite bijection between $\ell$-adic coefficient objects on a curve and automorphic representations on the associated function field,
but also realized the latter as the realizations of certain motives appearing in moduli spaces of shtukas; this then provides a geometric origin for these coefficient objects.

Following Drinfeld's approach,
L. Lafforgue \cite{lafforgue} established the Langlands correspondence for $\GL_n$ 
for arbitrary $n$ for $\ell \neq p$. The case $\ell=p$ had to be excluded at the time due to the limited foundational development of $p$-adic cohomology at the time; this issue has subsequently been remedied (see \cite{kedlaya-isocrystals} for a summary), and this enabled Abe \cite{abe-companion} to extend Lafforgue's work to the case $\ell=p$. 
This immediately implies all of Conjecture~\ref{conj:deligne} in the case where $\dim(X) = 1$
\cite[Th\`eor\'eme~VII.6]{lafforgue}, \cite[\S 4.4]{abe-companion}.

To sum up, we have in hand the following statement, which we will use as a black box in what follows;
see \S\ref{subsec:one-dim} for the proof.

\begin{theorem} \label{T:one-dim case}
Conjecture~\ref{conj:deligne} holds in all cases where $\dim(X) = 1$.
\end{theorem}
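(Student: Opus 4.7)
The plan is to take the $\ell$-adic Langlands correspondence for $\GL_n$ over global function fields as a black box and deduce every clause of Conjecture~\ref{conj:deligne} from its stated properties.

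First I would reduce to the case that $X$ is a smooth geometrically connected curve. A coefficient object on a smooth $1$-dimensional scheme decomposes across connected components, and on each connected component one may replace $k$ by the field of constants without affecting any clause of the conjecture, since purity, algebraicity, $p$-plainness, the Newton bound and the existence of companions are all phrased purely in terms of closed points and their residue fields. The hypothesis that $\calE$ is irreducible with determinant of finite order passes to each component.

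Next I would invoke Lafforgue's theorem (Th\'eor\`eme~VII.6 of \cite{lafforgue}) when $\ell \neq p$, and Abe's theorem (\S 4.4 of \cite{abe-companion}) when $\ell = p$. Each attaches to $\calE$ a cuspidal automorphic representation $\pi$ of $\GL_n(\AAA_F)$, where $F$ is the function field of $X$, such that the local Satake parameter of $\pi$ at each unramified place $x$ produces exactly the reverse characteristic polynomial $P(\calE_x, T)$. The key point is that the resulting bijection between irreducible coefficient objects (with prescribed ramification and determinant of finite order) and cuspidal automorphic representations (with matching ramification and central character) is indexed by data that do not depend on $\ell$.

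From this independence, parts (v) and (vi) are immediate: starting from $\calE$, extract $\pi$, and apply the correspondence in the $\ell'$-adic direction to obtain an irreducible $\ell'$-adic coefficient object $\calE'$ with finite-order determinant whose Frobenius characteristic polynomials match those coming from $\pi$, and hence those of $\calE$. Parts (i)--(iv) are intrinsic outputs of Lafforgue's and Abe's theorems: the object $\calE$ produced there is pure of weight $0$ (yielding (i)); its Hecke eigenvalues are algebraic and lie in a single number field $E$ (yielding (ii)); they are $\lambda$-adic units at every finite place of $E$ not above $p$ (yielding (iii)); and the Newton polygon bound at places above $p$ is inherited from the realization of $\pi$ in the cohomology of moduli of shtukas (or its overconvergent $F$-isocrystal analogue when $\ell = p$), whose Frobenius slopes are controlled by the geometry of the realizing variety (yielding (iv)).

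The main obstacle in this program is the existence of the crystalline Langlands correspondence itself, which required decades of development in rigid cohomology, arithmetic $\mathcal{D}$-modules, and Deligne--Laumon--Lafforgue style geometric techniques adapted to $F$-isocrystals, culminating in Abe's theorem. Here that work is input by citation, so the proof reduces to extracting and assembling the conclusions above.
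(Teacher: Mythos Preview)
Your overall approach---take the Langlands correspondence of L.~Lafforgue and Abe as a black box and read off each clause of Conjecture~\ref{conj:deligne}---is exactly the paper's approach, and your treatment of (ii), (v), (vi) and the reduction to a geometrically connected curve is fine. The paper likewise notes that one ``could simply assume $\ell \neq p$ and appeal to \cite[Th\'eor\`eme~VII.6]{lafforgue}.''

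There is, however, a genuine gap in your handling of~(iv). The bound you want is that each Frobenius eigenvalue has $\lambda$-adic valuation at most $\tfrac{r}{2}$ times that of $\#\kappa(x)$. Your justification---that the slopes are ``controlled by the geometry of the realizing variety'' via the shtuka cohomology---is not what actually happens. L.~Lafforgue's own argument along those lines (\cite[Th\'eor\`eme~VII.2]{lafforgue}) only produces the weaker factor $\tfrac{(r-1)^2}{r}$, which is strictly larger than $\tfrac{r}{2}$ for $r \geq 3$. The sharp bound requires a separate input: either V.~Lafforgue's estimate on $p$-adic valuations of Hecke eigenvalues \cite[Corollaire~2.2]{lafforgue-hecke}, carried out on the automorphic side, or the Drinfeld--Kedlaya slope-gap theorem (Proposition~\ref{P:uniform bound on slopes}) applied to the crystalline companion. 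You need to cite one of these; the shtuka realization alone does not suffice.

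A smaller point: for (i) and (iii) you assert that purity and $p$-plainness are ``intrinsic outputs'' of the correspondence without saying how. The paper spells out the mechanisms: for (i), pass to the direct sum of Galois conjugates to obtain a $\QQ$-algebraic (hence $\iota$-real) sheaf and invoke \cite[Th\'eor\`eme~1.5.1]{deligne-weil2}; for (iii), construct the $\ell'$-adic companion for $\ell'$ the rational prime under $\lambda$ and observe that an irreducible lisse Weil sheaf with finite-order determinant is an \'etale sheaf, forcing the eigenvalues to be $\ell'$-adic units. Your citations to \cite[Th\'eor\`eme~VII.6]{lafforgue} and \cite[\S 4.4]{abe-companion} do ultimately cover these, but it would strengthen the write-up to name the mechanisms rather than leave them implicit.
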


\subsection{Avoiding residual representations}

In order to make further progress without losing the case $\ell = p$, one must replace several key arguments in \'etale cohomology that make direct use of the representation-theoretic interpretation of \'etale coefficient objects.
These cannot be used for $\ell = p$ because only a rather small subcategory of coefficient objects can be described in terms of representations of the \'etale fundamental group (namely the \emph{unit-root} objects; see \S\ref{sec:Newton poly}). Such arguments can sometimes be circumvented using cohomological considerations, as in the following key examples.

\begin{enumerate}[label=$\bullet$]
\item
For $\ell \neq p$, the uniqueness of $\ell$-adic companions up to semisimplification is usually obtained by applying
the Chebotar\"ev density theorem to mod-$\ell^n$ representations
(as in the proof of \cite[Proposition~VI.11]{lafforgue}).
For $\ell = p$, we use instead an argument of Tsuzuki based on Deligne's theory of weights (see Theorem~\ref{T:chebotarev}). This result is already used crucially
in the work of Abe \cite[Proposition~A.4.1]{abe-companion},
and is akin to the argument of Deligne described below to control coefficient fields
(see especially \cite[Proposition~2.5]{deligne-finite}).
\item
The extension of Conjecture~\ref{conj:deligne} from curves to higher-dimensional varieties makes frequent use of a fact which we call the \emph{Lefschetz slicing principle}: for any closed point $x \in X$ and any irreducible coefficient object $\calE$ on $X$, after possibly making a finite base extension of $k$ (depending on $x$ and $\calE$), one can find a curve $C$ in $X$ containing $x$ such that the restriction of $\calE$ to $C$ is again irreducible. For $\ell \neq p$, this is shown by an analysis of the images of representations, as in \cite[\S 1.7]{deligne-finite} (correcting the proof of \cite[Proposition~VII.7]{lafforgue}; see also \cite[Appendix~B]{esnault-kerz}). Using cohomological arguments, we are able to make a fairly similar argument (Lemma~\ref{L:slicing}); a related but distinct approach has been taken by Abe--Esnault (see below).
\end{enumerate}

We mention briefly one further example for which cohomological considerations do not suffice.
For $\ell \neq p$, one can eliminate all wild monodromy at the boundary using a single finite \'etale cover
(by trivializing a residual representation); this plays an important role in the methods of Deligne and Drinfeld to attack aspects of Conjecture~\ref{conj:deligne} (see below). The analogous statement for $\ell = p$ is the 
\emph{semistable reduction theorem for overconvergent $F$-isocrystals}  (see Proposition~\ref{P:alter to tame}): for every $p$-adic coefficient object, one can pull back by a suitable alteration (in the sense of de Jong
\cite{dejong-alterations}) and then make a logarithmic extension to some good compactification.
The proof of this cannot be considered a ``cohomological argument'' because in fact the foundations of rigid cohomology are deeply rooted in this fact; see \cite[\S 11]{kedlaya-isocrystals} or the introduction to \cite{abe-companion}.

\subsection{Deligne's conjecture for $\dim(X) > 1$}

We now return to Conjecture~\ref{conj:deligne}.
By analogy with a conjecture of Simpson that quasi-unipotent rigid local systems on smooth complex varieties
should have a geometric/motivic origin (see \cite[Conjecture~4]{simpson} for the case of a proper variety),
it is generally expected that coefficient objects on a general $X$ should all arise from motives over $X$;
for any given coefficient object, such a description would imply Conjecture~\ref{conj:deligne} for this object.
(This conjecture is attributed to Deligne by Esnault--Kerz \cite[Conjecture~2.3]{esnault-kerz-notes}; see also
\cite[Question~1.4]{drinfeld-deligne}, \cite[Remark~1.5]{krishnamoorthy-pal}.)

Unfortunately, there seems to be no analogue of the Langlands correspondence which would provide geometric origins for coefficient objects on higher-dimensional varieties,
except in the case of weight 1 where some results are known (as in \cite{krishnamoorthy, krishnamoorthy-pal}),
and some isolated cases which can be described completely in terms of weight-1 data and hence might be tractable by similar methods (\textit{e.g.} weight-2 motives of K3 surfaces, via the Kuga--Satake construction). Instead, all known results towards Conjecture~\ref{conj:deligne} in cases where $\dim(X)>1$ proceed by using Theorem~\ref{T:one-dim case} as a black box, restricting to curves in $X$ to get a system of interrelated coefficient objects (such a system is called\footnote{As per \cite[\S 4]{cadoret}, this terminology is in fact due to Kindler.} a \emph{$2$-skeleton sheaf} by Esnault--Kerz).
\begin{enumerate}[label=$\bullet$]
\item
Statements (i), (iii), (iv) are pointwise conditions on $\calE$, so they immediately promote to general $X$ using the Lefschetz slicing principle.
\item
Statement (ii) was shown by Deligne \cite{deligne-finite} for $\ell \neq p$ using a numerical argument to give a uniform bound for the number field generated by the Frobenius traces over all curves in $X$.
The argument transposes to $\ell=p$, using semistable reduction for $p$-adic coefficient objects (see above) to provide the requisite control of wild ramification.
\item
Statement (v) was shown by Drinfeld \cite{drinfeld-deligne} for $\ell \neq p$ using Deligne's work on (ii) plus an argument to piece together companions on curves; the latter draws on the use of Hilbert irreducibility in the work of Wiesend \cite{wiesend1}. The argument transposes to $\ell=p$, again using semistable reduction for $p$-adic coefficient objects.
\end{enumerate}

To sum up, we have in hand the following statement; see Corollary~\ref{C:companion ell} for the proof.

\begin{theorem} \label{T:deligne conj2}
Parts (i)--(v) of Conjecture~\ref{conj:deligne} hold as written $($whether or not $\ell=p)$.
\end{theorem}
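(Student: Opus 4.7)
The plan is to take Theorem~\ref{T:one-dim case} as a black box and reduce the statement on the smooth variety $X$ to assertions about restrictions of $\calE$ to smooth curves $C \subset X$. The geometric engine is the Lefschetz slicing principle (Lemma~\ref{L:slicing}): for any closed point $x \in X$ and any irreducible coefficient object $\calE$, after possibly enlarging $k$, one can find a smooth curve $C \subset X$ through $x$ such that $\calE|_C$ remains irreducible, with its determinant still of finite order. Because parts (i), (iii), and (iv) of Conjecture~\ref{conj:deligne} assert only pointwise properties of $P(\calE_x, T)$, one deduces each of them on $X$ by running $x$ through all closed points, slicing, and applying Theorem~\ref{T:one-dim case} to $\calE|_C$.

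Part (ii) is no longer purely pointwise: each slicing gives some number field $E_C$ in which the characteristic polynomials over $C$ take values, and the content of (ii) is that a single $E$ works for all $C$. I would follow Deligne's argument in \cite{deligne-finite}: bound the degree $[E_C : \QQ]$ uniformly by controlling the determinants, tame monodromy, and Swan conductors of $\calE$ along a fixed compactification of $X$. For $\ell \neq p$, Deligne kills wild ramification by passing to a residual cover. For $\ell = p$ one substitutes the semistable reduction theorem for overconvergent $F$-isocrystals (Proposition~\ref{P:alter to tame}): after an alteration one can reduce to the case with log-structure along the boundary, which gives the same uniform control of ramification invariants on curves meeting the boundary transversally. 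Combined with Chebotarev-style uniqueness (Theorem~\ref{T:chebotarev}), this forces the $E_C$ to have uniformly bounded degree, hence to be contained in a common $E$.

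Part (v) is the most delicate and, I expect, the main obstacle. For each smooth curve $C \subset X$ and each $\ell' \neq p$, Theorem~\ref{T:one-dim case} produces an $\ell'$-adic companion $\calE'_C$, determined up to semisimplification by Theorem~\ref{T:chebotarev}; the task is to glue these into a single $\ell'$-adic coefficient object $\calE'$ on $X$. Following Drinfeld \cite{drinfeld-deligne}, I would package the $(\calE'_C)$ as a $2$-skeleton sheaf, use part (ii) (and its counterpart for $\calE'$) to work over a single coefficient field $E$, and then extend from curves to $X$ via a Hilbert-irreducibility argument in the style of Wiesend \cite{wiesend1}, using the boundedness of the associated \'etale representation on sufficiently many curves to descend to $X$. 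When the input object $\calE$ is $p$-adic, the novelty compared to the $\ell \neq p$ case of \cite{drinfeld-deligne} is that one must again feed in semistable reduction for $F$-isocrystals so that the compatible system of tame ramification data used in the gluing step is available; this is where the absence of residual-representation arguments in the $p$-adic setting really bites, and where the cohomological substitutes developed earlier in the paper (Theorem~\ref{T:chebotarev} and Lemma~\ref{L:slicing}) are indispensable.
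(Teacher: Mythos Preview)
Your outline matches the paper's strategy for parts (ii)--(v) quite closely: Deligne's argument from \cite{deligne-finite} for (ii) (with Proposition~\ref{P:alter to tame} supplying the wild-ramification control in the crystalline case), and Drinfeld's gluing from \cite{drinfeld-deligne} for (v). For (iii) and (iv) the paper also proceeds exactly as you describe, via Lemma~\ref{L:slicing} and Theorem~\ref{T:one-dim case}; this is the content of Lemma~\ref{L:valuations}.

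There is, however, a logical ordering issue in your treatment of part (i). You propose to deduce purity pointwise by slicing through each closed point and invoking Theorem~\ref{T:one-dim case} on the resulting curve. But the proof of Lemma~\ref{L:slicing} in this paper \emph{uses} Theorem~\ref{T:weight filtration}---i.e., purity---to conclude that $\calE|_C$ admits an isotypical decomposition (via Corollary~\ref{C:geometrically semisimple}) and hence is irreducible. So, as stated, using Lemma~\ref{L:slicing} to establish (i) is circular. The paper breaks this cycle by proving Theorem~\ref{T:weight filtration} \emph{first}, via a separate inductive argument on $\dim X$ using Artin's elementary fibrations (Remark~\ref{R:elementary fibration}) and a multilinear-algebra trick rather than Lefschetz slices through a given point; only once purity is in hand does Lemma~\ref{L:slicing} become available for the remaining parts. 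The remark following Lemma~\ref{L:slicing} does sketch a weight-free route to slicing, so your ordering could be salvaged, but this requires an argument you have not supplied, and it is not the route the paper actually takes.
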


At this point, we gratefully acknowledge the recent joint paper of Abe--Esnault \cite{abe-esnault},
in which a result which is essentially equivalent to Theorem~\ref{T:deligne conj2} is proved.
That paper and this one constitute in some sense a virtual joint project, in that many of the ideas were discussed among the two sets of authors at an embryonic stage; we have opted to write the proofs up separately to illustrate different ways to assemble the main ingredients. See \S\ref{sec:abe-esnault} for a direct comparison.

We also wish to emphasize here that we are assuming that $X$ is smooth, rather than normal.
While parts (i)--(iv) are indifferent to this distinction (as discussed in detail in \cite{cadoret}),
parts (v) and (vi) are not; the examples of \cite[\S 6]{drinfeld-deligne} show that one cannot directly apply the method of skeleton sheaves when $X$ is not smooth, and so some additional ideas are needed.

\subsection{Towards crystalline companions}

Going forward, we are left to address part (vi) of Conjecture~\ref{conj:deligne},
or equivalently the following conjecture.
\begin{conj}  \label{conj:companion}
Any $\ell$-adic coefficient object on $X$ which is $E$-algebraic for some number field $E$
admits an $\ell'$-adic companion.
\end{conj}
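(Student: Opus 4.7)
The plan is to adapt Drinfeld's patching strategy for étale-to-étale companions, replacing the representation-theoretic ingredients with crystalline substitutes. Fix an algebraic $\ell$-adic coefficient object $\calE$ on $X$ with $\ell \neq p$; the goal is to produce an overconvergent $F$-isocrystal $\calE'$ on $X$ whose Frobenius characteristic polynomials at every closed point match those of $\calE$. The curve-by-curve input is available: Theorem~\ref{T:one-dim case} supplies, for each smooth curve $C \hookrightarrow X$, a crystalline companion $\calE'_C$ of $\calE|_C$, unique up to semisimplification by Theorem~\ref{T:chebotarev}.

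First I would promote these curve-wise companions into a crystalline $2$-skeleton sheaf in the sense of Esnault--Kerz: for any pair of curves $C_1, C_2$ through a common closed point $x$, the fibers of $\calE'_{C_1}$ and $\calE'_{C_2}$ at $x$ must agree as Frobenius modules, since both realize $P(\calE_x, T)$ and the uniqueness of Theorem~\ref{T:chebotarev} forces the match. Second, I would use the Lefschetz slicing principle (Lemma~\ref{L:slicing}) to guarantee enough curves in $X$ to cover every closed point by an irreducible restriction. Third, to globalize, I would pass via semistable reduction (Proposition~\ref{P:alter to tame}) to a good compactification of a suitable alteration of $X$, on which each $\calE'_C$ extends with log-singularities; this uniformizes the boundary behavior enough to hope for a genuine global assembly, with a subsequent descent back to $X$ once the alteration-level object is produced.

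The central obstacle is the gluing itself. For $\ell' \neq p$, Drinfeld (following Wiesend) could assemble curve-wise lisse sheaves by controlling residual representations and invoking Hilbert irreducibility to reconstruct a global fundamental-group action. For $\ell' = p$, no such Galois-theoretic presentation of overconvergent $F$-isocrystals is available outside the unit-root case, so one must work directly with connections on strict neighborhoods of the tube of $X$ in a formal compactification. A natural two-step route is to first construct a \emph{convergent} $F$-isocrystal on $X$ by formal descent from the alteration where the object lives as a module with integrable connection, and then upgrade to overconvergence by comparing with the curve-wise data via a Shiho-style extension theorem.

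Executing this requires precisely the auxiliary foundational results flagged as forthcoming: restriction and descent properties of $F$-isocrystals along families of curves, extension across codimension-two loci, and fine control of the Newton stratification. The hardest single step will be the overconvergence upgrade, since convergent gluing on a formal model is comparatively standard but overconvergence is a transcendental condition on radii of convergence that cannot be certified from closed-point data alone; this is where Drinfeld's trick of trivializing wild monodromy by a single residual cover has no crystalline analogue and must be replaced by a more substantial argument combining semistable reduction with a Tannakian or geometric reconstruction of the missing transcendental information.
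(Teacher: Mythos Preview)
The statement you have chosen is a \emph{conjecture} in the paper, not a theorem, and the paper does not prove it in full. What the paper does establish is the case $\ell' \neq p$ (Theorem~\ref{T:Weil companion} and Corollary~\ref{C:companion ell}, by adapting Drinfeld's method) and the case $\dim(X)=1$ (Corollary~\ref{C:algebraic companion}). The remaining case $\ell'=p$ with $\dim(X)>1$ is explicitly left open and deferred to a sequel; the paper only records reductions toward it (\S\ref{sec:reductions}), showing for instance that it suffices to treat a docile \'etale object on a variety with a good compactification, over an open dense subscheme.

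Your proposal targets exactly this deferred case. What you have written is not a proof but a strategic outline, and you are candid about this: you isolate the gluing step as the missing ingredient and note that Drinfeld's residual-representation trick has no crystalline analogue. Your proposed two-step route (build a convergent $F$-isocrystal first, then upgrade to overconvergence) is a plausible architecture and is broadly consonant with the reductions the paper prepares, but the substantive content---how to certify overconvergence from curve-wise data, or how to perform formal descent of connections from an alteration---is absent from both your sketch and the present paper. So there is nothing to compare: the paper has no proof of this statement, and your proposal is, by your own accounting, a plan awaiting the foundational inputs that the paper advertises as forthcoming.
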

In light of Theorems~\ref{T:one-dim case} and~\ref{T:deligne conj2},
only the case $\ell' = p$, $\dim(X) > 1$ remains open; it is moreover harmless to also assume $\ell \neq p$.
We resolve this conjecture in a sequel paper \cite{kedlaya-companions2}.

In the opposite direction, we point out that this paper itself has a ``companion'' in the form of the expository article \cite{kedlaya-isocrystals}; that paper presents the basic properties of $p$-adic cohomology that will be used herein. Our general approach to citations in this paper is to refer to \cite{kedlaya-isocrystals}
rather than original sources; moreover, some notation and terminology from \cite{kedlaya-isocrystals} will be used  with limited explanation.
For these reasons (among others), it is strongly recommended to read \cite{kedlaya-isocrystals} before proceeding with this paper.

\subsection*{Acknowledgments}
This article is a byproduct of an invitation to the geometric Langlands seminar at the University of Chicago during spring 2016. Thanks to Tomoyuki Abe, Emiliano Ambrosi, Marco D'Addezio, Chris Davis, Valentina Di Proietto, Vladimir Drinfeld, H\'el\`ene Esnault, Ambrus P\'al, Atsushi Shiho, and Nobuo Tsuzuki for feedback.

\section{Coefficient objects}

We begin by resetting some basic notation and terminology from the introduction, then gathering some basic facts about coefficient objects in Weil cohomologies.

\begin{notation}
Throughout this paper,
let $k$ be a finite field of characteristic $p>0$;
let $\ell$ denote an arbitrary prime \emph{different from $p$};
let $K$ denote the fraction field of the ring $W(k)$ of $p$-typical Witt vectors with coefficients in $k$;
and let $X$ denote a smooth scheme of finite type over $k$. 
Let $X^\circ$ denote the set of closed points of $X$.
For $x \in X^\circ$, let $\kappa(x)$ denote the residue field of $x$ and let $q(x)$ denote the cardinality of $\kappa(x)$.

By a \emph{curve} over $k$, we mean a scheme which is smooth of relative dimension 1 and geometrically irreducible, but not necessarily proper, over $k$.
A \emph{curve in $X$} is a locally closed subscheme of $X$ which is a curve over $k$.

For $n$ a positive integer, let $k_n$ be the unique extension of $k$ of degree $n$
within a fixed algebraic closure of $k$
and put $X_n := X \times_k k_n$.
\end{notation}

\subsection{\'Etale and crystalline companions}
\label{sec:companions}

We first introduce the relevant Weil cohomology theories and their coefficient objects. For crystalline coefficients, see \cite{kedlaya-isocrystals} for a more thorough background discussion.

\begin{notation}
For $L$ an algebraic extension of $\QQ_\ell$, let $\Weil(X)$ (resp.\ $\Weil(X) \otimes L$) denote the category of lisse Weil $\QQ_\ell$-sheaves (resp.\ lisse Weil $L$-sheaves) on $X$. The concrete definition of this category is not immediately relevant, so we postpone recalling it until Definition~\ref{D:lisse weil}.

Let $\FIsoc^\dagger(X)$
denote the category of overconvergent $F$-isocrystals over $X$.
For $L$ an algebraic extension of $\QQ_p$, let $\FIsoc^\dagger(X) \otimes L$
denote the category of overconvergent $F$-isocrystals over $X$ with coefficients in $L$, in the sense of \cite[Definition~9.2]{kedlaya-isocrystals}.

By a \emph{coefficient object} on $X$, we will mean an object of one of the categories $\Weil(X) \otimes \overline{\QQ}_\ell$ or $\FIsoc^\dagger(X) \otimes \overline{\QQ}_p$. We will use this definition frequently to make uniform statements; when it becomes necessary to separate these statements into their two constituents, we will distinguish these as, respectively, the \emph{\'etale case} and the \emph{crystalline case}. We also refer to
$\ell$ (resp.\ $p$) as the \emph{coefficient prime}, to $\QQ_\ell$ (resp. $\QQ_p$) as the \emph{base coefficient field}, and to $\overline{\QQ}_\ell$ (resp.\ $\overline{\QQ}_p$) as the \emph{full coefficient field}.
Note that the definition of a coefficient object is functorial in $X$ alone, without its structure morphism to $\Spec(k)$; this means we can generally assume that $X$ is geometrically irreducible in what follows.

We say that a coefficient object on $X$ is \emph{constant} if it is the pullback of a coefficient object on $\Spec(k)$. This definition also does not depend on $k$. The operation of tensoring with a constant object of rank $1$ will be referred to as a \emph{constant twist}.
\end{notation}

\begin{lemma} \label{L:restriction functor}
Let $U$ be an open dense subscheme of $X$. Then the restriction functors from coefficient objects on $X$ to coefficient objects on $U$ 
are fully faithful and preserve irreducibility; moreover, they are equivalences whenever
$X\setminus U$ has codimension at least $2$ in each component of $X$.
\end{lemma}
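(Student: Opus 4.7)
The plan is to treat the étale and crystalline cases in parallel, and in each case to reduce the three assertions to standard inputs at the level of fundamental groups (the ordinary étale fundamental group in the first case, a Tannakian analogue in the second).

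For full faithfulness, in the étale case I identify lisse Weil $\overline{\QQ}_\ell$-sheaves on $X$ with continuous finite-dimensional representations of the Weil group $W(X)$ (per the definition to be given in Definition~\ref{D:lisse weil}); after reducing to $X$ connected, full faithfulness becomes equivalent to surjectivity of the restriction map $W(U) \twoheadrightarrow W(X)$, which reduces to the classical surjectivity of $\pi_1(U) \twoheadrightarrow \pi_1(X)$ for $X$ normal and $U \subseteq X$ dense open. The crystalline analogue is the full faithfulness theorem for restriction of $\FIsoc^\dagger \otimes \overline{\QQ}_p$ along a dense open immersion into a smooth scheme, which I would invoke as a black box from \cite{kedlaya-isocrystals}.

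Preservation of irreducibility then follows formally. Given an irreducible coefficient object $\calE$ on $X$ and a nonzero subobject $\calF \subseteq \calE|_U$, I would argue in the étale case that $\calF$ corresponds to a $W(U)$-stable subspace of the representation underlying $\calE$; surjectivity of $W(U) \to W(X)$ makes this subspace automatically $W(X)$-stable, so full faithfulness produces a subobject $\widetilde{\calF} \subseteq \calE$ restricting to $\calF$, and irreducibility of $\calE$ forces $\calF = \calE|_U$. In the crystalline case, the same argument runs after passing to the Tannakian description of $\FIsoc^\dagger \otimes \overline{\QQ}_p$, where full faithfulness of restriction translates into faithful flatness of the induced morphism on Tannakian fundamental groups, and hence into the preservation of subobjects under the larger group.

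For the equivalence when $X \setminus U$ has codimension at least $2$ in $X$, the étale statement follows from Zariski--Nagata purity of the branch locus, which for $X$ smooth upgrades $\pi_1(U) \twoheadrightarrow \pi_1(X)$ to an isomorphism (hence likewise $W(U) \to W(X)$), and so upgrades restriction to an equivalence. The crystalline analogue is the extension theorem for overconvergent $F$-isocrystals across a closed subset of codimension at least $2$ in a smooth scheme, again recorded in \cite{kedlaya-isocrystals}. The main obstacle is thus not any of the formal steps above but the two crystalline inputs, which are substantial theorems rather than formal consequences of full faithfulness or étale-side arguments; however, both are available in the cited literature, so the proof of the lemma amounts to assembling these inputs with the standard étale facts.
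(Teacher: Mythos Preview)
Your proposal is correct and matches the paper's own proof: both handle the \'etale case via the fundamental-group (Weil-group) interpretation together with Zariski--Nagata purity for the codimension-$2$ equivalence, and both treat the crystalline case by invoking the relevant black-box theorems from \cite{kedlaya-isocrystals}. Your write-up simply spells out more of the formal steps (in particular the Tannakian reason why full faithfulness forces preservation of irreducibility), whereas the paper compresses this into a one-line citation.
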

\begin{proof}
In the \'etale case, these assertions are straightforward consequences of the interpretation of lisse sheaves (when $X$ is connected) in terms of the geometric \'etale fundamental group,
plus Zariski--Nagata purity \cite[Tag~0BMB]{stacks-project} in the case of the final assertion.
In the crystalline case, more difficult arguments is required; see \cite[Theorems~5.1, 5.3, 5.11]{kedlaya-isocrystals} for attributions.
\end{proof}

\begin{lemma} \label{L:decompose to finite order}
Assume that $X$ is connected.
For every coefficient object $\calE$ on $X$,
there exists a constant twist of $\calE$ whose determinant is of finite order.
\end{lemma}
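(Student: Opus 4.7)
Setting $r = \rank(\calE)$, the identity $\det(\calE \otimes \calF) = \det(\calE) \otimes \calF^{\otimes r}$, combined with the divisibility of the parameter group of constant rank-$1$ objects ($\overline{\QQ}_\ell^\times$ or $\overline{\QQ}_p^\times$), reduces the lemma to the following structural claim: for any rank-$1$ coefficient object $\calL$ on $X$ (which we may assume geometrically irreducible), there exists a constant rank-$1$ object $\calF$ such that $\calL \otimes \calF$ has finite order. Equivalently, every rank-$1$ coefficient object factors as a constant object tensored with a finite-order object.

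In the \'etale case, view $\calL$ as a continuous character $\chi \colon W(X) \to \overline{\QQ}_\ell^\times$. The crucial point is that $\chi|_{\pi_1(X_{\overline{k}})}$ has finite image: being Frobenius-invariant in order to extend to $W(X)$, any infinite-order occurrence would (after the $\ell$-adic logarithm) yield a nonzero Frobenius-invariant class in $H^1(X_{\overline{k}}, \QQ_\ell) \otimes \overline{\QQ}_\ell$, contradicting Deligne's weight theorem (weights on $H^1$ are $\geq 1$). Granting this, let $N$ be the order; then $\chi^N$ is trivial on $\pi_1(X_{\overline{k}})$ and descends to a character of $W(k) = \ZZ$, i.e., a constant character. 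Using divisibility of $\overline{\QQ}_\ell^\times$, choose a constant character $\chi_0$ with $\chi_0^N = \chi^N$; then $\chi \chi_0^{-1}$ has order dividing $N$, and $\chi = \chi_0 \cdot (\chi \chi_0^{-1})$ is the required decomposition.

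In the crystalline case, the same argument applies after a preparatory reduction to the unit-root setting. A rank-$1$ overconvergent $F$-isocrystal has a single rational Newton slope $s$, constant on connected $X$; twisting by a constant $F$-isocrystal on $\Spec(k)$ of slope $-s$ (available because the $p$-adic valuation $\overline{\QQ}_p^\times \to \QQ$ is surjective) reduces to the case $s = 0$. Tsuzuki's theorem then identifies $\calL$ with a continuous character of the \'etale (Weil) fundamental group of $X$ on a $\overline{\QQ}_p$-line, and the \'etale argument transports verbatim, with crystalline purity playing the role of its \'etale counterpart.

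The main obstacle is the finiteness of the geometric monodromy image for a rank-$1$ coefficient object; this is not formal, relying on Deligne's purity theorem (in its \'etale or crystalline form). The crystalline case additionally requires the slope-adjusting constant twist and Tsuzuki's equivalence for unit-root objects, both of which are standard tools recalled in \cite{kedlaya-isocrystals}.
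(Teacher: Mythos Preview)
Your argument is correct and is essentially an unpacking of the references the paper cites. The paper's own proof reduces to the rank-$1$ case exactly as you do and then simply invokes \cite[1.2]{deligne-finite} (\'etale) and \cite[Lemma~9.14]{kedlaya-isocrystals} (crystalline); the content of those references is precisely the weight-theoretic argument you sketch---Frobenius-invariance of the geometric character combined with the absence of weight-$0$ classes in $H^1(X_{\overline{k}},\overline{\QQ}_\ell)$ (resp.\ its crystalline analogue) forces the geometric image to be finite.

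Two small technical points worth tightening, though neither is a real gap. First, the $\ell$-adic logarithm is only defined on a pro-$\ell$ neighbourhood of $1$, so one should first pass to the finite-index Frobenius-stable subgroup $\chi^{-1}(1+\mathfrak{m}_L)$ (for $L$ a finite extension containing the image) before taking $\log$; the resulting class then lives in $H^1$ of a finite \'etale cover of $X_{\overline{k}}$, where the weight argument applies equally well. Second, your invocation of ``Tsuzuki's theorem'' in the crystalline case is what the paper records as Proposition~\ref{P:unit-root}; you might cite it that way for consistency. Neither point affects the validity of the argument.
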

\begin{proof}
This reduces easily to the case where $\calE$ is itself of rank 1.
For this, see \cite[1.2]{deligne-finite} in the \'etale case and 
\cite[Lemma~9.14]{kedlaya-isocrystals} in the crystalline case.
\end{proof}

\begin{defn}
For $\calE$ a coefficient object on $X$, let $H^i(X, \calE)$ denote the cohomology groups of $\calE$ (without supports); these are finite-dimensional vector spaces over the full coefficient field (see \cite[Theorem~8.4]{kedlaya-isocrystals} in the crystalline case).
\end{defn}

\begin{defn} \label{D:Frobenius at points}
For $\calE$ a coefficient object on $X$ and $x \in X^\circ$, let $\calE_x$ be the pullback of $\calE$ to $x$;
it admits a linear action of the $q(x)$-power geometric Frobenius,
whose reverse characteristic polynomial we denote by $P(\calE_x, T)$.
(See \cite[Definition~9.5]{kedlaya-isocrystals} for discussion of the crystalline case.)

We say that $\calE$ is \emph{algebraic} if for all $x \in X^\circ$, $P(\calE_x, T)$ has coefficients in the field of algebraic numbers. We say that $\calE$ is \emph{uniformly algebraic} if these coefficients all belong to a single number field, or \emph{$E$-algebraic} if this field can be taken to be $E$.
For example, if $\calE$ is of rank 1 and of finite order, then the roots of the polynomials $P(\calE_x, T)$ are all roots of unity of bounded order, so $\calE$ is uniformly algebraic.

Suppose that $\calE$ is algebraic and fix a finite place $\lambda$ of the algebraic closure of $\QQ$ in the full coefficient field. For $x \in X^\circ$, we define the \emph{normalized Newton polygon}
of $\calE$ at $x$ with respect to $\lambda$, denoted $N_x(\calE, \lambda)$, to be the boundary of the
lower convex hull of the set of points
\[
\left\{ \left( i, \frac{1}{[\kappa(x)\colon\! k]} v_\lambda(a_{i}) \right)\mid i = 0,\dots,d \right\} \subset \mathbb{R}^2.
\]
\end{defn}

\begin{defn}
Let $\calE$ and $\calF$ be two coefficient objects, possibly in different categories (and possibly for different values of $\ell$),
and fix an identification $\iota$ of the algebraic closures of $\QQ$ within the respective full coefficient fields.
We say that $\calE$ and $\calF$ are \emph{companions} (with respect to $\iota$) if for each $x \in X^\circ$, the polynomials $P(\calE_x, T)$ and $P(\calF_x, T)$ are equal to the same element of $\overline{\QQ}[T]$; in particular, this implies that $\calE$ and $\calF$ are both algebraic on each connected component of $X$.
We will see later that within their respective categories, $\calE$ and $\calF$ uniquely determine each other up to
semisimplification (Theorem~\ref{T:chebotarev}).
In case $\calF$ has coefficient prime $\ell$ (resp.\ $p$), we also call it an \emph{\'etale companion}  (resp.\ a \emph{crystalline companion}) of $\calE$ (again with respect to $\iota$).
\end{defn}

\begin{defn} \label{D:L-function}
For $\calE$ a coefficient object with coefficient prime $*$, let 
\[
L(\calE, T) := \prod_{x \in X^\circ} P(\calE_x, T^{[\kappa(x)\colon\! k]})^{-1} \in \overline{\QQ}_* \llbracket T \rrbracket
\]
be its associated $L$-function. 
The Lefschetz trace formula for Frobenius (see \cite[Theorem~9.6]{kedlaya-isocrystals} in the crystalline case) asserts that
\begin{equation} \label{eq:lefschetz}
L(\calE, T) = \prod_{i=0}^{2d} \det(1 - (\#k)^{-d} F^{-1} T, H^i(X, \calE^\dual))^{(-1)^{i+1}}.
\end{equation}
(Note the dualization here, caused by working with cohomology without supports.) Consequently,
the Euler characteristic 
\[
\chi(\calE) := \sum_{i=0}^{2\dim(X)} (-1)^i \dim_{\overline{\QQ}_*} H^i(X, \calE)
\]
of $\calE$ equals the order of vanishing of $L(\calE^\dual,T)$ at $T=\infty$.
In particular, any two companions have the same $L$-function and hence the same Euler characteristic.
\end{defn}

\subsection{Slopes of isocrystals}
\label{sec:Newton poly}

We next review some crucial properties of Newton polygons in the crystalline case. For this, we must distinguish between convergent and overconvergent $F$-isocrystals; again, see \cite{kedlaya-isocrystals} for a more detailed discussion of this fundamental dichotomy.

\begin{defn}
For $L$ an algebraic extension of $\QQ_p$,
let $\FIsoc(X)$ (resp.\ $\FIsoc(X) \otimes L$) 
denote the category of convergent $F$-isocrystals on $X$ (resp. convergent $F$-isocrystals on $X$
with coefficients in $L$).
There is a restriction functor $\FIsoc^\dagger(X) \otimes L \to \FIsoc(X) \otimes L$ which is an equivalence of categories when $X$ is proper (so in particular when $X = \Spec(k)$), and
fully faithful in general (see below).
We may thus extend definitions pertaining to convergent $F$-isocrystals, such as Newton polygons,
to overconvergent $F$-isocrystals via restriction.
\end{defn}

\begin{lemma} \label{L:restriction convergent}
The restriction functor $\FIsoc^\dagger(X) \otimes L \to \FIsoc(X) \otimes L$ is fully faithful.
Moreover, for $U$ an open dense subscheme of $X$, the restriction functor $\FIsoc(X) \otimes L \to \FIsoc(U) \otimes L$
is fully faithful.
\end{lemma}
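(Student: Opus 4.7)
The plan is to reduce each full faithfulness claim to a statement about the behavior of Frobenius-invariant global sections. Both categories admit internal $\Hom$'s and dual objects, and the formation of these commutes with the restriction functors in question (this is formal from the definitions via modules with integrable connection and Frobenius structure on appropriate tubes). Consequently $\Hom(\calE, \calF)$ is identified with the Frobenius-fixed part of the global sections of $\calE^\dual \otimes \calF$, and it suffices, for every object $\calG$ of the source category, to show that the natural comparison map from Frobenius-invariant global sections of $\calG$ to those of its image is a bijection.

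For statement (1), I would work locally on an affine open of $X$ admitting a smooth lift to characteristic zero together with a Frobenius lift. An overconvergent (resp.\ convergent) $F$-isocrystal then corresponds to a finite projective module with integrable connection and compatible Frobenius structure over the weak completion $A^\dagger$ (resp.\ over the $p$-adic completion $\widehat{A}$). Injectivity of the comparison on Frobenius-invariants is immediate from $A^\dagger \hookrightarrow \widehat{A}$. Surjectivity is a Dwork-type argument: iterating the relation $F(v) = v$ for a Frobenius-fixed element $v$ in the base-changed module improves its radius of convergence under Frobenius pullback, and taking a limit of such iterates forces $v$ to lie in the $A^\dagger$-module itself.

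For statement (2), I would analogously reduce to showing, for $\calG \in \FIsoc(X) \otimes L$ and $U$ open dense in $X$, that restriction induces a bijection on Frobenius-invariant global sections. Injectivity uses that the underlying module with integrable connection on the convergent tube of $X$ has no nonzero local section supported in the tube of $X \setminus U$, which follows from smoothness and irreducibility of $X$ together with integrability of the connection. Surjectivity again rests on a Frobenius trick: a Frobenius-fixed convergent section on $U$ has a priori controlled singularities along $X \setminus U$ measured in the convergent sense, but Frobenius-invariance freezes the order of these singularities under Frobenius iteration, and the only value stable under multiplication by the local degree of Frobenius is zero, yielding an extension across the boundary.

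The principal obstacle is the surjectivity in (1), namely the statement that Frobenius-invariance upgrades convergence to overconvergence. Executing this rigorously requires precise tracking of convergence radii under Frobenius pullback on an affinoid annulus, and is the technical heart of the foundational results for $F$-isocrystals; fortunately the statement is packaged in the form we need in \cite[\S 5]{kedlaya-isocrystals}, to which I would ultimately appeal.
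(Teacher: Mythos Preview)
Your proposal is correct in spirit and in fact goes further than the paper: the paper's entire proof consists of the single citation ``See \cite[Theorem~5.3]{kedlaya-isocrystals},'' whereas you have sketched the mechanism behind that reference before ultimately appealing to it yourself. Your reduction to Frobenius-invariant global sections via internal $\Hom$ is the standard move, and your description of the Dwork-style iteration (Frobenius pullback improving convergence radii) is indeed the core of the argument for statement~(1). For statement~(2) your heuristic that Frobenius-invariance pins the order of a putative singularity to zero is the right intuition, though in the literature this step is typically organized somewhat differently (e.g., via log-growth or Newton-polygon considerations rather than a direct pole-order argument). Since both you and the paper land on the same external reference for the technical content, there is no real divergence of approach; you have simply unpacked what the paper leaves as a black box.
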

\begin{proof}
See \cite[Theorem~5.3]{kedlaya-isocrystals}.
\end{proof}

\begin{defn} \label{D:Newton polygon}
For $\calE \in \FIsoc(X)$, we define the \emph{Newton polygon} $N_x(\calE)$ for $x \in X$ (not necessarily a closed point) using the Dieudonn\'e--Manin classification theorem, as in \cite[\S 3]{kedlaya-isocrystals}. We extend the definition to $\calE \in \FIsoc(X) \otimes \overline{\QQ}_p$ by renormalization. By convention, $N_x(\calE)$ is convex with left endpoint at $(0,0)$ and right endpoint at $(\rank_{\overline{\QQ}_p}(\calE), *)$; its vertices belong to $\ZZ \times [L:\QQ_p]^{-1} \ZZ$ where $L$ is a finite extension of $\QQ_p$ for which $\calE \in \FIsoc(X) \otimes L$.
We say $\calE$ is \emph{unit-root} (or \emph{\'etale}) if for all $x \in X$, $N_x(\calE)$ has all slopes equal to 0. 
\end{defn}

\begin{lemma} \label{L:same NP}
For $\calE \in \FIsoc(X) \otimes \overline{\QQ}_p$ and
$x \in X^\circ$, write $P(\calE_x, T) = \sum_{i=0}^d a_i T^i \in \overline{\QQ}_p[T]$ with $a_0 = 1$.
Then $N_x(\calE)$ equals the boundary of the
lower convex hull of the set of points
\[
\left\{ \left( i, \frac{1}{[\kappa(x)\colon\! k]} v_p(a_{i}) \right)\mid i = 0,\dots,d \right\} \subset \mathbb{R}^2.
\]
In particular, if $\calE\in \FIsoc(X)^\dagger \otimes \overline{\QQ}_p$, then $N_x(\calE) = N_x(\calE, \lambda)$ where $\lambda$ is the place defined by the embedding $\overline{\QQ} \hookrightarrow \overline{\QQ}_p$.
\end{lemma}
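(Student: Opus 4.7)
The plan is to reduce to the case $X = \Spec(\kappa(x))$ and then invoke the Dieudonn\'e--Manin classification to compute both sides explicitly.

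First, I would observe that the Newton polygon $N_x(\calE)$, the polynomial $P(\calE_x, T)$, and (when defined) $N_x(\calE, \lambda)$ all depend only on $\calE_x$ together with its Frobenius structure, so we may replace $X$ by $\Spec(\kappa(x))$. Choosing a finite extension $L$ of $\QQ_p$ large enough, we may assume $\calE \in \FIsoc(\Spec \kappa(x)) \otimes L$; moreover, since every quantity in sight is additive over direct sum decompositions, it suffices to verify the claim on a single simple isoclinic factor.

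Next, after base change to the fraction field of $W(\overline{\kappa(x)})$, Dieudonn\'e--Manin provides a standard basis on any simple rank-$b$ piece of slope $s = a/b$ (in lowest terms) on which the semilinear $|k|$-power Frobenius $F$ acts by $Fe_i = e_{i+1}$ for $i < b$ and $Fe_b = p^a e_1$, so that $F^b = p^a$ acts as a scalar. The linearized $q(x)$-power Frobenius $F^{[\kappa(x) \colon k]}$ on this piece then has all eigenvalues $\alpha$ satisfying $\alpha^b = p^{a[\kappa(x) \colon k]}$ up to a $b$-th root of unity, so $v_p(\alpha) = s \cdot [\kappa(x) \colon k]$ for every such eigenvalue.

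This identifies the contribution of each simple isoclinic piece to $P(\calE_x, T)$ as a factor whose reverse characteristic polynomial has Newton polygon a single segment of slope $s \cdot [\kappa(x) \colon k]$ and length $b$. Combining over all isoclinic pieces and invoking the standard concatenation-of-Newton-polygons property under polynomial multiplication, the lower convex hull of $\{(i, v_p(a_i))\}$ has slope sequence $\{s \cdot [\kappa(x) \colon k]\}_s$ with the correct multiplicities; dividing the $y$-coordinates by $[\kappa(x) \colon k]$ recovers $N_x(\calE)$ by definition. The ``in particular'' clause is then immediate: under the hypothesis that $\calE$ is algebraic, $v_\lambda$ on $\overline{\QQ}$ is by construction the restriction of $v_p$ along the fixed embedding into $\overline{\QQ}_p$, so $N_x(\calE, \lambda)$ is computed by exactly the same formula.

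The only delicate point in the above is bookkeeping of normalizations: one must check that the convention fixing $N_x(\calE)$ from Definition~\ref{D:Newton polygon} is the one making slope $s = a/b$ correspond to $F^b = p^a$ on a simple rank-$b$ piece (rather than, e.g., $F^b = q^a$ for $q = |k|$), since this is what produces the factor $[\kappa(x) \colon k]$ rather than $[\kappa(x) \colon \FF_p]$ in the denominator. Once this is pinned down, no further input beyond Dieudonn\'e--Manin is required.
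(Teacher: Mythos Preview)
Your proposal is correct and follows essentially the same approach as the paper: reduce to $X = \{x\}$, invoke Dieudonn\'e--Manin, and verify the formula on each isoclinic piece. The only difference is cosmetic: the paper twists each isoclinic piece to slope $0$ and appeals to the existence of a Frobenius-stable $W(k)$-lattice in the unit-root case, whereas you compute directly on the standard basis of a simple piece of arbitrary slope; both arguments yield the same conclusion with the same input, and your flagged normalization check is exactly the point where the paper's twist-to-unit-root reduction does its work.
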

\begin{proof}
We may assume at once that $X = \{x\}$.
Using the Dieudonn\'e--Manin decomposition, we may further reduce to the case where $\calE$ is unit-root;
we may then also assume that $L = \QQ_p$.
In this case, an object of $\FIsoc(X)$
is a finite-dimensional $K$-vector space $V$ equipped with an isomorphism $F: \sigma^* V \to V$,
and the unit-root condition corresponds to the existence of a $W(k)$-lattice $T$ in $V$ such that $F$ induces an isomorphism $\sigma^* T \cong T$. Using such a lattice to compute $P(\calE_x,T)$, we see that the latter has Newton polygon with all slopes equal to 0; this proves the claim.
\end{proof}

\begin{prop} \label{P:unit-root}
Assume that $X$ is connected and fix a geometric point $\overline{x}$ of $X$.
For each finite extension $L$ of $\QQ_p$, the following statements hold.
\begin{enumerate}
\item[(a)]
The category of unit-root objects in $\FIsoc(X) \otimes L$ is equivalent to the category of
continuous representations of $\pi_1(X, \overline{x})$ on finite-dimensional $L$-vector spaces.
\item[(b)]
Under the equivalence in (a), the unit-root objects in $\FIsoc^\dagger(X) \otimes L$ correspond to representations which are \emph{potentially unramified}. This condition means that after passing from $X$ to some connected finite \'etale cover $X'$ through which $\overline{x} \to X$ factors, the restriction from $\pi_1(X', \overline{x})$ to the inertia group of any divisorial valuation is the trivial representation.
\end{enumerate}
\end{prop}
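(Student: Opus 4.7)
My plan is to obtain both parts as consequences of foundational equivalences in the theory of $F$-isocrystals, both of which are surveyed in \cite{kedlaya-isocrystals} and which I would cite rather than reproduce in full.

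For part (a), I would invoke the classification of unit-root $F$-isocrystals due to Katz and Crew. The functor from unit-root objects to continuous representations sends $\calE \in \FIsoc(X) \otimes L$ to its fiber $\calE_{\overline{x}}$, which is naturally a finite-dimensional $L$-vector space equipped with a continuous action of $\pi_1(X, \overline{x})$ coming from descent data on the crystalline site. To see this is an equivalence, one uses that any unit-root isocrystal admits, locally, an $F$-stable $W(k) \otimes_{\ZZ_p} \calO_L$-lattice (the hypothesis that $L/\QQ_p$ is finite is crucial here to ensure the existence of such integral structures), whose sheaf of horizontal Frobenius-fixed sections on the small \'etale site is a lisse \'etale $\calO_L$-sheaf with generic fiber $\calE_{\overline{x}}$. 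In the reverse direction, a continuous representation of $\pi_1(X, \overline{x})$ on an $L$-vector space factors through compatible systems of finite quotients corresponding to finite \'etale covers $X_n \to X$; \'etale descent along these covers produces the associated unit-root isocrystal from trivial data. Full faithfulness follows from the computation of internal $\Hom$ in both categories as horizontal Frobenius-fixed sections of an internal Hom sheaf.

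For part (b), the ``if'' direction is a descent construction: given a representation which becomes unramified on a finite \'etale cover $X' \to X$ and is trivial on divisorial inertia there, one extends the unit-root convergent isocrystal produced by (a) on $X'$ across a good compactification of $X'$ (the unramifiedness condition allows the associated \'etale sheaf to extend by Zariski--Nagata purity, and then (a) applied to the compactification, together with Lemma~\ref{L:restriction convergent} and the fact that overconvergence and convergence agree on proper schemes, yields an overconvergent extension); one then descends along the finite \'etale morphism $X' \to X$, using that overconvergence is preserved under \'etale descent. The ``only if'' direction is the substantive content and is Tsuzuki's theorem: the representation of $\pi_1(X, \overline{x})$ attached by (a) to a unit-root overconvergent $F$-isocrystal is potentially unramified at every divisorial valuation. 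I expect this to be the main obstacle, as it requires genuinely $p$-adic analytic input (namely Tsuzuki's local monodromy theorem for unit-root overconvergent $F$-isocrystals) which does not follow formally from the equivalence in (a) or from manipulations available in \'etale cohomology. With these two black-box equivalences in hand, the proposition is immediate.
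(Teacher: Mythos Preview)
Your proposal is correct and takes essentially the same approach as the paper: both treat this proposition as a black box from the foundational literature on $F$-isocrystals, citing \cite{kedlaya-isocrystals} (the paper's entire proof is the single line ``See \cite[Theorem~9.4]{kedlaya-isocrystals}''). Your attributions (Katz--Crew for (a), Tsuzuki for the nontrivial direction of (b)) are accurate; the only quibble is that your sketch of the ``if'' direction in (b) invokes a good compactification of $X'$, which need not exist in characteristic $p$ without an alteration---but since you are citing rather than reproducing the argument, this does not affect the validity of your proof.
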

\begin{proof}
See \cite[Theorem~9.4]{kedlaya-isocrystals}.
\end{proof}

\begin{cor}\label{C:finite order companion}
Every coefficient object on $X$ which is algebraic of rank $1$ admits companions in all categories.
\end{cor}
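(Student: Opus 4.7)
The plan is to reduce, via Lemma~\ref{L:decompose to finite order}, to two essentially trivial cases: a constant rank-$1$ object, handled by transporting a single Frobenius eigenvalue across the fixed identification of algebraic closures of $\QQ$; and a rank-$1$ object of finite order, handled by identifying each side with a finite-order character of $\pi_1(X, \overline{x})$ via Proposition~\ref{P:unit-root}.

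We may assume $X$ is geometrically connected. Since $\calE$ has rank $1$, we have $\det \calE = \calE$, so Lemma~\ref{L:decompose to finite order} produces a constant rank-$1$ coefficient object $\calG$ on $X$ for which $\calE_0 := \calE \otimes \calG$ has finite order; then $\calE \cong \calE_0 \otimes \calG^{\dual}$. Because Frobenius characteristic polynomials of rank-$1$ objects multiply under tensor product, a companion of $\calE$ can be built as the tensor product of companions of $\calE_0$ and $\calG^{\dual}$, so it suffices to treat the two cases separately. For the constant case, a constant rank-$1$ coefficient object is pulled back from $\Spec(k)$ and amounts to a single Frobenius eigenvalue $\alpha$ in the full coefficient field; algebraicity forces $\alpha \in \overline{\QQ}^\times$, and its image under the fixed identification of algebraic closures defines a constant rank-$1$ companion on $\Spec(k)$ that pulls back to the required companion on $X$.

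For the finite-order case, all Frobenius eigenvalues of $\calE_0$ are roots of unity, hence of trivial $p$-adic valuation; in the crystalline case Lemma~\ref{L:same NP} then forces $\calE_0$ to be unit-root, and Proposition~\ref{P:unit-root} identifies $\calE_0$ with a continuous character $\chi_0 \colon \pi_1(X, \overline{x}) \to \overline{\QQ}_p^\times$ of finite order. On the \'etale side, a rank-$1$ finite-order lisse Weil sheaf is tautologically a continuous finite-order character of the Weil group $W(X)$; since such a character has finite discrete image, its kernel is open in $W(X)$, and by density of $W(X)$ in $\pi_1(X, \overline{x})$ the character extends uniquely to a finite-order character $\chi_0 \colon \pi_1(X, \overline{x}) \to \overline{\QQ}_\ell^\times$. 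Either way $\chi_0$ takes values in $\mu_\infty \subset \overline{\QQ}^\times$, so the fixed identification of $\overline{\QQ}$ inside the two full coefficient fields transports $\chi_0$ to a finite-order character of $\pi_1(X, \overline{x})$ valued in the target coefficient field. This transported character is potentially trivial (trivialized by the finite \'etale cover cutting out its kernel), hence potentially unramified; by Proposition~\ref{P:unit-root}(b) on the crystalline side, or by restriction to $W(X)$ on the \'etale side, it produces a rank-$1$ coefficient object in the target category whose Frobenius value at every $x \in X^\circ$ is $\chi_0(\mathrm{Frob}_x) \in \mu_\infty$. This matches $P(\calE_{0,x},T)$ by construction, yielding the desired companion.

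The only step with any subtlety is the passage between characters of $W(X)$ and those of $\pi_1(X, \overline{x})$ on the \'etale side, but this is immediate from density of the Weil group combined with the openness of the kernel of a finite-order character; the remainder of the argument is essentially mechanical.
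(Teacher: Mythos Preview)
Your proof is correct and follows essentially the same route as the paper: reduce via Lemma~\ref{L:decompose to finite order} to the finite-order case, then identify a finite-order rank-$1$ object with a continuous character $\pi_1(X,\overline{x}) \to \mu_\infty$ (using Proposition~\ref{P:unit-root} on the crystalline side) and transport it across categories. The paper compresses this into two sentences, absorbing your explicit treatment of the constant case into the phrase ``we are free to check the claim after a constant twist''; your version simply unpacks the same argument with more care about why each side yields a $\pi_1$-character and why the constant twist itself has a companion.
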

\begin{proof}
We are free to check the claim after a constant twist, so by Lemma~\ref{L:decompose to finite order} we may ensure that $\calE$ is of finite order. Fix a geometric point $\overline{x}$ of $X$. By definition (in the \'etale case) or Proposition~\ref{P:unit-root} (in the crystalline case), $\calE$ corresponds to a continuous homomorphism $\rho: \pi_1(X, \overline{x}) \to \mu_{\infty}$ for the discrete topology on the target; we may then reverse the logic in any other category.
\end{proof}

\begin{remark} \label{R:unit-root truncated}
Proposition~\ref{P:unit-root} also has closely related integral and truncated versions. For instance,
the category of finite projective $\calO_X$-modules equipped with isomorphisms with their $\varphi_k$-pullbacks is isomorphic to the category of continuous representations of $\pi_1(X)$ on finite-dimensional $k$-vector spaces \cite[Proposition 1.1]{sga7}.
\end{remark}

\begin{prop} \label{P:specialization}
For $\calE \in \FIsoc(X) \otimes \overline{\QQ}_p$, the function $x \mapsto N_x(\calE)$ is upper semicontinuous
and the right endpoint of $N_x(\calE)$ is locally constant on $X$.
In particular, if $X$ is a curve over $k$ with generic point $\eta$, then there are only finitely many $x \in X^\circ$ for which $N_x(\calE) \neq N_\eta(\calE)$.
\end{prop}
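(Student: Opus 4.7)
The plan is to separate the proposition into three ingredients: (a) upper semicontinuity of $x \mapsto N_x(\calE)$ on $X$, (b) local constancy of the right endpoint of $N_x(\calE)$, and (c) the corollary for curves, which will follow formally from (a) and (b). We may reduce at the outset to the case where $\calE \in \FIsoc(X) \otimes L$ for some finite extension $L$ of $\QQ_p$, since the Newton polygon is insensitive to further extension of scalars.

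For (a), I would invoke the classical Grothendieck--Katz specialization theorem for $F$-crystals. Upper semicontinuity is Zariski local on $X$, so we may choose a Frobenius-stable $W(\calO_X)$-lattice in $\calE$ and work with an honest $F$-crystal $M$. The core of the argument is that, for each integer $k$ between $0$ and $\rank(\calE)$, the height of the $k$-th vertex of the Newton polygon at $x$ can be recovered asymptotically from the height of the $k$-th vertex of the Hodge polygon of $F^n$ acting on $\wedge^k M$ at $x$, via Mazur's inequality; Hodge polygons in turn are controlled by ranks of $F$-matrices on fibers of a coherent sheaf, and these ranks are upper semicontinuous in $x$ in the usual sense. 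I expect this to be the main obstacle, but I would cite it from \cite{kedlaya-isocrystals} rather than reprove it.

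For (b), note that the $y$-coordinate of the right endpoint of $N_x(\calE)$ equals the sum of all slopes of $\calE$ at $x$, which coincides with the unique Newton slope of the line isocrystal $\det(\calE)$ at $x$. It therefore suffices to show that a rank-one convergent $F$-isocrystal $\calL$ on $X$ has locally constant slope. On an affine open $\Spec R \subset X$, trivialize the underlying module of $\calL$ so that Frobenius acts by multiplication by a unit $u$ in $\widehat R[1/p]^\times$, where $\widehat R$ is the $p$-adic completion of a smooth $W(k)$-lift of $R$. Factor $u = p^N v$ with $v$ a unit in $\widehat R$; then $\calL$ differs from an explicit constant twist by a unit-root object, so its slope at every point of $\Spec R$ equals $N$. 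Hence the slope is locally constant on $X$.

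Finally, part (c) follows formally: (a) gives $N_x(\calE) \geq N_\eta(\calE)$ for every closed point $x$, while (b) forces both polygons to share the same rightmost $y$-coordinate. The jump locus $\{x \in X^\circ : N_x(\calE) \neq N_\eta(\calE)\}$ is therefore a proper closed subset of the one-dimensional scheme $X$, hence a finite set of closed points.
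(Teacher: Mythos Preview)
Your proposal is correct and matches the paper's approach: the paper's proof simply reduces from $\FIsoc(X)\otimes\overline{\QQ}_p$ to $\FIsoc(X)$ and then cites \cite[Theorem~3.12]{kedlaya-isocrystals} for all three assertions at once. You carry out essentially the same reduction (stopping at $\FIsoc(X)\otimes L$) and the same citation for (a), and your separate treatments of (b) via $\det(\calE)$ and of (c) as a formal consequence are valid elaborations of what the cited result already contains.
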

\begin{proof}
This reduces immediately to the case $\calE \in \FIsoc(X)$, for which see \cite[Theorem~3.12]{kedlaya-isocrystals}.
\end{proof}

\begin{prop} \label{P:slope filtration}
Choose $\calE \in \FIsoc(X) \otimes L$ for some finite extension $L$ of $\QQ_p$.
\begin{enumerate}
\item[(a)]
Suppose the point $(m,n) \in \ZZ \times \QQ$ is a vertex of $N_x(\calE)$ for each $x \in X$.
Then there exists a short exact sequence
\[
0 \to \calE_1 \to \calE \to \calE_2 \to 0
\]
in $\FIsoc(X) \otimes L$ with $\rank \calE_1 = m$ such that for each $x \in X$, $N_x(\calE_1)$
is the portion of $N_x(\calE)$ from $(0,0)$ to $(m,n)$.

\item[(b)]
Suppose that the function $x \mapsto N_x(\calE)$ is constant. Then there exists a unique filtration
\[
0 = \calE_0 \subset \cdots \subset \calE_l = \calE
\]
in $\FIsoc(X) \otimes L$ with the property that for some sequence $s_1 < \cdots < s_l$, for each $i$ the quotient $\calE_i/\calE_{i-1}$ has constant Newton polygon with all slopes equal to $s_i$.
This filtration is called the \emph{slope filtration} of $\calE$.
\end{enumerate}
\end{prop}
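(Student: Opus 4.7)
The plan is to deduce (b) from (a) by induction on the number of distinct slopes, and to prove (a) by twisting to reduce to the construction of a unit-root subobject.

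For (b), let the distinct slopes of the (constant) Newton polygon be $s_1 < \cdots < s_l$, and let $(m_1, n_1)$ be the vertex at which the slope jumps from $s_1$ to $s_2$. By hypothesis this point is a vertex of $N_x(\calE)$ for every $x \in X$, so (a) yields a short exact sequence $0 \to \calE_1 \to \calE \to \calE_2 \to 0$ with $\calE_1$ of constant Newton polygon with single slope $s_1$. Newton polygons are additive across short exact sequences (by multiplicativity of characteristic polynomials together with Lemma~\ref{L:same NP}), so $\calE_2$ has constant Newton polygon with slopes $s_2 < \cdots < s_l$; induct on $\rank \calE$. Uniqueness follows from the intrinsic characterization of $\calE_i$ as the subobject whose Newton polygon at every point consists of exactly the slopes of $N_x(\calE)$ that are $\le s_i$.

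For (a), the first step is to twist. After possibly enlarging $L$ and replacing the Frobenius $\sigma$ by a compatible power, we may arrange that the slope $n/m$ becomes an integer; tensoring with the appropriate Tate-twist rank-one object of $\FIsoc(X) \otimes L$ (corresponding to $F \mapsto p^{n/m} F$ on the trivial isocrystal) then shifts every $N_x(\calE)$ vertically so that the vertex $(m,n)$ lands at $(m,0)$. The hypothesis becomes that $(m,0)$ is a vertex of every $N_x(\calE)$, and the sought-after $\calE_1$ must be a unit-root subobject of rank $m$.

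The existence and uniqueness of this unit-root subobject is the main content. Pointwise, the Dieudonn\'e--Manin decomposition produces a slope-zero piece, and the vertex hypothesis forces its rank to equal $m$ at every point. To globalize, one works locally on $X$ with a Frobenius-stable lattice model $M$ of $\calE$ and defines $\calE_1$ as an intersection of the form $\bigcap_{n \ge 0} F^{-n}(p^{\lceil \epsilon n \rceil} M)$ for any $\epsilon$ strictly between $0$ and the smallest positive slope of $N_x(\calE)$; the vertex hypothesis makes this a locally free $F$-stable sub-lattice of constant rank $m$, which then descends to a subobject in $\FIsoc(X) \otimes L$. The main obstacle is the globalization across $X$ and the compatibility with the convergent-isocrystal structure --- this is precisely why (a) is stated for convergent (not overconvergent) $F$-isocrystals, and why the slopes are permitted to be arbitrary rationals in $[L\colon\! \QQ_p]^{-1} \ZZ$. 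For the technical verification of the gluing I would cite the slope filtration theorem for convergent $F$-isocrystals on smooth bases as recorded in \cite[\S 3]{kedlaya-isocrystals} (in particular in the discussion surrounding Theorem~3.12).
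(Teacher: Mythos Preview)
The paper does not prove this proposition at all; it simply cites \cite[Theorem~4.1, Corollary~4.2]{kedlaya-isocrystals} as a black box. So there is no ``paper's approach'' to compare your sketch against beyond that reference.

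Your deduction of (b) from (a) by induction on the number of slopes is correct and is exactly how Corollary~4.2 is obtained from Theorem~4.1 in the cited source.

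Your reduction in (a), however, has a genuine gap. Twisting so that the vertex $(m,n)$ lands at $(m,0)$ amounts to subtracting $n/m$ from every slope. The portion of $N_x(\calE)$ from $(0,0)$ to $(m,n)$ may already contain several distinct slopes (and these may vary with $x$, since the hypothesis pins down only the single vertex $(m,n)$); after your twist those slopes merely \emph{average} to zero, so the sought-after $\calE_1$ is not unit-root in general. The correct reduction is to twist by any rational $c$ strictly between the last slope before the vertex and the first slope after it, so that all slopes to the left of the vertex become nonpositive and all slopes to the right become strictly positive; one then constructs $\calE_1$ as the maximal subobject with all slopes $\le 0$ (Katz's ``\'etale part''), not as a unit-root subobject. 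Your lattice intersection is in the right spirit for that construction, but note that the parameter $\epsilon$ must be chosen using the slope gap at the vertex, and one needs that this gap is bounded away from zero uniformly in $x$ --- which does follow from the hypothesis, since the vertex $(m,n)$ is fixed and the other vertices lie in a fixed lattice $\ZZ \times [L\colon\!\QQ_p]^{-1}\ZZ$.
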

\begin{proof}
See \cite[Theorem~4.1, Corollary~4.2]{kedlaya-isocrystals}.
\end{proof}

\begin{prop}[Drinfeld--Kedlaya, Kramer-Miller] \label{P:uniform bound on slopes}
For $X$ irreducible with generic point $\eta$ 
and $\calE \in \FIsoc^\dagger(X) \otimes \overline{\QQ}_p$ indecomposable, no two consecutive slopes of $N_\eta(\calE)$ differ by more than $1$.
\end{prop}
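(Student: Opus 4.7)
The plan is to reach a contradiction by using the slope filtration (Proposition~\ref{P:slope filtration}) to produce a direct-sum decomposition of $\calE$. The hypothesis that some consecutive slope gap of $N_\eta(\calE)$ exceeds $1$ will enter twice: to lift the convergent slope filtration to the overconvergent category, and to split the resulting extension.

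\textbf{Reduction to constant Newton polygon.} By Proposition~\ref{P:specialization}, the locus $U \subset X$ on which $N_x(\calE) = N_\eta(\calE)$ is open and dense. The restriction functor $\FIsoc^\dagger(X) \otimes \overline{\QQ}_p \to \FIsoc^\dagger(U) \otimes \overline{\QQ}_p$ is fully faithful (Lemma~\ref{L:restriction functor}); in particular it identifies endomorphism rings, and thus both preserves and reflects indecomposability. We may therefore replace $X$ by $U$ and assume $N_x(\calE)$ is identically $N_\eta(\calE)$. Suppose for contradiction that consecutive slopes $s_i < s_{i+1}$ of $N_\eta(\calE)$ satisfy $s_{i+1} - s_i > 1$. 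The associated vertex of $N_\eta(\calE)$ is then a vertex of $N_x(\calE)$ for every $x \in X$, so Proposition~\ref{P:slope filtration}(a) applied in $\FIsoc(X) \otimes L$, for a sufficiently large finite extension $L/\QQ_p$, produces an exact sequence of \emph{convergent} $F$-isocrystals
\[
0 \to \calE_1 \to \calE \to \calE_2 \to 0
\]
with $\calE_1$ having all slopes $\leq s_i$ and $\calE_2$ having all slopes $\geq s_{i+1}$.

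\textbf{Overconvergence and splitting.} Two things remain. First, one must show the above sequence is the restriction of a sequence in $\FIsoc^\dagger(X) \otimes L$. After pulling back along an alteration provided by the semistable reduction theorem for overconvergent $F$-isocrystals (cf.\ the discussion preceding Proposition~\ref{P:alter to tame}), this reduces to a local problem at each component of the boundary divisor of a good compactification: the $\varphi$-module slope filtration of the associated $(\varphi,\nabla)$-module over the Robba ring must be stable under $\nabla$. This is exactly the content of the slope filtration theorem for $(\varphi,\nabla)$-modules over the Robba ring. Second, one must split the resulting overconvergent extension; equivalently, show that the class in $\Ext^1_{\FIsoc^\dagger(X) \otimes L}(\calE_2,\calE_1) = H^1(X, \calE_1 \otimes \calE_2^\dual)$ vanishes. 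The internal Hom $\calE_1 \otimes \calE_2^\dual$ has pointwise slopes in $(-\infty, s_i - s_{i+1}]$, hence strictly less than $-1$ everywhere. A Frobenius-slope analysis of $H^1$, via the Lefschetz trace formula (Definition~\ref{D:L-function}) and purity, forces this $H^1$ to vanish under such a slope bound. The splitting $\calE \cong \calE_1 \oplus \calE_2$ then contradicts indecomposability.

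\textbf{Main obstacle.} The substance of the argument lies entirely in the two quantitative consequences of slope gap~$> 1$ invoked in the last step: $\nabla$-stability of the $\varphi$-slope filtration (to get overconvergence of $\calE_1$) and vanishing of $\Ext^1$ between the slope pieces (to get splitting). The first is the original crux of Kedlaya's Robba-ring slope filtration theorem. The second is the precise mechanism by which the Drinfeld--Kedlaya argument, originally phrased for irreducible objects (where merely producing a nontrivial subobject suffices), is upgraded by Kramer-Miller to handle the indecomposable case, where one genuinely needs a direct summand rather than just a sub-isocrystal.
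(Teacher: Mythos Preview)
The paper's proof is a one-line reduction: by Lemma~\ref{L:restriction convergent}, the restriction functor $\FIsoc^\dagger(X) \otimes \overline{\QQ}_p \to \FIsoc(X) \otimes \overline{\QQ}_p$ is fully faithful, so it preserves endomorphism rings and hence indecomposability; the statement then follows from the corresponding result for \emph{convergent} $F$-isocrystals, which is \cite[Theorem~1.1.5]{drinfeld-kedlaya}. You never invoke this full faithfulness, and as a result you are forced to lift the convergent slope filtration back to the overconvergent category and split it there. That detour is both unnecessary and the source of the real gaps in your argument.

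Specifically, your ``overconvergence'' step is not justified. Pulling back along an alteration $f:X' \to X$ and checking that $f^*\calE_1$ is overconvergent does not obviously yield that $\calE_1$ is overconvergent on $X$: descent for overconvergent subobjects along alterations is not a standard fact, and you give no mechanism for it. (The Robba-ring slope filtration theorem you cite gives $\nabla$-stability at boundary points of a good compactification of $X'$, not of $X$.) Your $\Ext^1$ vanishing is also only a slogan: ``Lefschetz trace formula and purity'' control weights, not Newton slopes, and the trace formula only sees an alternating sum of cohomology groups. The actual Drinfeld--Kedlaya argument for the splitting takes place entirely in $\FIsoc(X)$ and uses a direct slope analysis of extensions of convergent $F$-isocrystals, which is rather different from what you sketch. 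Finally, your attribution is off: Drinfeld--Kedlaya already treat indecomposable (not merely irreducible) objects, and Kramer-Miller's contribution is an alternate proof, not an upgrade from irreducible to indecomposable.
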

\begin{proof}
By Lemma~\ref{L:restriction convergent} this reduces to the corresponding assertion in $\FIsoc(X) \otimes \overline{\QQ}_p$, for which see \cite[Theorem~1.1.5]{drinfeld-kedlaya}. An alternate proof has been communicated to the author by 
Kramer-Miller; it will appear elsewhere.
\end{proof}

\subsection{Monodromy groups}
\label{sec:mono}

We next introduce the formalism of monodromy groups,
following \cite[\S 9]{kedlaya-isocrystals} and \cite[\S 3.2]{daddezio} in the crystalline case. (The original reference in the crystalline case is \cite[\S 5]{crew-mono}, but it has somewhat restrictive hypotheses which are not suitable for our present discussion; see
\cite[Remark~9.9]{kedlaya-isocrystals}.)
\begin{hypothesis} \label{H:mono}
Throughout \S\ref{sec:mono}, 
assume that $X$ is connected, and fix a closed point $x \in X^\circ$ and a geometric point $\overline{x} = \Spec(\overline{k})$ of $X$ lying over $x$.
In the crystalline case, also choose an embedding $W(\overline{k}) \hookrightarrow \overline{\QQ}_p$.
\end{hypothesis}

\begin{defn} \label{D:mono}
On any category of coefficient objects, let $\omega_x$ be the neutral (over the full coefficient field) fiber functor defined by pullback from $X$ to $x$.
Note that in this construction, we implicitly discard the Frobenius structure: coefficient objects on $x$ correspond to vector spaces equipped with a fixed automorphism (see Definition~\ref{D:Frobenius at points}), 
and $\omega_x$ does not retain the data of the automorphism. (In the crystalline case,
this definition depends on the choice of the embedding  $W(\overline{k}) \hookrightarrow \overline{\QQ}_p$;
see \cite[Definition~9.5]{kedlaya-isocrystals}.)

For $\calE$ a coefficient object on $X$, let $G(\calE)$ denote the automorphism group of $\omega_x$ on the Tannakian category generated by $\calE$. 
The group $G(\calE)$ is the \emph{arithmetic monodromy group} associated to $\calE$.
This group turns out not to be especially useful for our purposes; compare with Definition~\ref{D:geom mono}.
\end{defn}

In the \'etale case, the construction of Definition~\ref{D:mono} has a rather concrete interpretation which we recall here.

\begin{defn} \label{D:lisse weil}
Suppose that $X$ is geometrically irreducible (by enlarging $k$ if necessary), and consider the exact sequence
\[
1 \to \pi_1^{\mathrm{et}}(X_{\overline{k}}, \overline{x}) \to \pi_1^{\mathrm{et}}(X, \overline{x}) \to 
G_k \to 1.
\]
The group $G_k$ is isomorphic to $\widehat{\ZZ}$; let us normalize this isomorphism so that geometric Frobenius corresponds to 1. We may then pull back from $\widehat{\ZZ}$ to $\ZZ$ to obtain an exact sequence
\[
1 \to \pi_1^{\mathrm{et}}(X_{\overline{k}}, \overline{x}) \to W_X \to 
\ZZ \to 1;
\]
the group $W_X$ is the \emph{Weil group} associated to $X$, and for any algebraic extension $L$ of $\QQ_\ell$,
the category $\Weil(X) \otimes L$ may be identified
with (or even defined as) the category of continuous representations of $W_X$ on finite-dimensional $L$-vector spaces.
(To use this as the definition when $L$ is not finite over $\QQ_\ell$, one must check that the definition commutes with colimits; for this, see \cite[Remark~9.0.7]{katz-sarnak}.)
 For any object $\calE$ in this category, $G(\calE)$ may then be naturally identified with the Zariski closure of the image of $W_X$ in the associated representation.
\end{defn}

\begin{defn} \label{D:geom mono}
Retain notation as in Definition~\ref{D:mono}. In the \'etale case, we define the \emph{geometric monodromy group}
$\overline{G}(\calE)$ to be the Zariski closure of the image of $\pi_1^{\mathrm{et}}(X_{\overline{k}}, \overline{x})$ in the associated representation. Equivalently, define the category of \emph{geometric coefficient objects} by forming the category of continuous representations of $\pi_1^{\mathrm{et}}(X_{\overline{k}}, \overline{x})$ on finite dimensional $\overline{\QQ}_\ell$-vector spaces, then extracting the Tannakian subcategory generated by coefficient objects.
We may then equivalently define $\overline{G}(\calE)$ as the automorphism group of $\omega_x$ on the Tannakian subcategory of geometric coefficient objects generated by $\calE$.

In the crystalline case, we define the category of \emph{geometric coefficient objects} by forming the category of 
overconvergent isocrystals (without Frobenius structure) with coefficients in $\overline{\QQ}_p$ (by a suitable 2-colimit over finite extensions of $\QQ_p$, as described in \cite[Definition~9.2]{kedlaya-isocrystals} in the presence of Frobenius structures) and then extracting the Tannakian subcategory generated by coefficient objects. We then define $\overline{G}(\calE)$ again as the automorphism group of $\omega_x$ on the Tannakian subcategory of geometric coefficient objects generated by $\calE$. We may then turn around and define the 
\emph{Weil group} of $\calE$ as the semidirect product of $\overline{G}(\calE)$ by $\ZZ$ via the action of Frobenius.
\end{defn}
 
\begin{remark} \label{R:crew mono}
The definition of monodromy groups given here is essentially \cite[Definition~3.2.4]{daddezio};
in the crystalline case, see also \cite[Definition~9.8]{kedlaya-isocrystals} and subsequent remarks.
In particular, as pointed out in \cite[Remark~3.2.5]{daddezio}, the definition of $\overline{G}(\calE)$ agrees with Crew's definition of the differential Galois group \cite[\S 2]{crew-mono} when $X$ is a curve and $x \in X(k)$.
\end{remark}

\begin{remark} \label{R:open dense}
For any open dense subset $U$ of $X$ containing $x$, the inclusion $\overline{G}(\calE|_U) \to \overline{G}(\calE)$ is an isomorphism. This is straightforward in the \'etale case; in the crystalline case, see \cite[Remark~9.10]{kedlaya-isocrystals}. (See also Lemma~\ref{L:ae same mono} below.)
\end{remark}

\begin{remark} \label{R:finite monodromy}
By definition, any coefficient object $\calE$ on $X$ belongs to $\Weil(X) \otimes L$ or $\FIsoc^\dagger(X) \otimes L$ for some finite extension $L$ of the base coefficient field. Choose such an $L$, then suppose that $\overline{G}(\calE)$ is finite.
In the \'etale case, it is obvious that $\calE$ gives rise to a discrete $L$-representation of $\pi_1(X_{\overline{k}}, \overline{x})$. It is a key observation of Crew that this also holds in the crystalline case;
namely, the underlying isocrystal of $\calE$ admits a canonical unit-root Frobenius structure (see \cite[Corollary~9.17]{kedlaya-isocrystals}) and then Proposition~\ref{P:unit-root} applies
(see \cite[Lemma~9.18]{kedlaya-isocrystals}).
\end{remark}

\begin{remark} \label{R:constant subobject}
As a special case of Remark~\ref{R:finite monodromy},
note that a coefficient object $\calE$ on $X$ is constant if and only if $\overline{G}(\calE)$ is trivial. In particular, this condition passes to subobjects.
\end{remark}

\begin{remark} \label{R:absolutely irreducible}
As the terminology suggests, the definition of the geometric monodromy group is invariant under base extension on $k$.
However, it is not the case that the geometric monodromy group coincides with the arithmetic monodromy group after a sufficiently large base extension. For example, if $\calE$ is constant of rank $1$ but not of finite order, then $G(\calE_{k'}) = \mathbb{G}_m$ for all finite extensions $k'$ of $k$, whereas $\overline{G}(\calE)$ is trivial.

One consequence of this is that it is not immediately obvious whether a coefficient object which is \emph{absolutely irreducible} (by which we mean it remains irreducible after any finite extension of the base field) is
\emph{geometrically irreducible} (by which we mean it is irreducible as a geometric coefficient object,
or equivalently that $\overline{G}(\calE)$ acts irreducibly on its tautological representation).
This does turn out to be true; 
moreover, for any irreducible  coefficient object $\calE$ on $X$,
there exists a positive integer $n$ such that on $X_n$,
$\calE$ splits as a direct sum of geometrically irreducible subobjects 
$\calE_1 \oplus \cdots \oplus \calE_n$ over $X_n$ which are cyclically permuted by the action of the $k$-Frobenius.
See \cite[Remark~9.13]{kedlaya-isocrystals} and references therein.
\end{remark}

In light of Remark~\ref{R:absolutely irreducible}, we may refine Lemma~\ref{L:decompose to finite order} using the following statement.

\begin{lemma} \label{L:decompose to finite order2}
Suppose that $\calE$ is irreducible and $\det(\calE)$ is of finite order. Then for any positive integer $n$, every irreducible constituent of the pullback of $\calE$ to $X_n$ has determinant of finite order.
\end{lemma}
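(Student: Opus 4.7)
The plan is to write $\calE|_{X_n} = \bigoplus_j \calF_j$ for the decomposition into irreducible constituents on $X_n$ (counted with multiplicity) and to exploit the $\Gal(k_n/k)$-action together with Lemma~\ref{L:decompose to finite order} to produce a single constant twist that makes every $\det \calF_j$ of finite order simultaneously.

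The first observation is that $\Gal(k_n/k)$ permutes the isomorphism classes $\{[\calF_j]\}$ by pullback, and that this action is transitive. Indeed, since $\calE$ is irreducible on $X$, the endomorphism space $\Hom_X(\calE,\calE)$ is one-dimensional (Schur); by Galois descent along the Galois \'etale cover $X_n \to X$, this must agree with $\Hom_{X_n}(\calE|_{X_n},\calE|_{X_n})^{\Gal(k_n/k)}$, which forces the permutation on iso classes to have a single orbit. In particular all $\calF_j$ share a common dimension $d'$.

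Next, applying Lemma~\ref{L:decompose to finite order} to $\calF_1$ produces a constant rank-$1$ object $\calN$ on $X_n$ such that $\det(\calF_1 \otimes \calN) = (\det \calF_1) \otimes \calN^{d'}$ is of finite order. The substantive point is that $\sigma^* \calN \cong \calN$ for every $\sigma \in \Gal(k_n/k)$: both are constant rank-$1$ objects on $X_n$, and for any $y \in X_n^\circ$ the Frobenius polynomial of a constant object at $y$ depends only on $[\kappa(y)\colon\! k_n]$, which is preserved by $\sigma$; thus $P((\sigma^* \calN)_y, T) = P(\calN_y, T)$ for all $y$, and since rank-$1$ objects are automatically semisimple, Theorem~\ref{T:chebotarev} yields $\sigma^* \calN \cong \calN$. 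Applying $\sigma^*$ to the finite-order conclusion for $\calF_1$ and iterating using transitivity then yields that $\det(\calF_j \otimes \calN)$ is of finite order for every $j$.

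Finally, tensoring these identities over $j$ gives
\[
\bigotimes_j \det(\calF_j \otimes \calN) \;=\; (\det \calE)|_{X_n} \otimes \calN^{\rank(\calE)},
\]
which is a product of finite-order objects. Since $(\det \calE)|_{X_n}$ is of finite order by hypothesis, so is $\calN^{\rank(\calE)}$, and hence so is $\calN$ itself; thus $\det \calF_j = \det(\calF_j \otimes \calN) \otimes \calN^{-d'}$ is of finite order for each $j$. The main obstacle in this plan is the identification $\sigma^* \calN \cong \calN$; invoking Theorem~\ref{T:chebotarev} finesses the difference between the \'etale case (where this is immediate from the Weil-group description via $W_{X_n} \trianglelefteq W_X$) and the crystalline case (where no such representation-theoretic description is directly available).
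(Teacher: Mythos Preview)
Your argument is correct and follows the same overall architecture as the paper's proof: decompose $\calE|_{X_n}$ into irreducibles permuted transitively by $\Gal(k_n/k)$ (the paper cites Remark~\ref{R:absolutely irreducible} for this rather than rederiving it), twist by a constant rank-$1$ object to make every $\det \calF_j$ of finite order simultaneously, then tensor over $j$ to force the twist itself to have finite order.

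The only substantive difference is how you establish Galois-invariance of the constant twist $\calN$. The paper avoids the issue entirely: it takes an $n$-th root of the Frobenius eigenvalue of the twist on $\Spec k_n$ to produce a rank-$1$ object $\calL$ on $\Spec k$, whose pullback to $X_n$ is then \emph{automatically} $\Gal(k_n/k)$-invariant because it comes from $X$. You instead argue $\sigma^* \calN \cong \calN$ by matching Frobenius polynomials and invoking Theorem~\ref{T:chebotarev}. This works and there is no circularity (the proof of Theorem~\ref{T:chebotarev} does not pass through this lemma), but two small points are worth noting. First, it is a forward reference: this lemma sits in \S\ref{sec:mono}, while Theorem~\ref{T:chebotarev} is proved only in \S\ref{subsec:Chebotarev density}. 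Second, Theorem~\ref{T:chebotarev} is stated for \emph{algebraic} coefficient objects, and at this stage you do not know $\calN$ is algebraic; the proof of Theorem~\ref{T:chebotarev}(b) does not actually use algebraicity when the two objects lie in the same category, so the gap is cosmetic, but you should say so explicitly. The paper's $n$-th-root trick sidesteps both issues at once and is the cleaner device here.
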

\begin{proof}
By Remark~\ref{R:absolutely irreducible},
$\calE$ splits as a direct sum of irreducible subobjects 
$\calE_1 \oplus \cdots \oplus \calE_m$ over $X_n$ which are cyclically permuted by the action of the $k$-Frobenius.
By Lemma~\ref{L:decompose to finite order}, we can choose a coefficient object $\calL'$ on $k_n$ of rank $1$
such that $\calE_1 \otimes \calL'$ has determinant of finite order. By extracting a suitable $n$-th root,
we may realize $\calL$ (not uniquely) as the pullback of a coefficient object $\calL$ on $k$ of rank 1.
The products $\calE_1 \otimes \calL, \dots, \calE_n \otimes \calL$ are then cyclically permuted by the action of the $k$-Frobenius; consequently, since the first one has finite order, so do the others. In particular, $\det(\calE \otimes \calL)$ is of finite order, as then is $\calL$, as then are $\det(\calE_1),\dots,\det(\calE_n)$.
\end{proof}

\begin{prop} \label{P:component group}
For any coefficient object $\calE$ on $X$,
there is a natural surjective continuous homomorphism $\psi_{\calE}: \pi_1^{\mathrm{et}}(X, \overline{x}) \to \pi_0(G(\calE))$,
which induces a surjective continuous homomorphism
$\overline{\psi}_{\calE}: \pi_1^{\mathrm{et}}(X_{\overline{k}}, \overline{x}) \to \pi_0(\overline{G}(\calE))$.
\end{prop}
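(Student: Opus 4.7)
The construction is Tannakian and runs in parallel in the arithmetic and geometric cases. Since $G(\calE)$ is a linear algebraic group over the full coefficient field, its identity component $G(\calE)^\circ$ is a closed normal subgroup of finite index, so $\pi_0(G(\calE)) = G(\calE)/G(\calE)^\circ$ is a finite group. By Tannakian duality, the surjection $G(\calE) \twoheadrightarrow \pi_0(G(\calE))$ corresponds to a full Tannakian subcategory $\mathcal{T} \subseteq \langle \calE \rangle$ consisting of those subquotients of tensor polynomials in $\calE$ and $\calE^\dual$ on which $G(\calE)^\circ$ acts trivially; equivalently, those $\calF$ for which $G(\calF)$ is a (finite) quotient of $\pi_0(G(\calE))$.

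\textbf{Key step.} The heart of the proof is to show that every $\calF \in \mathcal{T}$ comes from a continuous, finite-image representation of $\pi_1^{\mathrm{et}}(X, \overline{x})$ on $\omega_x(\calF)$. In the \'etale case this is direct from Definition~\ref{D:lisse weil}: $\calF$ is a continuous $W_X$-representation with finite image, and since $W_X$ is dense in $\pi_1^{\mathrm{et}}(X, \overline{x})$ while the image is finite discrete, the representation extends uniquely to the profinite completion. In the crystalline case, Remark~\ref{R:finite monodromy} provides, via Crew's canonical unit-root Frobenius structure attached to the finite-monodromy isocrystal $\calF$, a continuous $\pi_1^{\mathrm{et}}(X_{\overline{k}}, \overline{x})$-action on $\omega_x(\calF)$; since the Frobenius at any closed point $x$ realizes an element of $G(\calF)$, the finiteness of $G(\calF)$ forces this Frobenius to have finite order, and the $\pi_1^{\mathrm{et}}(X_{\overline{k}}, \overline{x})$-action glues with the Frobenius action into a continuous representation of the full $\pi_1^{\mathrm{et}}(X, \overline{x})$.

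\textbf{Assembly and surjectivity.} Applying this functor to all objects of $\mathcal{T}$, Tannakian reconstruction produces the continuous homomorphism $\psi_{\calE}: \pi_1^{\mathrm{et}}(X, \overline{x}) \to \pi_0(G(\calE))$. Surjectivity is automatic: the image Zariski-generates $\pi_0(G(\calE))$ by construction, and the target is a finite discrete group. Running the identical argument with $G(\calE)$, $\langle \calE \rangle$, and $\pi_1^{\mathrm{et}}(X, \overline{x})$ replaced by $\overline{G}(\calE)$, the geometric Tannakian category generated by $\calE$, and $\pi_1^{\mathrm{et}}(X_{\overline{k}}, \overline{x})$, yields $\overline{\psi}_{\calE}$; compatibility with $\psi_{\calE}$ along the inclusion $\overline{G}(\calE) \hookrightarrow G(\calE)$ (and the induced natural map $\pi_0(\overline{G}(\calE)) \to \pi_0(G(\calE))$) is immediate from the functoriality of Tannakian reconstruction.

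\textbf{Main obstacle.} The only genuinely subtle step is the crystalline arithmetic case, where the definition of $\omega_x$ and $G(\calE)$ explicitly discards the Frobenius structure. One must verify that the Frobenius endomorphism of $\omega_x(\calF)$ nevertheless lies in $G(\calF)$ and so has finite order whenever $G(\calF)$ does, in order to upgrade the geometric $\pi_1$-representation produced by Remark~\ref{R:finite monodromy} to an arithmetic one. Once this bookkeeping is carried out the rest of the argument is formal.
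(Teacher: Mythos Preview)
The paper does not give a self-contained proof: it simply declares the \'etale case ``apparent from Definition~\ref{D:lisse weil}'' and cites D'Addezio \cite[Proposition~3.3.4]{daddezio} for the crystalline case. Your Tannakian sketch is precisely the sort of argument that lies behind the cited reference: identify the full Tannakian subcategory corresponding to the quotient $G(\calE) \twoheadrightarrow \pi_0(G(\calE))$, show that its objects arise from genuine $\pi_1^{\mathrm{et}}$-representations (via Crew's observation recorded in Remark~\ref{R:finite monodromy} together with Proposition~\ref{P:unit-root} in the crystalline case), and conclude by Tannakian reconstruction. So your approach is correct and essentially matches what the cited literature does.

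One small point worth tightening: to invoke Remark~\ref{R:finite monodromy} for an object $\calF$ with $G(\calF)$ finite, you need to know that $\overline{G}(\calF)$ is finite as well. This follows from the natural homomorphism $\overline{G}(\calF) \to G(\calF)$ (coming from the forgetful tensor functor from $F$-isocrystals to isocrystals on the Tannakian side), but you should say so. Once that is in place, Proposition~\ref{P:unit-root}(a) already yields a representation of the \emph{full} arithmetic $\pi_1^{\mathrm{et}}(X,\overline{x})$ from the canonical unit-root structure, so the step you flag as the ``main obstacle'' --- verifying that Frobenius lies in $G(\calF)$ and has finite order --- is subsumed by checking that this unit-root Frobenius coincides (up to a constant twist) with the given one on $\calF$. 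That bookkeeping is exactly what D'Addezio carries out.
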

\begin{proof}
In the \'etale case, this is apparent from Definition~\ref{D:lisse weil}.
For the crystalline case, see \cite[Proposition~3.3.4]{daddezio}.
\end{proof}

The following is a highly nontrivial result of Grothendieck in the \'etale case and Crew in the crystalline case.
See \cite{daddezio} for numerous consequences of this statement.
\begin{prop} \label{P:global monodromy}
For any coefficient object $\calE$ on $X$, the radical of $\overline{G}(\calE)^{\circ}$ is unipotent. Consequently,
if $\calE$ is semisimple, then $\overline{G}(\calE)^\circ$ is semisimple.
\end{prop}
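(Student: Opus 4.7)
The final clause follows from the main statement: if $\calE$ is semisimple (as an arithmetic coefficient object), then Clifford's theorem applied to the normal embedding $\overline{G}(\calE) \lhd G(\calE)$ ensures the underlying representation remains semisimple upon restriction to $\overline{G}(\calE)$, so $\overline{G}(\calE)^\circ$ is reductive. In a connected reductive group, the radical is the maximal central torus, so ``radical unipotent'' forces the radical to be trivial, yielding the semisimplicity conclusion.

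For the main statement I argue by contradiction: suppose the radical $R$ of $G := \overline{G}(\calE)^\circ$ is not unipotent. Then $G/R_u$ has a nontrivial central torus, which lifts to a nontrivial central torus $T \subseteq G$. Since $G \subseteq \GL(V)$ acts faithfully on the fiber $V := \omega_x(\calE)$, so does $T$; in particular, the $T$-eigenspace decomposition $V = \bigoplus_\mu V_\mu$ contains a summand $V_\mu$ with nontrivial character $\mu \in X^*(T)$. Using Proposition~\ref{P:component group}, pull back via the finite \'etale cover $X' \to X$ corresponding to $\ker(\psi_\calE)$, so that $\overline{G}(\calE|_{X'}) = G$ becomes connected. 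Frobenius conjugation permutes the finitely many characters of $T$ appearing in $V$, so after a further base extension $k \to k_m$ of sufficient degree, Frobenius fixes $\mu$ individually. Then $V_\mu$ is preserved by $G$ (by centrality of $T$) and by Frobenius, so it corresponds to an arithmetic sub-coefficient object $\calW$ of $\calE|_{X'_m}$.

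The determinant $\det \calW$ is a rank-1 coefficient object on which $T$ acts through the character $(\dim V_\mu)\,\mu$, a nonzero element of the torsion-free lattice $X^*(T)$; hence $\det \calW$ has infinite geometric monodromy. But Lemma~\ref{L:decompose to finite order} supplies a constant twist of $\det \calW$ of finite order, and constant twists are geometrically trivial, so $\det \calW$ must in fact have finite geometric monodromy---the desired contradiction. The principal technical hurdle is the descent step just above, recognizing a Frobenius-and-$G$-stable subspace of the fiber as an arithmetic sub-coefficient object; in the \'etale case this is routine via Definition~\ref{D:lisse weil}, whereas in the crystalline case it rests on the Tannakian framework for overconvergent $F$-isocrystals originally due to Crew \cite{crew-mono} and reworked in \cite[\S 9]{kedlaya-isocrystals}.
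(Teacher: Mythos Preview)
Your overall strategy---showing that every rank-$1$ object in the geometric Tannakian category generated by $\calE$ has finite order, via Lemma~\ref{L:decompose to finite order} together with the geometric triviality of constant twists---is exactly the standard argument of Grothendieck and Crew that the paper cites rather than reproves. Your deduction of the second clause from the first is also correct.

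However, the proof of the main statement contains a genuine gap. You assert that the central torus of $G/R_u$ ``lifts to a nontrivial central torus $T \subseteq G$,'' and then rely on centrality of $T$ in $G$ to conclude that each $T$-eigenspace $V_\mu$ is $G$-stable. The lifting claim is false: if $G$ is a Borel subgroup of $\SL_2$, then $G/R_u \cong \GG_m$ is its own central torus, yet $Z(G)^\circ$ is trivial, so no nontrivial central torus exists in $G$; any torus of $G$ mapping onto $G/R_u$ is a maximal torus, which is not central, and only one of its two eigenlines in the standard representation is $G$-stable. The repair is to bypass the eigenspace construction and instead observe that if $R(G)$ is not unipotent, then $G/R_u(G)$ is connected reductive with nontrivial connected center, hence admits a surjective character $\chi\colon G \to \GG_m$. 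After your cover $X' \to X$ making $\overline{G}(\calE|_{X'}) = G$ connected, Tannakian duality realizes $\chi$ as a rank-$1$ geometric subquotient $\calL$ of some $(\calE \oplus \calE^\vee)^{\otimes n}|_{X'}$; since Frobenius permutes the finitely many characters appearing there, a further base extension promotes $\calL$ to an arithmetic rank-$1$ object, and then your closing appeal to Lemma~\ref{L:decompose to finite order} goes through unchanged.
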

\begin{proof}
See \cite[Theorem~3.4.5]{daddezio} for the first statement
(or \cite[Theorem~9.19]{kedlaya-isocrystals} for the crystalline case). The passage from the first statement to the second is purely group-theoretic; see \cite[Corollary~4.10]{crew-mono}.
\end{proof}

\begin{remark}
One can also consider geometric monodromy groups of convergent $F$-isocrystals; in fact, the relationship
between the geometric monodromy group of an overconvergent $F$-isocrystal and that of its underlying
convergent $F$-isocrystal provides much useful information. However, we will not make explicit use of this here.
See \cite{daddezio} for a modern treatment.
\end{remark}

\subsection{Tame and docile coefficients}

In much of the study of companions, a crucial role is played by coefficient objects for 
which the local monodromy at the boundary is tamely ramified and unipotent.

\begin{defn} \label{D:tame log}
Let $X \hookrightarrow \overline{X}$ be an open immersion with dense image.
Let $D$ be an irreducible divisor of $\overline{X}$ with generic point $\eta$.
Let $\calE$ be a coefficient object on $X$. 
In the \'etale case, we say that $\calE$ is \emph{tame} (resp.\ \emph{docile}) along $D$ if the action of the inertia group of $\eta$ on $\calE$ (arising from Definition~\ref{D:lisse weil}) is tamely ramified (resp.\ tamely ramified and unipotent).

In the crystalline case, we say that $\calE$ is \emph{tame} (resp.\ \emph{docile}) along $D$ if $\calE$ has
$\QQ$-unipotent monodromy in the sense of \cite[Definition~1.3]{shiho-log}
(resp. unipotent monodromy in the sense of \cite[Definition~4.4.2]{kedlaya-semi1}) along $D$.

We say that $\calE$ is \emph{tame} (resp.\ \emph{docile}) if it is so with respect to every choice of $\overline{X}$ and $D$. By definition, this property is stable under arbitrary pullbacks.
\end{defn}

\begin{remark} \label{R:tame in codim 1}
The category of tame (resp.\ docile) coefficient objects on $X$ is stable under subquotients and extensions. This is clear in the \'etale case; in the crystalline case, see \cite[Proposition~3.2.20, Proposition~6.5.1]{kedlaya-semi1}.
\end{remark}

To make this definition more useful in practice, we need the concept of a \emph{good compactification}.
\begin{defn}
A \emph{smooth pair} over $k$ is a pair $(Y,Z)$ in which $Y$ is a smooth $k$-scheme and $Z$ is a strict normal crossings 
divisor on $Y$; we refer to $Z$ as the \emph{boundary} of the pair. (Note that $Z = \emptyset$ is allowed.)
A \emph{good compactification} of $X$ is a smooth pair $(\overline{X}, Z)$ over $k$ with $\overline{X}$ projective (not just proper) over $k$, together with an isomorphism $X \cong \overline{X} \setminus Z$; we will generally treat the latter as an identification.

If $\overline{X}$ is a good compactification of $X$ with boundary $Z$, then a coefficient object on $X$ is tame (resp.\ docile) if and only if it is so with respect to each component of $Z$. Namely, this follows from Zariski--Nagata purity in the \'etale case, and from \cite[Theorem~6.4.5]{kedlaya-semi1} in the crystalline case.
\end{defn}

For general $X$, the existence of a good compactification is (to the best of our understanding) not known; it would follow from resolution of singularities in characteristic $p$. As a workaround, we use de Jong's theorem on alterations.

\begin{defn}
An \emph{alteration} of $X$ is a morphism $f: X' \to X$ which is proper, surjective, and generically finite \'etale. This corresponds to a \emph{separable alteration} in the sense of de Jong \cite{dejong-alterations}.
\end{defn}

\begin{prop}[de Jong] \label{P:alterations}
There exists an alteration $f: X' \to X$ such that $X'$ admits a good compactification. (Beware that $X'$ is not guaranteed to be geometrically irreducible over $k$.)
\end{prop}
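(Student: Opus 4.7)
The plan is to deduce the existence of a good compactification after alteration directly from de Jong's theorem on alterations \cite{dejong-alterations}, applied to a suitable compactification of $X$ together with its reduced boundary. The three steps are: (a) compactify $X$ to a projective $k$-scheme; (b) apply de Jong's alteration theorem to resolve the boundary to a strict normal crossings divisor; (c) restrict the resulting alteration to the preimage of $X$.

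For step (a), I would first invoke Nagata's compactification theorem to choose an open dense embedding $X \hookrightarrow \overline{X}_0$ with $\overline{X}_0$ proper over $k$. By Chow's lemma, after replacing $\overline{X}_0$ by a projective modification that is an isomorphism over $X$ (possible because $X$ is already smooth and open in $\overline{X}_0$), I may assume $\overline{X}_0$ is projective over $k$. Set $Z_0 := (\overline{X}_0 \setminus X)_{\mathrm{red}}$, which is a proper closed subscheme of $\overline{X}_0$.

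For step (b), apply de Jong's theorem to the pair $(\overline{X}_0, Z_0)$: this produces a separable alteration $g : \overline{X}' \to \overline{X}_0$ with $\overline{X}'$ smooth and projective over $k$, and $Z := g^{-1}(Z_0)_{\mathrm{red}}$ a strict normal crossings divisor on $\overline{X}'$. For step (c), put $X' := g^{-1}(X) = \overline{X}' \setminus Z$ and $f := g|_{X'}$. Then $(\overline{X}', Z)$ is a good compactification of $X'$, and $f$ is an alteration of $X$: properness, surjectivity, and generic finite \'etaleness all descend from $g$ to its restriction over the open dense subscheme $X \subset \overline{X}_0$.

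The main obstacle is really de Jong's theorem itself, which is being invoked as a black box; everything else is bookkeeping between Nagata compactifications, Chow's lemma, and taking preimages. The caveat that $X'$ need not be geometrically irreducible reflects the fact that de Jong's construction can introduce residue field extensions at the generic points of components of $\overline{X}'$: even if $X$ is geometrically irreducible over $k$, a component of $X'$ mapping onto $X$ can acquire a nontrivial constant field extension and therefore split upon base change to $\overline{k}$.
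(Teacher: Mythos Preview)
Your proposal is correct and follows the same route as the paper, which simply cites \cite[Theorem~4.1]{dejong-alterations} (noting that $k$ is perfect, so regular $=$ smooth). Your steps (a) are in fact already subsumed by de Jong's Theorem~4.1 as stated: that theorem takes as input a (not necessarily proper) variety $X$ and directly produces an alteration $X' \to X$ together with an open immersion $X' \hookrightarrow \overline{X}'$ into a regular projective variety with strict normal crossings boundary, so the preliminary Nagata/Chow reduction is unnecessary bookkeeping.
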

\begin{proof}
Keeping in mind that $k$ is perfect, see
\cite[Theorem~4.1]{dejong-alterations}.
\end{proof}

We will frequently make use of the fact that the tame and docile conditions can always be achieved after pullback along a suitable alteration.

\begin{prop} \label{P:alter to tame}
For any coefficient object $\calE$ on $X$, there exists an alteration $f: X' \to X$
such that $X'$ admits a good compactification and $f^* \calE$ is docile.
$($As in Proposition~\ref{P:alterations}, we cannot guarantee that $X'$ is geometrically irreducible over $k.)$
\end{prop}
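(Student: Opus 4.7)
The plan is to handle the étale and crystalline cases separately after a common preliminary step. I would first apply Proposition~\ref{P:alterations} to produce an alteration $g\colon Y\to X$ with $Y$ admitting a good compactification $(\overline{Y},Z)$ whose boundary $Z=D_1\cup\cdots\cup D_r$ has strict normal crossings. Since docility on a good compactification is checked one boundary component at a time, and since any further alteration of $Y$ can be dominated by one admitting a good compactification via a second application of Proposition~\ref{P:alterations}, it suffices to exhibit a further alteration along which the pullback of $g^*\calE$ is docile at every component of the boundary. Geometrically non-connected components of $Y$ may be handled separately.

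In the étale case, $g^*\calE$ corresponds to a continuous representation $\rho\colon W_Y\to\GL_n(\overline{\QQ}_\ell)$ with image in a compact $\ell$-adic Lie group. Such a group admits an open subgroup which is pro-$\ell$ and torsion-free. Passing to the associated finite étale cover of $Y$, every inertia subgroup at a boundary component lands in this pro-$\ell$ piece; because wild inertia is pro-$p$ with $p\neq\ell$, its image becomes trivial, and the pullback is tame. The tame inertia is topologically generated by a single element, the Jordan decomposition of whose image has semisimple part of finite order. Letting $N$, prime to $p$, be the least common multiple of these orders across all boundary components, a Kummer-type cyclic cover of the good compactification of degree $N$ ramified along $Z$ (which exists because $Z$ has strict normal crossings and $\gcd(N,p)=1$) replaces the tame generator by its $N$th power, which is unipotent. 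A final invocation of Proposition~\ref{P:alterations} restores a good compactification.

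In the crystalline case, the statement is exactly the \emph{semistable reduction theorem for overconvergent $F$-isocrystals}, proved by the author in a series of papers and independently by Shiho; a convenient summary is \cite[\S 11]{kedlaya-isocrystals}. This is the main obstacle: although the statement is formally parallel to the étale case, its proof is vastly deeper and constitutes one of the fundamental black boxes on which the rest of the paper relies. A minor subtlety on the étale side, namely that a single global alteration must simultaneously address all boundary components, is handled by globalizing the finite étale and Kummer constructions across the entire strict normal crossings divisor $Z$.
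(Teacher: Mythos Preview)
Your treatment of the crystalline case is exactly the paper's: invoke the semistable reduction theorem for overconvergent $F$-isocrystals as a black box.

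In the \'etale case your argument is close in spirit to the paper's but takes an unnecessary detour that introduces a gap. The paper does not use a Kummer cover at all. Instead, having chosen an $\calO_L$-lattice $T$ stable under the geometric fundamental group, it passes to a finite \'etale cover that trivializes the action on $T/\ell^m T$ for a suitable $m$, and then applies Proposition~\ref{P:alterations} once. The point is that after this cover the image of the representation lies in $1+\ell^m M_n(\calO_L)$, so every eigenvalue of every inertia element lies in $1+\ell^m \calO_{\overline{L}}$; combined with Grothendieck's observation that these eigenvalues are roots of unity, for $m$ large enough (bounded in terms of $p,r$ or of $k$) they are forced to equal $1$. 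Hence the pullback is docile with respect to \emph{every} compactification, and the subsequent application of de Jong is unproblematic. Your own first step (passing to an open pro-$\ell$ torsion-free subgroup) already accomplishes this once you note that such a subgroup sits inside some $1+\ell^m M_n(\calO_L)$; the Kummer cover is then redundant.

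The gap in your version is the interaction of the Kummer cover with the other steps. You perform the finite \'etale cover $Y_1\to Y$ (losing the good compactification) and then take a Kummer cover ramified along the boundary $Z$ of the \emph{original} $\overline{Y}$; but the Kummer cover lives over $Y$, not over $Y_1$, and the fiber product $Y_1\times_Y Y_K$ has no evident good compactification on which to verify docility. Conversely, the pullback of $\calE$ to $Y_K$ alone need not be tame (wild inertia has not yet been killed), so you cannot conclude docility along $Z_K$ there either. Your final sentence about ``globalizing across $Z$'' addresses a different issue (simultaneity over components) and does not close this gap. The simplest repair is to drop the Kummer cover and argue, as the paper does, that a sufficiently deep congruence cover already yields unipotent monodromy at every boundary divisor of every compactification.
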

\begin{proof}
We first treat the \'etale case.
We may assume that $X$ is connected and choose a geometric basepoint $\overline{x}$.
By Definition~\ref{D:lisse weil}, $\calE$ corresponds to a representation of $W_X$ on some finite-dimensional $L$-vector space $V$. The action of $\pi_1^{\mathrm{et}}(X_{\overline{k}}, \overline{x})$ on $V$ has compact image in
$\GL(V)$, so we may choose a lattice $T$ stable under this action. By taking $f$ to trivialize the mod-$\ell$ action on $T$ (using Proposition~\ref{P:alterations}), we enforce that the actions of inertia are all tame. They are also quasi-unipotent by the usual argument of Grothendieck: the eigenvalues of Frobenius form a multiset of length at most $r := \rank(\calE)$ which is stable under taking $p$-th powers, so this multiset must consist entirely of roots of unity. To upgrade from quasi-unipotence to unipotence, it suffices to further trivialize the action on $T$ modulo some power of $\ell$, which may be bounded either in terms of $p$ and $r$ (by the previous consideration)
or $k$ (because $k$ only contains finitely many $\ell$-power roots of unity).

In the crystalline case, we instead apply the semistable reduction theorem for overconvergent $F$-isocrystals \cite[Theorem~7.6]{kedlaya-isocrystals}.
\end{proof}

\begin{remark} \label{R:tamely} 
In the \'etale case, the proof of Proposition~\ref{P:alter to tame} also shows that there exists a finite \'etale cover $f_0: X_0' \to X$ such that $f^* \calE$ is docile (but we cannot then guarantee the existence of a good compactification without a further alteration). 
After establishing the existence of \'etale companions, this will also hold in the crystalline case; see
Remark~\ref{R:semistable reduction}.
\end{remark}

\begin{remark} \label{R:tame to docile}
In the crystalline case, our notion of a tame coefficient object matches that of Abe--Esnault
\cite[\S 1.2]{abe-esnault}. Note that in either the \'etale or crystalline case,
a coefficient object on $X$ is tame if and only if it becomes docile after pullback along some finite \'etale cover of $X$ which is tamely ramified at the boundary.
\end{remark}

\begin{lemma} \label{L:extend tame}
A coefficient object on $X$ is tame (resp.\ docile) if and only if its restriction to every curve in $X$ is tame (resp. docile).
\end{lemma}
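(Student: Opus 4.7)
The ``only if'' direction is immediate from the pullback stability noted at the end of Definition~\ref{D:tame log}, since the inclusion of any curve $C \hookrightarrow X$ is a pullback.

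For the ``if'' direction, assume $\calE|_C$ is tame (resp.\ docile) for every curve $C$ in $X$. Given any open immersion $X \hookrightarrow \overline{X}$ with dense image and any irreducible divisor $D \subset \overline{X}$ with generic point $\eta$, the plan is to verify that $\calE$ is tame (resp.\ docile) along $D$. Since the monodromy condition at $\eta$ depends only on an open neighborhood of $\eta$, I will first normalize $\overline{X}$ and then shrink it to an open in which both $\overline{X}$ and $D$ are smooth over $k$. Next, I will pick a closed point $z \in D$ and a smooth curve $\overline{C} \subset \overline{X}$ passing through $z$ transversally to $D$; such a $\overline{C}$ is produced locally by prescribing values of a regular system of parameters at $z$. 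Setting $C := \overline{C} \setminus \{z\}$ gives a curve in $X$, for which the hypothesis provides tameness (resp.\ docility) of $\calE|_C$ along $\{z\}$.

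The key step is to argue that the local monodromy of $\calE$ at $\eta$ (along $D$) is controlled by that of $\calE|_C$ at $z$ (along $\{z\}$). In the \'etale case, this is an Abhyankar-type purity comparison: transversality of $\overline{C}$ and $D$ at $z$ forces a topological generator of tame inertia at $z$ in $\pi_1^{\mathrm{et}}(C)$ to map to a topological generator of tame inertia at $\eta$ in $\pi_1^{\mathrm{et}}(X)$, and this map is compatible with the representations defining $\calE$ via Definition~\ref{D:lisse weil}; hence tameness (resp.\ tameness plus unipotence) of the $z$-action propagates to the $\eta$-action. In the crystalline case, the analogous compatibility holds for the $\QQ$-unipotent (resp.\ unipotent) monodromy structure of an overconvergent $F$-isocrystal: the residue datum of the logarithmic extension of $\calE$ along $D$, in the sense of \cite[Definition~1.3]{shiho-log} and \cite[Definition~4.4.2]{kedlaya-semi1}, restricts faithfully to the residue datum of the logarithmic extension of $\calE|_C$ at $z$ under transversal pullback, preserving both the eigenvalue condition (tameness) and the nilpotence condition (docility).

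I expect the main obstacle to be the crystalline case: unlike the group-theoretic identification of inertia generators available in \'etale cohomology, comparing the local monodromy of an overconvergent $F$-isocrystal along $D$ with its counterpart along a transversal curve requires invoking the log-extension machinery of Shiho and Kedlaya, and the statement that the residue data pull back faithfully along a transversal immersion must be extracted from that framework rather than obtained as a purely formal consequence of transversality.
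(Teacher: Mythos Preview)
Your \'etale argument is fine and amounts to an explicit form of what the paper summarizes as ``Zariski--Nagata purity.'' The crystalline case, however, contains a circularity that you flag but do not resolve. You write that ``the residue datum of the logarithmic extension of $\calE$ along $D$ \ldots\ restricts faithfully to the residue datum of the logarithmic extension of $\calE|_C$ at $z$.'' But in the crystalline setting, docility (resp.\ tameness) along $D$ \emph{is} the assertion that a unipotent (resp.\ $\QQ$-unipotent) logarithmic extension exists; you cannot speak of its residue before you know it exists. Restricting residues along a transversal curve is a perfectly good argument for the forward direction, but for the converse you would need a criterion that manufactures the log-extension on $\overline{X}$ from the log-extensions on curves, and that is not a formal consequence of residue compatibility.

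The paper's route is different and avoids this trap. In the crystalline case it invokes a theorem of Shiho (cited as \cite[Theorem~7.4]{kedlaya-isocrystals}) which directly asserts the cut-by-curves criterion for log-extendability of overconvergent isocrystals; this is the nontrivial input that replaces your residue-comparison step, and it yields the docile case outright. The tame case is then deduced from the docile case via Remark~\ref{R:tame to docile}: a coefficient object is tame if and only if it becomes docile after a finite \'etale cover tamely ramified at the boundary, and both passage to such a cover and the docile criterion are curve-local. So rather than treating tame and docile in parallel via residues, the paper bootstraps tame from docile. If you want to salvage your outline, the missing ingredient is precisely Shiho's theorem; once you cite it, the rest collapses to what the paper does.
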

\begin{proof}
In the \'etale case, this is a straightforward consequence of Zariski--Nagata purity. In the crystalline case, a result of Shiho \cite[Theorem~7.4]{kedlaya-isocrystals} implies the docile version of the desired result; the tame version then follows using Remark~\ref{R:tame to docile}.
\end{proof}

\section{Curves}
\label{sec:dim 1}

We now introduce the use of the Langlands correspondence to prove Theorem~\ref{T:one-dim case}, thus resolving Conjecture~\ref{conj:deligne} when $\dim(X) = 1$; this amounts to a recitation of the works of L. Lafforgue and Abe.
We then make some additional calculations on curves in preparation for the study of higher-dimensional varieties.

\begin{hypothesis} \label{H:curve}
Throughout \S\ref{sec:dim 1}, assume that $X$ is a curve over $k$.
Let $\overline{X}$ be the unique smooth compactification of $X$ and 
put $Z := \overline{X} \setminus X$.
\end{hypothesis}

\subsection{The Langlands correspondence for curves}

We begin this discussion by recalling the statement of the Langlands correspondence for curves.
For compatibility with the literature in the \'etale case, we must introduce an additional definition.
\begin{defn} \label{D:lisse etale}
Suppose that $X$ is connected (but not necessarily a curve, for this definition only)
and let $\overline{x}$ be a geometric point of $X$.
For $L$ a finite extension of $\QQ_\ell$, a \emph{lisse \'etale $L$-sheaf} is a continuous representation of $\pi_1^{\mathrm{et}}(X, \overline{x})$ on a finite-dimensional $L$-vector space. By taking the 2-colimit over $L$, we obtain the category of \emph{lisse \'etale $\overline{\QQ}_\ell$-sheaves}. (We may then extend this definition to disconnected $X$ in an obvious fashion.)

Note that a continuous representation of $W_X$ extends to $\pi_1^{\mathrm{et}}(X, \overline{x})$ in at most one way (because $\ZZ$ is dense in $G_k$), and does so at all if and only if
its image has compact closure; from this observation, it follows that the lisse \'etale $L$-sheaves constitute a subcategory of $\Weil(X) \otimes L$ which is closed under extensions and subquotients.
By a similar group-theoretic argument \cite[Proposition~1.3.14]{deligne-weil2}, an irreducible lisse Weil $L$-sheaf is a lisse \'etale sheaf if and only if its determinant is a lisse \'etale sheaf; in particular, this is the case when the determinant is of finite order. See also Lemma~\ref{L:lisse valuation}.
\end{defn}

\begin{remark} \label{R:local Euler factors}
In the statement of Theorem~\ref{T:abe correspondence}, we refer to \emph{local Euler factors}
and \emph{local $\epsilon$-factors} associated to coefficient objects;
these are associated to a coefficient object $\calE$ on $X$ and a closed point $x \in \overline{X}$.
The local Euler factor is defined by pushing coefficient objects into a larger (derived) category of constructible objects and then pulling back to $x$; see \cite[\S A.3]{abe-companion} for a detailed description.
The construction of local $\epsilon$-factors
is due to Laumon \cite{laumon} in the \'etale case (extending work of Langlands and Deligne)
and Abe--Marmora \cite{abe-marmora}
in the crystalline case.

We will also need a more explicit description of the local Euler factor at $x$ in the case where $\calE$ is a crystalline coefficient object which is docile at $x$: it is the reverse characteristic polynomial of $F_x$ on
the kernel of the residue map at $x$. See for example \cite[Th\'eor\`eme~3.4.1]{caro-curve}.
\end{remark}

\begin{theorem}[L. Lafforgue, Abe] \label{T:abe correspondence}
Fix an embedding of $\overline{\QQ}$ into the full coefficient field.
Then for each positive integer $r$,
there is a bijection between irreducible coefficient objects of rank $r$ on $X$ with determinant of finite order,
and cuspidal automorphic representations of $\GL_r(\AAA_{k(X)})$ with central character of finite order which are unramified in $X$.
Moreover, this bijection may be chosen so that
Frobenius eigenvalues at points of $X$ correspond to Hecke eigenvalues at unramified places,
while local Euler factors and $\epsilon$-factors at points of $Z$ correspond
to the analogous quantities at ramified places.
\end{theorem}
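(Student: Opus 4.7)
The plan is to split the assertion into its two constituent cases---the \'etale case ($\ell \neq p$) due to L.~Lafforgue and the crystalline case ($\ell = p$) due to Abe---and in both cases follow the common architecture inherited from Drinfeld: construct moduli stacks of shtukas, exhibit the desired correspondence inside their cohomology, then verify the local compatibilities. Concretely, I would proceed in three phases. First, I would introduce the moduli stack $\mathrm{Cht}_r$ of rank-$r$ shtukas over $\overline{X}$ with appropriate level structure away from $Z$; after suitable truncation and finite-type approximation, its cohomology carries commuting actions of a Hecke algebra and of $W_{k(X)}^r$ (one copy per leg). Second, I would decompose the Hecke-isotypic component corresponding to a cuspidal automorphic representation $\pi$ (with central character of finite order, unramified on $X$) and extract from it a representation $\sigma(\pi)$ of $W_{k(X)}$; comparing the Arthur--Selberg trace formula with the Lefschetz trace formula of \eqref{eq:lefschetz} and Grothendieck's sheaf-function dictionary identifies Hecke eigenvalues at unramified places with the reverse characteristic polynomials $P(\sigma(\pi)_x, T)$. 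Third, I would argue by induction on $r$, using the cuspidal cohomology and purity (Deligne's Weil II), that $\sigma(\pi)$ is irreducible of rank $r$ with determinant of finite order.

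In the \'etale case I would follow Lafforgue directly, controlling the geometry of $\mathrm{Cht}_r$ via its stratification by Harder--Narasimhan polygons and using the cohomology with compact supports. For the reverse direction (associating a cuspidal automorphic representation to an arbitrary irreducible coefficient object $\calE$ with finite-order determinant) I would use the converse theorem of Piatetski-Shapiro and Cogdell--Piatetski-Shapiro for function fields: attach to $\calE$ and to each irreducible rank-$r'$ object with $r' < r$ a pair $L$- and $\epsilon$-factor using Laumon's product formula \cite{laumon}, verify the required analytic properties from the rank-$r'$ case (which is known by induction), and obtain an automorphic $\pi$ whose local factors match those of $\calE$ everywhere.

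In the crystalline case I would transpose this entire argument, replacing $\ell$-adic \'etale cohomology of $\mathrm{Cht}_r$ by rigid cohomology with overconvergent $F$-isocrystal coefficients on the legs, as done by Abe. This relies on several foundational inputs that must be marshalled as black boxes: the Lefschetz trace formula in rigid cohomology (already invoked in Definition~\ref{D:L-function}), Caro's six-functor formalism for arithmetic $\mathcal{D}$-modules, the construction of $p$-adic local $\epsilon$-factors by Abe--Marmora \cite{abe-marmora}, and Tsuzuki-type Chebotarev statements ensuring that Frobenius data on $X^\circ$ determine the coefficient object up to semisimplification. With these in hand, Lafforgue's geometric arguments and the application of the converse theorem go through essentially verbatim. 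I expect the genuinely hard step---already the hardest in Lafforgue's original argument, and where the transposition to the crystalline setting is most delicate---to be the matching of local $\epsilon$-factors at the ramified places in $Z$, since at those points Frobenius traces alone do not pin down the coefficient object and one must carefully align the Galois side with the local Langlands correspondence for $\GL_r$ over a local function field. The crystalline version of this matching, resting on the comparatively recent Abe--Marmora theory of $p$-adic $\epsilon$-factors, is the main obstacle.
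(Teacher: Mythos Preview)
Your outline is a reasonable high-level sketch of how Lafforgue and Abe actually prove this theorem, but the paper does not attempt to reproduce their arguments: its proof consists entirely of citations to \cite{lafforgue} and \cite[Theorem~4.2.2]{abe-companion}. This is consistent with the explicit statement in the introduction that the Langlands correspondence is used ``as a black box'' throughout, and with the remark immediately following the theorem that the $p$-adic cohomological formalism needed for Abe's argument (constructible sheaves, complexes, nonsmooth varieties, stacks) lies outside the scope of this paper.

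So there is no mathematical error in your proposal, but it is mismatched in scope: you are sketching a several-hundred-page body of work that the paper deliberately quotes rather than reproves. If you want to align with the paper, replace your three-phase plan with the one-line citation. If instead you intend to give an expository sketch, a couple of small corrections are in order: in L.~Lafforgue's setup the shtukas have two legs (one zero, one pole), so the Galois action on cohomology is via two copies of the Galois group and partial Frobenii rather than ``$W_{k(X)}^r$ (one copy per leg)''; and the identification of the ``hardest step'' is debatable---the compactification of the moduli of shtukas and the control of the boundary contributions in cohomology are at least as delicate as the $\epsilon$-factor matching.
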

\begin{proof}
See \cite{lafforgue} for the \'etale case and \cite[Theorem~4.2.2]{abe-companion} for the crystalline case.
\end{proof}

\begin{remark}
As noted earlier, Theorem~\ref{T:abe correspondence} involves a $p$-adic replication of Lafforgue's geometric realization of the Langlands correspondence for $\GL_r$ in the cohomology of moduli spaces of shtukas. Consequently, it requires not just the rigid cohomology of smooth varieties with coefficients in overconvergent $F$-isocrystals, but a much more detailed cohomological formalism: constructible sheaves, complexes, nonsmooth varieties, and even algebraic stacks. However, since we take Theorem~\ref{T:abe correspondence} as a black box, we will not encounter any of these subtleties; the reader interested in them may start with the discussion at the end of \cite{kedlaya-isocrystals}.
\end{remark}

\begin{cor} \label{C:finiteness curve}
Suppose that $X$ is proper.
For any positive integer $r$ and any category of coefficient objects on $X$, there are only finitely many
isomorphism classes of irreducible objects of rank $r$ up to twisting by constant objects of rank $1$.
\end{cor}
\begin{proof}
By Lemma~\ref{L:decompose to finite order}, the problem does not change if we consider only objects with determinant of finite order. For such objects, we may apply Theorem~\ref{T:abe correspondence}
to equate the problem to a corresponding finiteness statement for automorphic representations,
for which an argument of Harder applies
\cite[Theorem~1.2.1]{harder}
(compare the discussion in \cite[II.2.1]{yu}).
\end{proof}

\begin{remark}
In light of Corollary~\ref{C:finiteness curve} (and Corollary~\ref{C:finiteness curve2} below), there has been much interest in the question of counting lisse Weil $\overline{\QQ}_\ell$-sheaves on curves with various prescribed properties,
starting with a celebrated note of Drinfeld \cite{drinfeld-number}.
Subsequent work in this direction can be found in \cite{deligne-comptage, deligne-flicker, flicker, flicker2, esnault-kerz, kontsevich, yu}. Most previous progress has been made using automorphic trace formulas; the existence of crystalline companions may provide an alternate approach through the geometry of moduli spaces of vector bundles. For a concrete problem in this direction, we suggest to recover the result of \cite{drinfeld-number} by counting $F$-isocrystals.
\end{remark}

\subsection{Companions of algebraic coefficient objects}

We next address algebraicity and the existence of companions, corresponding to parts (ii), (v), and (vi) of Conjecture~\ref{conj:deligne}. It is convenient to construct companions not only for coefficient objects which are irreducible of finite determinant, but also for algebraic coefficient objects.

\begin{theorem} \label{T:companion dimension 1}
Every coefficient object on $X$ which is irreducible with determinant of finite order is uniformly algebraic and admits companions in all categories of coefficient objects, which are again irreducible with determinant of finite order.
Moreover, the companion in a given category of coefficient objects, for a given embedding of $\overline{\QQ}$
into the full coefficient field, is unique up to isomorphism.
\end{theorem}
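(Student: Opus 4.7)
The plan is to invoke the Langlands correspondence of Theorem~\ref{T:abe correspondence} as a black box, using the automorphic side as a pivot to transport an irreducible coefficient object with finite order determinant between different categories. First I would fix an embedding of $\overline{\QQ}$ into the full coefficient field of the source category of $\calE$, and apply Theorem~\ref{T:abe correspondence} to obtain the corresponding cuspidal automorphic representation $\pi$ of $\GL_r(\AAA_{k(X)})$ with central character of finite order, unramified in $X$.

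For uniform algebraicity, I would use the fact that since $\pi$ has finite-order central character, it is algebraic in the sense that its Hecke eigenvalues at unramified places are algebraic numbers and generate a single number field $E_\pi$, the rationality field of $\pi$. By the compatibility of the bijection with Frobenius/Hecke eigenvalues (already in the statement of Theorem~\ref{T:abe correspondence}), each polynomial $P(\calE_x, T)$ with $x \in X^\circ$ has coefficients in $E_\pi$, so $\calE$ is $E_\pi$-algebraic. For the existence of companions: given any target category and any embedding of $\overline{\QQ}$ into its full coefficient field, I would apply Theorem~\ref{T:abe correspondence} in the target category to the same $\pi$. The resulting irreducible coefficient object $\calE'$ (with determinant of finite order) has Frobenius eigenvalues matching those of $\calE$ under the two embeddings of $\overline{\QQ}$, hence is the desired companion.

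For uniqueness, suppose $\calE_1'$ and $\calE_2'$ are two companions of $\calE$ in the same category for a fixed embedding. By Theorem~\ref{T:abe correspondence} each corresponds to a cuspidal automorphic representation $\pi_i$ of $\GL_r(\AAA_{k(X)})$ with finite-order central character. The companion relation gives $P((\calE_1')_x, T) = P((\calE_2')_x, T)$ for every $x \in X^\circ$; translating back across the bijection, the Hecke eigenvalues of $\pi_1$ and $\pi_2$ agree at every place of $k(X)$ corresponding to a point of $X$, hence at all but finitely many places. Strong multiplicity one for cuspidal automorphic representations of $\GL_r$ over a global function field then yields $\pi_1 \cong \pi_2$, whence $\calE_1' \cong \calE_2'$ by the injectivity of the bijection.

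The real obstacle is entirely concealed inside the black box of Theorem~\ref{T:abe correspondence} (the work of L.~Lafforgue and Abe); granting that theorem and strong multiplicity one for $\GL_r$, the argument above reduces to bookkeeping with the embeddings of $\overline{\QQ}$ into the various full coefficient fields.
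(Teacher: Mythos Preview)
Your proposal is correct and is essentially the natural unpacking of the paper's proof, which consists solely of a citation to \cite[Theorems~4.4.1, 4.4.5]{abe-companion}. The route you outline---pivot through the automorphic side via Theorem~\ref{T:abe correspondence}, read off uniform algebraicity from the rationality field of $\pi$, and invoke strong multiplicity one for uniqueness---is exactly how these cited results are assembled in the work of Lafforgue and Abe; you are right that the substantive content (including the algebraicity of Hecke eigenvalues and the finiteness needed to make the rationality field a number field) is entirely inside the black box.
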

\begin{proof}
See \cite[Theorems~4.4.1, 4.4.5]{abe-companion}. Note that \cite[Theorem~4.4.5]{abe-companion} is stated in a conditional form, but is in fact unconditional when $\dim(X) = 1$.
\end{proof}

It is useful to extract the following corollary.
\begin{cor} \label{C:Galois conjugates}
Let $\calE$ be a coefficient object on $X$ which is
$E$-algebraic for some Galois number field $E$.
Then for each $\tau \in \Gal(E/\QQ)$, there exists a coefficient object $\calE_\tau$ on $X$ such that for each $x \in X^\circ$, 
$P((\calE_\tau)_x, T) = \tau(P(\calE_x, T))$ $($where $\tau$ acts coefficientwise on $E[T])$.
\end{cor}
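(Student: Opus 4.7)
The plan is to reduce to Theorem~\ref{T:companion dimension 1}, which constructs companions for irreducible coefficient objects with determinant of finite order. The main content of the corollary is already contained in that theorem; the rest is bookkeeping.

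First I observe that $P(\calE_x, T)$ depends only on the semisimplification of $\calE$, so I may replace $\calE$ by $\bigoplus_i \calE_i$ with each $\calE_i$ irreducible; each $\calE_i$ is still algebraic, since its Frobenius eigenvalues at every $x$ form a subset of the roots of $P(\calE_x, T)$. By Lemma~\ref{L:decompose to finite order}, for each $i$ I choose a constant rank-$1$ coefficient object $\calL_i$ such that $\calF_i := \calE_i \otimes \calL_i$ has determinant of finite order. Then $\calL_i$ is itself algebraic: its Frobenius eigenvalue is, up to a root of unity, an $r_i$-th root of the inverse Frobenius eigenvalue of the algebraic object $\det(\calE_i)$, where $r_i = \rank(\calE_i)$.

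Next, I fix an embedding $\iota: \overline{\QQ} \hookrightarrow \overline{\QQ}_*$ into the full coefficient field whose restriction to $E$ is the given one, and extend $\tau$ to an automorphism $\sigma$ of $\overline{\QQ}$ (using that $E/\QQ$ is Galois and $\overline{\QQ}$ is an algebraic closure of $E$). Applying Theorem~\ref{T:companion dimension 1} to each irreducible $\calF_i$ in the same category, but with respect to the new embedding $\iota \circ \sigma: \overline{\QQ} \hookrightarrow \overline{\QQ}_*$, yields a coefficient object $\calF_{i,\tau}$ characterized by
\[
P(\calF_{i,\tau,x}, T) = \iota\bigl(\sigma(\iota^{-1}(P(\calF_{i,x}, T)))\bigr) \qquad (x \in X^\circ).
\]
For the rank-$1$ constant $\calL_i$, which is determined by a single Frobenius eigenvalue $c_i \in \iota(\overline{\QQ})^\times$, I let $\calL_{i,\tau}$ be the rank-$1$ constant object with eigenvalue $\iota(\sigma(\iota^{-1}(c_i)))$; this does not require the companion theorem.

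Finally, set $\calE_\tau := \bigoplus_i (\calF_{i,\tau} \otimes \calL_{i,\tau}^{-1})$. Since $P(-_x, T)$ for tensor products and direct sums is obtained by polynomial operations on roots that commute with any field automorphism, we get $P((\calE_\tau)_x, T) = \iota\sigma\iota^{-1}(P(\calE_x, T))$ for every $x$. Because $P(\calE_x, T) \in \iota(E)[T]$ and $\sigma|_E = \tau$, after identifying $E$ with $\iota(E)$ this equals $\tau(P(\calE_x, T))$, as required. The only mild subtlety is that $\calE_i$ need not have determinant of finite order, which forces me to separate off the constant rank-$1$ twist $\calL_i^{-1}$ and construct its Galois conjugate by hand rather than through Theorem~\ref{T:companion dimension 1}; there is no genuine obstacle.
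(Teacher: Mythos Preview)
Your proof is correct and follows essentially the same approach as the paper's: reduce via Lemma~\ref{L:decompose to finite order} to irreducible objects with finite-order determinant, then invoke the Langlands correspondence (you cite Theorem~\ref{T:companion dimension 1}, the paper cites Theorem~\ref{T:abe correspondence} directly, but these amount to the same thing---varying the embedding of $\overline{\QQ}$ into the full coefficient field). Your write-up is in fact more careful than the paper's: you explicitly semisimplify, treat each irreducible constituent with its own constant twist $\calL_i$, extend $\tau$ to an automorphism $\sigma$ of $\overline{\QQ}$, and verify that the final characteristic polynomial lands back in $E[T]$ even though the individual $\calE_i$ need not be $E$-algebraic.
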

\begin{proof}
Recall that if $E'$ is a Galois number field containing $E$, then $\Gal(E'/\QQ)$ surjects onto $\Gal(E/\QQ)$; this means that there is no harm in enlarging $E$ during the course of the proof.
By Lemma~\ref{L:decompose to finite order}, 
there exists a constant coefficient object $\calL$ of rank 1 such that $\det(\calE \otimes \calL)$ is of finite order. Since $\calE$ is $E$-algebraic for some $E$, $\calL$ is $E$-algebraic for some (possibly larger) $E$; we may thus twist by $\calL$ to reduce to the case where $\calE$ is irreducible and $\det(\calE)$ is of finite order. At this point, the claim is immediate from Theorem~\ref{T:abe correspondence},
or more precisely from the fact that there is such a correspondence for each embedding of $\overline{\QQ}$ into the full coefficient field (see \cite[(2.2)]{deligne-finite} for the corresponding discussion in the \'etale case).
\end{proof}

\begin{cor} \label{C:uniformly algebraic}
For $\calE$ a coefficient object on $X$, the following conditions are equivalent.
\begin{enumerate}
\item[(i)]
$\calE$ is algebraic.
\item[(ii)]
$\calE$ is uniformly algebraic.
\item[(iii)]
Each irreducible constituent of $\calE$ is the twist of an object with determinant of finite order by an algebraic object of rank $1$ pulled back from $k$.
\item[(iv)]
There exists a positive integer $n$ such that the pullback of $\calE$ to $X_n$ satisfies (iii) $($with $k$ replaced by $k_n)$.
\end{enumerate}
\end{cor}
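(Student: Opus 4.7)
The plan is to close the loop of equivalences. The implications (ii)$\Rightarrow$(i) and (iii)$\Rightarrow$(iv) are tautological (the latter by taking $n=1$), so it remains to prove (iii)$\Rightarrow$(ii), (iv)$\Rightarrow$(i), and the substantive implication (i)$\Rightarrow$(iii). I expect the last of these to be the main obstacle; the other two are essentially formal manipulations once Theorem~\ref{T:companion dimension 1} is in hand.

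For (iii)$\Rightarrow$(ii), I would reduce to the case of irreducible $\calE$ by noting that $P(\calE_x,T)$ factors as the product of the reverse characteristic polynomials of Frobenius on the composition factors, so it is enough to produce a single number field containing the coefficients of each factor. For an irreducible $\calE = \calE_0 \otimes \calL$ satisfying (iii), Theorem~\ref{T:companion dimension 1} makes $\calE_0$ uniformly algebraic over some number field $E_0$, while $P(\calL_x, T) = 1 - \alpha^{[\kappa(x)\colon\!k]} T$ lies in $\QQ(\alpha)[T]$ for $\alpha$ the Frobenius scalar of $\calL$; the product is then uniformly algebraic over $E_0(\alpha)$, and we pass to a compositum over the finitely many composition factors. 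For (iv)$\Rightarrow$(i), apply (iii)$\Rightarrow$(ii) to $\calE|_{X_n}$ to conclude its uniform algebraicity; then for $x \in X^\circ$ and any preimage $y \in X_n^\circ$, the Frobenius on $\calE_y$ is an integer power of the Frobenius on $\calE_x$, so the corresponding eigenvalues are the same powers, and algebraicity transfers along taking $m$-th roots.

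The main step is (i)$\Rightarrow$(iii). I would first reduce to $\calE$ irreducible: each composition factor has reverse characteristic polynomial a monic divisor of $P(\calE_x, T) \in \overline{\QQ}[T]$, hence itself lies in $\overline{\QQ}[T]$. For $\calE$ irreducible and algebraic, Lemma~\ref{L:decompose to finite order} produces a constant rank-$1$ object $\calL$ on $\Spec k$ such that $\calE_0 := \calE \otimes \calL^{-1}$ has determinant of finite order; since $\calE_0$ is then irreducible with finite-order determinant, Theorem~\ref{T:companion dimension 1} makes it uniformly algebraic. To establish (iii) it then suffices to verify that $\calL$ is itself algebraic. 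Writing $\alpha \in \overline{\QQ}_*^\times$ for the Frobenius scalar of $\calL$ and $r := \rank(\calE)$, I would compare determinants at any chosen closed point $x$: the identity $\det(F\mid \calE_x) = \alpha^{r[\kappa(x)\colon\!k]} \det(F\mid (\calE_0)_x)$ expresses an algebraic number as $\alpha^{r[\kappa(x)\colon\!k]}$ times an algebraic number, forcing $\alpha$ to be algebraic. This last observation is where the entire argument is concentrated, and it depends crucially on the availability of Theorem~\ref{T:companion dimension 1} in dimension one; the remainder is a purely formal manipulation of composition factors, tensor products, and determinants.
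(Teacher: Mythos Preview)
Your argument is correct and follows essentially the same strategy as the paper's proof: the paper also treats (ii)$\Rightarrow$(i) and (iii)$\Rightarrow$(iv) as obvious, invokes Lemma~\ref{L:decompose to finite order} for (i)$\Rightarrow$(iii), and Theorem~\ref{T:companion dimension 1} to close the loop. The only organizational difference is that the paper states (iv)$\Rightarrow$(ii) in one step, whereas you factor this as (iv)$\Rightarrow$(i) (via powers of Frobenius eigenvalues under base change) together with a separate (iii)$\Rightarrow$(ii); your version also spells out explicitly, via the determinant comparison, why the constant twist produced by Lemma~\ref{L:decompose to finite order} is itself algebraic---a point the paper leaves to the reader.
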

\begin{proof}
It is obvious that (ii) implies (i) and that (iii) implies (iv).
By Theorem~\ref{T:companion dimension 1}, (iv) implies (ii).
By Lemma~\ref{L:decompose to finite order}, (i) implies (iii).
\end{proof}

\begin{cor} \label{C:algebraic companion}
Conjecture~\ref{conj:companion} holds when $\dim(X) = 1$ $($for any $\ell, \ell')$.
\end{cor}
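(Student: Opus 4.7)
The plan is to reduce the algebraic coefficient object $\calE$ to an irreducible one, decompose it via Corollary~\ref{C:uniformly algebraic}(iii) into a tensor product of a finite-determinant piece with a constant rank-$1$ piece, and then assemble a companion out of companions of the two factors. Since ``companion'' is defined purely in terms of Frobenius characteristic polynomials at closed points and these depend only on semisimplifications, I may replace $\calE$ by its semisimplification. Writing $\calE = \bigoplus_i \calE_i$ as a direct sum of irreducibles, it then suffices to produce a companion of each $\calE_i$. A priori, algebraicity only passes from $\calE$ to the product polynomial $P(\calE_x,T)$; but the full coefficient field is algebraically closed, so each monic factor $P((\calE_i)_x,T)$ has roots lying in $\overline{\QQ}$ and hence lies in $\overline{\QQ}[T]$. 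Thus each $\calE_i$ is itself algebraic, and I may henceforth assume $\calE$ is irreducible.

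By Corollary~\ref{C:uniformly algebraic}(iii) applied to the irreducible $\calE$, there is a decomposition $\calE \cong \calE_0 \otimes \calL$, where $\calE_0$ has determinant of finite order (and remains irreducible, since $\calL$ has rank $1$) and $\calL$ is an algebraic rank-$1$ object pulled back from $\Spec(k)$. Theorem~\ref{T:companion dimension 1} supplies a companion $\calE_0'$ of $\calE_0$ in the target category. As for $\calL$, it is determined on $\Spec(k)$ by a single Frobenius eigenvalue $\alpha \in \overline{\QQ}^{\times}$; I would let $\calL'$ be the rank-$1$ constant object in the target category with Frobenius eigenvalue $\alpha$, transported into the target full coefficient field via the fixed identification of algebraic closures. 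By construction $\calL'$ is a companion of $\calL$. Setting $\calE' := \calE_0' \otimes \calL'$, for any $x \in X^\circ$ the Frobenius eigenvalues of $\calE'_x$ are the products of the eigenvalues of $(\calE_0')_x$ with $\alpha^{[\kappa(x)\colon\! k]}$, and these match the analogous products for $\calE_x$. Hence $\calE'$ is a companion of $\calE$ in the target category.

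There is no serious obstacle here, since Theorem~\ref{T:companion dimension 1} already carries out the heavy lifting in the finite-determinant case and the substance of this corollary is simply the reduction to that case through Corollary~\ref{C:uniformly algebraic}. The only subtlety worth flagging is the need to verify that each irreducible constituent of an algebraic object is itself algebraic before invoking Corollary~\ref{C:uniformly algebraic}; this follows, as noted above, from unique factorization of characteristic polynomials over the algebraically closed full coefficient field.
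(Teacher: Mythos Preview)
Your proof is correct and follows essentially the same route as the paper's: reduce to an irreducible object, write it as a finite-determinant object tensored with a constant algebraic rank-$1$ object, and assemble a companion from companions of the two factors. The paper invokes Lemma~\ref{L:decompose to finite order} directly rather than going through Corollary~\ref{C:uniformly algebraic}(iii), and is terser about the reduction to irreducibles and about why $\calL$ admits a companion, but the substance is the same.
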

\begin{proof}
We may reduce immediately to the case of an irreducible object;
by Lemma~\ref{L:decompose to finite order}, it must have the form
$\calE \otimes \calL$
where $\calE$ is a coefficient object on $X$ with determinant of finite order and $\calL$
is a constant algebraic coefficient object of rank 1. Note that $\calE$ must itself be irreducible;
hence by Theorem~\ref{T:companion dimension 1}, $\calE$ admits a companion. Meanwhile, $\calL$ trivially admits a companion, as thus does $\calE \otimes \calL$.
\end{proof}

The following statement resembles \cite[Lemma~2.7]{drinfeld-deligne} in the \'etale case; we recast the argument so that it works uniformly, but without taking care to achieve the same bound on the field extension as in \textit{op. cit.} (Note that we do not claim that $L_1 = L_0$.)

\begin{cor} \label{C:bound coefficient extension}
Fix a category of coefficient objects and an embedding of $\overline{\QQ}$ into the full coefficient field.
Let $E$ be a number field and let $L_0$ be the completion of $E$ in the full coefficient field.
Let $\calE$ be an $E$-algebraic coefficient object on $X$ (in the specified category) of rank $r$.
Then there exists a finite extension $L_1$ of $L_0$, depending only on $L_0$ and $r$ (but not on $X$ or $\calE$ or $E$),
for which $\calE$ can be realized as an object of $\Weil(X) \otimes L_1$ or $\FIsoc^\dagger(X) \otimes L_1$.
\end{cor}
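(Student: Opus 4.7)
The plan is to use Galois descent with the key input being uniqueness of semisimple coefficient objects determined by Frobenius traces (Theorem~\ref{T:chebotarev}); the degree of the required coefficient extension will then be bounded by a Schur--Brauer analysis over the local field $L_0$.

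First, I would reduce to the case that $\calE$ is semisimple. Assuming each Jordan--H\"older constituent descends to some extension $L_1/L_0$, the extension classes assembling $\calE$ from its constituents live in $\Ext^1$-groups between them; these groups are finite-dimensional vector spaces over the coefficient field and, for a flat base change $L_1 \hookrightarrow L$, satisfy $\Ext^1_{L_1}(-,-) \otimes_{L_1} L = \Ext^1_L(-,-)$. The class of $\calE$ will then descend to $L_1$ provided it is $\Gal(L/L_1)$-fixed, a condition that will follow from trace-uniqueness applied to the extension datum.

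Next, assume $\calE$ is semisimple and defined over a finite Galois extension $L/L_0$. Because $\calE$ is $E$-algebraic and $E$ embeds into $L_0$, the polynomials $P(\calE_x,T)$ lie in $L_0[T]$. Hence for each $\sigma \in \Gal(L/L_0)$ the Galois twist $\calE^\sigma$ has the same characteristic polynomials as $\calE$, and Theorem~\ref{T:chebotarev} yields an isomorphism $\phi_\sigma : \calE \to \calE^\sigma$ in the ambient category. The failure of $\sigma \mapsto \phi_\sigma$ to be a homomorphism measures a $2$-cocycle in $Z^2(\Gal(L/L_0), L^\times)$, giving a Brauer class $\alpha \in \Br(L_0)$ whose vanishing is exactly the obstruction to descending $\calE$ to $L_0$. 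Because each absolutely irreducible constituent of $\calE$ has rank at most $r$, a standard Schur-index computation shows that $\alpha$ has order dividing $r$.

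Since $L_0$ is a finite extension of $\QQ_\ell$ or $\QQ_p$, local class field theory gives $\Br(L_0) \cong \QQ/\ZZ$, and any class of order dividing $r$ is annihilated by any extension of $L_0$ of degree divisible by $r$. Taking $L_1$ to be the unique unramified extension of $L_0$ of degree $r!$ (which depends only on $L_0$ and $r$), the image of $\alpha$ in $\Br(L_1)$ vanishes, so $\calE$ descends to an object of $\Weil(X) \otimes L_1$ or $\FIsoc^\dagger(X) \otimes L_1$. The main obstacle is the rigorous handling of the semisimple reduction in the crystalline setting: trace-uniqueness pins down only the semisimplification, and one needs to verify that the extension data lives in a Galois-fixed part of the relevant $\Ext^1$-group. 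This should follow from the Tannakian formalism recalled in \S\ref{sec:mono}, together with flat base change for $\Ext$ in the categories $\Weil(X) \otimes L$ and $\FIsoc^\dagger(X) \otimes L$, but the crystalline case requires more care than the \'etale case since the category is constructed as a $2$-colimit.
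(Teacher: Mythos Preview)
Your overall strategy---Galois descent, a Brauer obstruction in $H^2(\Gal(L/L_0), L^\times)$, then local class field theory to kill it by a bounded-degree extension---is exactly the paper's approach. The one substantive difference is how the order of the Brauer class is bounded: you invoke the Schur index (the class is represented by a central simple $L_0$-algebra of degree~$r$, hence has period dividing~$r$), whereas the paper first reduces, via Lemma~\ref{L:decompose to finite order}, to $\calE$ irreducible with determinant of finite order $n$ (bounded in terms of $L_0$ alone, since $L_0$ contains only finitely many roots of unity), and then observes that the cocycle takes values in $\mu_{rn}$. Both are valid for irreducible~$\calE$; the paper's trick avoids appealing to the Schur-index machinery.

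Two points deserve comment. First, your worry about the reduction to the semisimple case is well placed: trace-uniqueness (whether Theorem~\ref{T:chebotarev} or Theorem~\ref{T:companion dimension 1}) only pins down the semisimplification, and it genuinely cannot force the extension classes to be Galois-fixed. Indeed, for a non-split extension of two copies of the trivial object, the field of definition is governed by an arbitrary element of a possibly large $\Ext^1$-group, and no bound in terms of $r$ and $L_0$ alone is available. The paper's proof has the same lacuna (it ``decomposes into irreducibles,'' which presupposes semisimplicity); in practice the corollary is only applied to irreducible or semisimple objects (see Lemma-Definition~\ref{L:generic Newton polygon}), so this does not affect the paper's later arguments. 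Second, in the semisimple but non-irreducible case your phrase ``a standard Schur-index computation shows that $\alpha$ has order dividing~$r$'' is too quick: the Galois action may permute the irreducible summands, so there is not a single Brauer class but rather one for each Galois orbit, and a short extra argument is needed. The paper sidesteps this by reducing directly to the irreducible case.

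Finally, a contextual remark: this corollary is stated under Hypothesis~\ref{H:curve}, so the uniqueness input available at this point in the paper is Theorem~\ref{T:companion dimension 1}, not Theorem~\ref{T:chebotarev}; the extension to general $X$ using Theorem~\ref{T:chebotarev} is recorded separately as Corollary~\ref{C:bound coefficient extension2}.
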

\begin{proof}
There is no harm in enlarging $L_0$ to ensure that $\mu_r \subset L_0$.
We may further assume that $\calE$ is irreducible (of rank $r$) with determinant of finite order,
as then for general $\calE$ we may decompose into irreducibles and apply Lemma~\ref{L:decompose to finite order} to obtain the original statement (but with a larger bound on $L_1$).
Let $n$ be the order of $\det(\calE)$; since $L_0$ contains only finitely many roots of unity, $n$ is bounded in terms of $L_0$ alone.

For $L$ a finite extension of $L_0$, let $\calC_L$ denote the category $\Weil(X) \otimes L$ or $\FIsoc^\dagger(X) \otimes L$ within the full category of coefficient objects under consideration.
By definition, we can realize $\calE$ in $\calC_L$  for some finite extension $L$ of $L_0$, which we may assume is Galois over $L_0$.
Since $\calE$ remains irreducible upon enlarging $L$, by Schur's lemma the endomorphism algebra of $\calE$ in $\calC_L$ is equal to $L$.

Put $G := \Gal(L/L_0)$; then $G$ acts on the category $\calC_L$,
so we may form the pullback $\tau^* \calE$ for any $\tau \in G$. By the uniqueness aspect of
Theorem~\ref{T:companion dimension 1},
we may choose an isomorphism $\iota_\tau: \tau^* \calE \cong \calE$ for each $\tau$.
For $\tau_1, \tau_2 \in G$, 
the isomorphism $\iota_{\tau_1 \tau_2}: (\tau_1 \tau_2)^* \calE \to \calE$ differs 
from the composition $ \iota_{\tau_1} \circ \tau_1^*(\iota_{\tau_2})$ by multiplication by a nonzero scalar
$c(\tau_1,\tau_2) \in L$. The $c_{\tau_1, \tau_2}$ form a 2-cocycle and thus represent a class in $H^2(G, L^\times)$;
since $\det(\calE)$ has order $n$, this class in fact belongs to $H^2(G, \mu_{rn})$. By local class field theory, this class (which represents an $rn$-torsion element of the Brauer group of $L_0$) can be trivialized by passing from $L_0$ to any finite extension $L_1$ of $L_0$ of degree $rn$ (\textit{e.g.} the unramified one); this proves the claim.
\end{proof}

\subsection{Proof of  Theorem~\ref{T:one-dim case}}
\label{subsec:one-dim}

We now complete the proof of Theorem~\ref{T:one-dim case}. For this, in light of Theorem~\ref{T:companion dimension 1}
we could simply assume $\ell \neq p$ and appeal to \cite[Th\'eor\`eme~VII.6]{lafforgue}; however, we prefer to spell out a bit more explicitly
how the various aspects of Conjecture~\ref{conj:deligne} are addressed. Let $r$ be the rank of $\calE$.

\begin{enumerate}
\item[(i)]
By Theorem~\ref{T:companion dimension 1}, we may assume that $\ell \neq p$ and that $\calE$ is uniformly algebraic.
Using Corollary~\ref{C:Galois conjugates}, we may take the direct sum of the Galois conjugates of $\calE$ to obtain a lisse Weil $\overline{\QQ}_\ell$-sheaf $\calF$ which is $\QQ$-algebraic. In particular, 
for any algebraic embedding $\iota$ of $\overline{\QQ}_\ell$ into $\CC$, 
$\calF$ is $\iota$-real, and so we may apply 
\cite[Th\'eor\`eme~1.5.1]{deligne-weil2} to deduce that its constituents are all $\iota$-pure.
In particular $\calE$ is $\iota$-pure of some weight $w$; its determinant is then pure of weight $r w$,
but this forces $w=0$ because the determinant is of finite order.

\item[(ii)]
This is included in Theorem~\ref{T:companion dimension 1}.

\item[(iii)]
By Theorem~\ref{T:companion dimension 1}, $\calE$ admits an $\ell'$-adic companion where $\ell'$ is the rational prime below $\lambda$. Since this companion is irreducible with finite determinant, it must be a lisse 
 \'etale $\overline{\QQ}_{\ell'}$-sheaf  (see Definition~\ref{D:lisse etale}), and this proves the claim.
 
\item[(iv)]
By Theorem~\ref{T:companion dimension 1}, we may assume that $\ell \neq p$.
Applying \cite[Th\'eor\`eme~VII.2]{lafforgue} yields an upper bound of the desired form, except that the desired factor of $\frac{r}{2}$ is replaced by the larger factor $\frac{(r-1)^2}{r}$. 

On the other hand,
combining Theorem~\ref{T:abe correspondence} with \cite[Corollaire~2.2]{lafforgue-hecke} (which makes the calculation on the automorphic side) yields a bound with the factor $\frac{r-1}{2}$; moreover, it is shown that for $i=0,\dots,r$, the sum of the $i$ largest valuations is at most
$\frac{i(r-i)}{2}$ times the valuation of $\# \kappa(x)$. 
Another way to get the same bound is to apply Theorem~\ref{T:companion dimension 1} to reduce to the case $\ell = p$
and then invoke Proposition~\ref{P:uniform bound on slopes}; see \cite[Remark~1.3.6]{drinfeld-kedlaya}.

\item[(v, vi)]
These are both included in Theorem~\ref{T:companion dimension 1}.

\end{enumerate}

\subsection{Compatibility of ramification}

We now spell out in more detail what the compatibility assertion of Theorem~\ref{T:abe correspondence} says about the local behavior of companions. For this, we need to introduce local monodromy representations, particularly in the crystalline case.

\begin{defn}
For $x \in \overline{X}$, let $I_x$ denote the inertia group of $\overline{X}$ at $x$.
For $\calE$ a coefficient object, denote by $\rho_x(\calE)$ the \emph{semisimplified local monodromy representation} of $I_x$
associated to $\calE$. In the \'etale case, this is simply the semisimplification of the restriction of the associated Weil group representation; in the crystalline case, see \cite[Remark~4.12]{kedlaya-isocrystals}.
We need the following properties of the construction.
\begin{enumerate}[label=$\bullet$]
\item
$\calE$ is tame at $x$ if and only if $\rho_x(\calE)$ is at most 
tamely ramified. In particular, if we write $\Swan_x(\calE)$ for the Swan conductor of $\rho_x(\calE)$, then $\calE$ is tame at $x$ if and only if $\Swan_x(\calE) = 0$.
\item
$\calE$ is docile at $x$ if and only if $\rho_x(\calE)$ is unramified. In particular, this is true if $x \in X$.
\item
The construction of $\rho_x$ is functorial in the following sense: if $f: \overline{X}' \to \overline{X}$ is a finite flat morphism and $f(y) = x$, then $\rho_y(f^* \calE)$ is the restriction of $\rho_x(\calE)$ along $I_y\to I_x$.
\end{enumerate}
\end{defn}

For curves, the Euler characteristic is related to local monodromy via the Grothendieck--Ogg--Shafarevich formula.
\begin{prop} \label{P:GOS formula}
For any coefficient object $\calE$ on $X$, we have 
\[
\chi(\calE) = \chi(X) \rank(\calE) - \sum_{x \in Z} [\kappa(x)\colon\! k] \Swan_x(\calE) .
\]
\end{prop}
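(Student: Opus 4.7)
The plan is to reduce both the \'etale and crystalline cases to the classical Grothendieck-Ogg-Shafarevich theorem in \'etale cohomology by exploiting the existence of companions on curves (Theorem~\ref{T:companion dimension 1}) together with the compatibility of companions with local Euler and $\epsilon$-factors built into Theorem~\ref{T:abe correspondence}.

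First, I would reduce to the case where $\calE$ is irreducible with determinant of finite order. Both sides of the asserted identity are additive on short exact sequences---$\chi$ by the usual long exact sequence, the right-hand side by inspection---so replacing $\calE$ by the associated graded of a Jordan-H\"older filtration reduces to $\calE$ irreducible. Lemma~\ref{L:decompose to finite order} then supplies a constant rank-one object $\calL$ with $\det(\calE\otimes\calL)$ of finite order. A constant twist does not change the rank, and it does not change the Swan conductors at the boundary (since $\calL$ is unramified on all of $\overline{X}$), so the right-hand side of the formula is invariant under this twist; it also leaves $\chi$ invariant, because tensoring with a constant rank-one object only alters the Frobenius action on each cohomology space without affecting its dimension.

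Next, I would transport the statement from $\calE$ to an \'etale companion $\calE'$ supplied by Theorem~\ref{T:companion dimension 1} for some chosen $\ell \neq p$ and embedding of $\overline{\QQ}$ into $\overline{\QQ}_\ell$. Equality of ranks for $\calE$ and $\calE'$ is built into the definition of companions, and equality of Euler characteristics is part of Definition~\ref{D:L-function}, since companions share a common $L$-function and~\eqref{eq:lefschetz} identifies $\chi(\calE)$ with the order of vanishing of $L(\calE^\dual,T)$ at $T=\infty$. The crux is the equality of Swan conductors at each $x\in Z$: the compatibility assertion of Theorem~\ref{T:abe correspondence} ensures that companions have matching local Euler factors and matching local $\epsilon$-factors at $x$, and from this pair of data one recovers the dimension $\rank\rho_x(\calE)^{I_x}$ from the Euler factor and the Artin conductor $a_x = \Swan_x(\calE)+\rank(\calE)-\rank\rho_x(\calE)^{I_x}$ from the $\epsilon$-factor (with respect to any fixed additive character), hence recovers $\Swan_x(\calE)$ itself. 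Applying the classical Grothendieck-Ogg-Shafarevich formula to $\calE'$ now yields the statement for $\calE$.

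The main obstacle in this strategy is conceptual rather than computational: it is the assertion that the crystalline local $\epsilon$-factor of Abe-Marmora encodes the Swan conductor in exactly the same way as its \'etale counterpart, which is substantive content folded into Theorem~\ref{T:abe correspondence}. An alternative avoiding companions would be to prove the crystalline case directly via the Christol-Mebkhout $p$-adic index formula, but that route requires substantially more foundational material; the companion strategy outlined here is both lighter and more in keeping with the spirit of this paper.
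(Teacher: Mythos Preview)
Your argument is correct, but it takes a genuinely different route from the paper. The paper's proof is a bare citation: the \'etale case is Grothendieck's original theorem, and the crystalline case is proved directly in \cite[Theorem~4.3.1]{kedlaya-weil2} via $p$-adic index-type methods (essentially the Christol--Mebkhout alternative you mention at the end). No companions are invoked.

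Your reduction is clever and in keeping with the paper's philosophy of transferring statements across categories, but note two trade-offs. First, it is heavier: you are invoking the full Langlands correspondence plus the Abe--Marmora $\epsilon$-factor theory to recover what is, on the crystalline side, a foundational cohomological identity provable by local $p$-adic analysis. Second, and more structurally, the paper runs the logic in the opposite direction: Proposition~\ref{P:GOS formula} is used as input to prove that companions have matching Swan conductors (Corollary~\ref{C:tame compatibility} and the subsequent unnamed proposition), whereas you are using a version of that matching (via $\epsilon$-factors) to deduce the formula. Within the paper's black-box treatment of Theorem~\ref{T:abe correspondence} there is no formal circularity, but at the level of the underlying references Abe's proof of the crystalline Langlands correspondence already rests on the crystalline cohomological machinery that includes this formula, so your route is not genuinely independent of the direct proof.
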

\begin{proof}
See \cite{grothendieck-gos} in the \'etale case and 
\cite[Theorem~4.3.1]{kedlaya-weil2} in the crystalline case.
\end{proof}
\begin{cor} \label{C:tame compatibility}
Let $\calE$ and $\calF$ be coefficient objects on $X$ $($possibly in different categories$)$ which are companions.
Then
$\calE$ is tame $($resp.\ docile$)$ if and only if $\calF$ is.
$($By Lemma~\ref{L:extend tame}, this remains true even without  the same conclusion holds without Hypothesis~\ref{H:curve}, i.e., allowing $X$ to be smooth but not necessarily one-dimensional.$)$
\end{cor}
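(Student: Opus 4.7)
I will first handle the tame case via a clean application of Proposition~\ref{P:GOS formula}. Two companion coefficient objects share the same $L$-function (Definition~\ref{D:L-function}), hence the same Euler characteristic, and their ranks match as $\rank(\calE) = \deg P(\calE_x, T) = \deg P(\calF_x, T) = \rank(\calF)$ for any $x \in X^\circ$. Subtracting the two Grothendieck--Ogg--Shafarevich identities then yields
\[
\sum_{x \in Z} [\kappa(x)\colon\! k]\, \Swan_x(\calE) = \sum_{x \in Z} [\kappa(x)\colon\! k]\, \Swan_x(\calF).
\]
Each Swan conductor is a non-negative integer and each $[\kappa(x)\colon\! k]$ is positive, so if $\calE$ is tame (every left-hand summand vanishes), each right-hand summand must vanish, proving $\calF$ is tame; the argument is symmetric.

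The docile case is more delicate, and is the main obstacle I anticipate. My plan is to reduce to the setting of Theorem~\ref{T:abe correspondence}, namely $\calE$ and $\calF$ irreducible with finite determinant. Tameness and docility are preserved under subquotients and extensions (Remark~\ref{R:tame in codim 1}); docility is insensitive to semisimplification (unipotent monodromy has trivial semisimplification, and conversely a representation whose semisimplification is trivial is itself unipotent), and the companion relation only depends on Frobenius characteristic polynomials, so is preserved by semisimplification. Hence I may assume $\calE, \calF$ are semisimple. Using uniqueness of companions for irreducibles of finite determinant (Theorem~\ref{T:companion dimension 1}), extended to general irreducibles by twisting, I may pair irreducible constituents bijectively and handle each pair separately, reducing to $\calE, \calF$ irreducible. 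Lemma~\ref{L:decompose to finite order} together with Corollary~\ref{C:finite order companion} then provides compatible rank-one constant companions, which are unramified and thus preserve both tameness and docility under twisting, yielding the finite-determinant condition.

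In this reduced situation, Theorem~\ref{T:abe correspondence} identifies $\calE$ and $\calF$ with the same cuspidal automorphic representation of $\GL_r(\AAA_{k(X)})$, and in particular equates their local Euler factors at every $x \in Z$. For a tame coefficient object at $x$, the local Euler factor is the reverse characteristic polynomial of $F_x$ acting on the inertia invariants $V_x^{I_x}$ of the semisimplified local monodromy $\rho_x$, viewed as a polynomial in $T^{[\kappa(x)\colon\! k]}$ (in the spirit of Remark~\ref{R:local Euler factors}); its degree therefore recovers $\dim V_x^{I_x}$. Since $\calE$ is docile at $x$ precisely when $\dim V_x^{I_x} = \rank(\calE)$, and both ranks and local Euler factor degrees match for $\calE$ and $\calF$, docility at $x$ transfers between them, completing the proof. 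The crucial external input beyond GOS is the Langlands correspondence for curves, which is what transports boundary ramification data between categories.
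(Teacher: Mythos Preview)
Your treatment of the tame case is correct and matches the paper's argument exactly: equality of $L$-functions and ranks, combined with the Grothendieck--Ogg--Shafarevich formula, forces the weighted sums of Swan conductors to agree, hence to vanish together.

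Your docile argument, however, has a genuine gap in its final step. You assert that for a tame object, the degree of the local Euler factor at $x$ equals the dimension of the inertia invariants of the \emph{semisimplified} local monodromy $\rho_x$, and hence that docility is equivalent to this degree being $\rank(\calE)$. This is not correct: the local Euler factor is computed from the \emph{actual} object, not from its local semisimplification. In the \'etale case the degree is $\dim V^{I_x}$ for the original representation $V$; in the crystalline docile case (Remark~\ref{R:local Euler factors}) it is $\dim\ker N$ for the residue operator $N$. A docile object with nontrivial unipotent monodromy---say a single Jordan block of size $r\ge 2$---has Euler factor degree $1$, not $r$. Thus ``Euler factor degree $=\rank$'' detects \emph{trivial} local monodromy, not docility, and your criterion cannot distinguish a docile object with $N\ne 0$ from a tame non-docile one whose $\ker(g-1)$ happens to have the same dimension.

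The paper repairs this by bringing in the local Euler factor of $\calE^\dual\otimes\calE$. Writing $d_1,d_2$ for the Euler factor degrees of $\calE,\calE^\dual$ at $x$ (which match those of $\calF,\calF^\dual$), the paper observes that the Euler factor degree of $\calE^\dual\otimes\calE$ satisfies a numerical inequality involving $d_1,d_2,r$ with equality precisely when $\calE$ is docile at $x$; since all of these degrees are shared with $\calF$, docility transfers. The key missing idea in your proposal is that one must pass to this tensor product to separate the unipotent-monodromy contribution from contributions of other eigenvalues of tame inertia.
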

\begin{proof}
From Definition~\ref{D:L-function} and Proposition~\ref{P:GOS formula}, we can read off the (weighted) sum of Swan conductors of $\calE$ from $L(\calE^\dual, T)$; in particular, the $L$-function determines whether or not this sum is zero, and hence whether or not $\calE$ is tame. Since the duals of companions have matching $L$-functions, this proves the claim in the tame case.

In the docile case, we may assume at once that both $\calE$ and $\calF$ are tame;
we may also assume that they are both irreducible with determinant of finite order. 
For each $x \in \overline{X}$, by Theorem~\ref{T:abe correspondence} the local Euler factors of $\calE$ and $\calF$ at $x$ coincide. The degree of the local Euler factor computes the dimension of the kernel of either the full local monodromy representation in the \'etale case, or the monodromy operator in the crystalline case (see Remark~\ref{R:local Euler factors}). Let $d_1$ denote this dimension, let $d_2$ denote the corresponding dimension for $\calE^\dual$,
and let $r$ denote the common rank of $\calE$ and $\calF$. Then the degree of the local Euler factor of $\calE^\dual \otimes \calE$ at $x$ is at least $d_1r + d_2r - d_1 d_2$, with equality if and only if $\calE$ is docile.
(This quantity computes the contribution of the generalized eigenspace for the eigenvalue 1; any other eigenvalue that occurs makes a positive contribution.) By applying the same logic with $\calE$ replaced by $\calF$, we deduce the claim.
\end{proof}

We next formally promote Corollary~\ref{C:tame compatibility} to a statement about ramification at individual points.

\begin{lemma} \label{L:modify wild ramification}
Let $f: \overline{X}' \to \overline{X}$ be a finite flat morphism which is \'etale over $X$,
and choose $x \in Z$. Then there exists a finite flat morphism $g: \overline{X}'' \to \overline{X}$ with the following properties.
\begin{enumerate}
\item[(i)]
$g$ is \'etale over $x$.
\item[(ii)]
For each $x' \in Z \setminus \{x\}$, write $Z_{x'}$ for the formal completion of $\overline{X}$ along $x'$; then the base extension $\overline{X}'' \times_{\overline{X}} Z_{x'} \to Z_{x'}$ of $g$
factors through the base extension $\overline{X}' \times_{\overline{X}} Z_{x'} \to Z_{x'}$ of $f$.
\end{enumerate}
$($Note that we do not require $g$ to be \'etale over $X.)$
\end{lemma}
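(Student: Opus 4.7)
\medskip
\noindent\textbf{Proof proposal.}
The plan is to realize $g$ as the normalization of $\overline{X}$ in a finite separable $K$-algebra $L' = K[T]/(Q)$, where $K = k(\overline{X})$ and $Q$ is chosen to match the local behavior of $f$ at the places in $Z \setminus \{x\}$ while forcing unramifiedness at $x$. The two ingredients I would use are weak approximation for the global function field $K$ and Krasner's lemma.

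First, encode $f$ algebraically: the generic fiber of $f_*\calO_{\overline{X}'}$ is a finite \'etale $K$-algebra $L$ of rank $n := \deg f$ (using that $f$ is \'etale over $X$), and by the primitive element theorem (applicable to separable commutative algebras over the infinite field $K$) it admits a presentation $L = K[T]/(P)$ for a separable monic $P \in K[T]$ of degree $n$. Next, prescribe local target polynomials at each point of $Z$: at $v = x$, choose $Q_x(T) := \prod_{i=1}^n (T - a_i)$ for $n$ distinct units $a_i \in \widehat{\calO}_x^\times$, so that $K_x[T]/(Q_x) \cong K_x^n$ (totally split, hence unramified); at each $v = x' \in Z \setminus \{x\}$, simply set $Q_{x'} := P$ viewed in $K_{x'}[T]$. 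Now apply weak approximation for $K$ coefficient-by-coefficient to monic polynomials of degree $n$ to produce a single $Q \in K[T]$ whose coefficients approximate those of $Q_v$ in $K_v$ for every $v \in Z$ to any desired precision.

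For sufficiently close approximation, Krasner's lemma at each $v \in Z$ gives a $K_v$-algebra isomorphism $K_v[T]/(Q) \cong K_v[T]/(Q_v)$. Set $L' := K[T]/(Q)$, which is then a separable $K$-algebra, and take $\overline{X}''$ to be the normalization of $\overline{X}$ in $L'$; since $\overline{X}$ is a smooth (Dedekind) curve and $L'/K$ is finite separable, the induced $g\colon \overline{X}'' \to \overline{X}$ is finite locally free, hence finite flat. Property~(i) follows from $L' \otimes_K K_x \cong K_x^n$, whose integral closure over $\widehat{\calO}_x$ is $\widehat{\calO}_x^n$, so $\overline{X}'' \times_{\overline{X}} Z_x \to Z_x$ is \'etale. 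Property~(ii) follows from $L' \otimes_K K_{x'} \cong L \otimes_K K_{x'}$, which yields an isomorphism $\overline{X}'' \times_{\overline{X}} Z_{x'} \cong \overline{X}' \times_{\overline{X}} Z_{x'}$ of $Z_{x'}$-schemes; composing this isomorphism with $f \times_{\overline{X}} Z_{x'}$ recovers $g \times_{\overline{X}} Z_{x'}$, giving the required factorization.

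The only genuine worry is bookkeeping: Krasner's lemma demands an approximation precision depending on the minimum root separation of each $Q_v$ in $\overline{K}_v$. Since $Z$ is finite and the required precision at each $v$ is a fixed positive quantity, weak approximation supplies a single $Q$ satisfying all the constraints simultaneously, so this is not a real obstacle. Note that one makes no attempt to control the behavior of $g$ over $X$ itself, which is permitted by the statement.
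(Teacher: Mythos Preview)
Your argument is correct and takes a genuinely different route from the paper's own proof. The paper first reduces to handling a single $x' \in Z \setminus \{x\}$ (taking a fiber product of the resulting covers at the end), then chooses a finite flat map $\overline{X} \to \PP^1_k$ sending $x \mapsto 1$ and $x' \mapsto 0$, and invokes the Katz--Gabber canonical extension theorem to realize the local algebra of $f$ at $x'$ (viewed over $k((t))$) as the completion at $0$ of a finite flat cover of $\PP^1_k$ which is \'etale away from $0,\infty$ and tame at $\infty$; pulling this back to $\overline{X}$ gives the required $g_{x'}$.

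By contrast, you work entirely on the function field side: a primitive element for the generic fiber of $f$, weak approximation to interpolate the coefficients of a monic polynomial with prescribed local behavior at every $v \in Z$ simultaneously, and Krasner's lemma to identify the resulting local algebras. Your approach is more elementary (no Katz--Gabber, no auxiliary map to $\PP^1$, no fiber product over $Z \setminus \{x\}$) and in fact yields a stronger conclusion at each $x'$, namely that $g \times_{\overline{X}} Z_{x'}$ is \emph{isomorphic} to $f \times_{\overline{X}} Z_{x'}$ rather than merely factoring through it. The paper's route, on the other hand, produces covers with tightly controlled ramification away from $Z$ (each $g_{x'}$ is \'etale wherever $\overline{X} \to \PP^1$ is \'etale over $\PP^1 \setminus \{0,\infty\}$), though this extra control is not used in the application.
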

\begin{proof}
It suffices to achieve (i) and (ii) for a single $x'$, as we may then take a fiber product of the resulting covers to achieve the desired result. For this, first choose a finite flat morphism from $\overline{X}$ to $\PP^1_k$ carrying $x$ and $x'$ to $1$ and $0$, respectively. We may then achieve the result using the Katz--Gabber canonical extension theorem \cite{katz-local}: any extension of $k((t))$ can be uniquely realized as the base extension of a finite flat cover of $\PP^1_k$ which is tamely ramified at $t = \infty$ and \'etale away from $t=0,\infty$. 
\end{proof}

\begin{prop} \label{P:compare tame docile}
Let $\calE$ and $\calF$ be coefficient objects on $X$ $($possibly in different categories$\,)$ which are companions.
Then for $x \in Z$, the following statements hold.
\begin{enumerate}
\item[(a)]
We have $\Swan_x(\calE) = \Swan_x(\calF)$. In particular, $\calE$ is tame at $x$ if and only if $\calF$ is.
\item[(b)]
$\calE$ is docile at $x$ if and only if $\calF$ is.
\end{enumerate}
\end{prop}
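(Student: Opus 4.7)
The plan is to use a finite flat cover of $\overline{X}$ that is étale at $x$ but absorbs the non-docile ramification of both $\calE$ and $\calF$ at every other boundary point, then reduce to global statements on this cover.

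To set this up, apply Proposition~\ref{P:alter to tame} first to $\calE$ and then to $\calF$ (pulled back along the first alteration), composing to obtain an alteration $f: X' \to X$ whose smooth compactification gives a finite flat $\overline{f}: \overline{X}' \to \overline{X}$ along which both $\overline{f}^* \calE$ and $\overline{f}^* \calF$ are docile at every point of $\overline{X}' \setminus X'$. Invoking Lemma~\ref{L:modify wild ramification} with $\overline{f}$ at the fixed point $x$, iterated over $Z \setminus \{x\}$ via fiber products, then yields $g: \overline{X}'' \to \overline{X}$ which is étale at $x$ and whose base change to each formal completion $Z_{x'}$ factors through that of $\overline{f}$. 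Consequently $g^* \calE$ and $g^* \calF$ are docile at every point of $\overline{X}'' \setminus X''$ above $Z \setminus \{x\}$, while at each $y \in g^{-1}(x)$ étaleness gives $I_y \cong I_x$, so $\rho_y(g^*\calE) \cong \rho_x(\calE)$ (and similarly for $\calF$); in particular $\Swan_y(g^*\calE) = \Swan_x(\calE)$, and $g^*\calE$ is docile at $y$ if and only if $\calE$ is docile at $x$.

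For (a), apply the Grothendieck--Ogg--Shafarevich formula (Proposition~\ref{P:GOS formula}) to $g^*\calE$ and $g^*\calF$ on $X'' := g^{-1}(X)$. Both objects have the same rank, and their Euler characteristics agree since companions share their $L$-function (Definition~\ref{D:L-function}); on the right-hand side, Swan conductors vanish away from $g^{-1}(x)$. Subtracting the two identities yields
\[
\Bigl(\sum_{y \in g^{-1}(x)} [\kappa(y)\colon\! k]\Bigr) \bigl(\Swan_x(\calE) - \Swan_x(\calF)\bigr) = 0,
\]
and positivity of the coefficient forces $\Swan_x(\calE) = \Swan_x(\calF)$.

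For (b), part (a) reduces us to the case when $\calE$ and $\calF$ are both tame at $x$, so that $g^*\calE$ and $g^*\calF$ are globally tame on $X''$. It then suffices to match docility of $g^*\calE$ and $g^*\calF$ at each $y \in g^{-1}(x)$. This is the pointwise content extracted from the proof of Corollary~\ref{C:tame compatibility}: reduce first to irreducibles of finite determinant via Lemmas~\ref{L:decompose to finite order} and~\ref{L:decompose to finite order2} (using Remark~\ref{R:tame in codim 1} to see that docility at $y$ is preserved under subquotients and extensions, and that constant rank-$1$ twists do not alter local monodromy at the boundary); apply Theorem~\ref{T:abe correspondence} to match local Euler factors at $y$; then invoke the dimension-of-kernel-of-monodromy computation to conclude that $g^*\calE$ is docile at $y$ if and only if $g^*\calF$ is. The main technical obstacle is to verify that after the reduction, the companion relation between $\calE$ and $\calF$ passes to a pairing of irreducible-finite-determinant constituents to which Theorem~\ref{T:abe correspondence} can be applied; this requires Theorem~\ref{T:companion dimension 1} together with the observation that constant rank-$1$ twists of companions admit constant companions.
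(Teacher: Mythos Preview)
Your overall strategy matches the paper's: isolate the point $x$ via Lemma~\ref{L:modify wild ramification}, then reduce to the global statements (Grothendieck--Ogg--Shafarevich for~(a), Corollary~\ref{C:tame compatibility} for~(b)).  There are two points where your execution diverges from the paper's, one of which is a genuine (if easily fixable) gap.

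\textbf{Construction of the cover $f$.}  You obtain $f$ by composing two applications of Proposition~\ref{P:alter to tame}.  On a curve this does give a finite morphism $\overline{f}\colon \overline{X}'\to\overline{X}$, but an alteration is only \emph{generically} \'etale: $\overline{f}$ may be ramified at finitely many points of $X$, so the hypothesis ``\'etale over $X$'' in Lemma~\ref{L:modify wild ramification} is not verified.  The fix is easy---shrink $X$ to the open locus where $\overline{f}$ is \'etale; since $\calE$ and $\calF$ are already coefficient objects on $X$, they are docile at the deleted points, so nothing changes at $x\in Z$---but you did not do this.  The paper sidesteps the issue entirely: it passes to \'etale companions via Corollary~\ref{C:algebraic companion}, applies Remark~\ref{R:tamely} (which in the \'etale case produces a finite \emph{\'etale} cover of $X$), and then uses Corollary~\ref{C:tame compatibility} to transfer docility back to the original $\calE,\calF$.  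This is a cleaner route and avoids any hypothesis mismatch.

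\textbf{Part (b).}  You attempt to extract a pointwise version of the docility argument from the proof of Corollary~\ref{C:tame compatibility}, which forces you into a somewhat delicate reduction to irreducibles of finite determinant and the alignment of constituents across the companion relation.  This is unnecessary.  Once you know (from part~(a)) that $g^*\calE$ and $g^*\calF$ are globally tame on $X''$, and that they are already docile at every boundary point not lying over $x$, the equivalence ``$\calE$ docile at $x$'' $\Longleftrightarrow$ ``$g^*\calE$ globally docile'' is immediate; the paper then simply invokes Corollary~\ref{C:tame compatibility} as a black box for $g^*\calE$ and $g^*\calF$.  Your approach is not wrong, but it redoes work that the corollary has already packaged.
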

\begin{proof}
Apply Remark~\ref{R:tamely} to construct a finite flat morphism
$f: \overline{X}' \to \overline{X}$ which is \'etale over $X$ 
such that both $f^* \calE$ and $f^* \calF$ are docile.
(More precisely, by Corollary~\ref{C:algebraic companion} and
Corollary~\ref{C:tame compatibility} we may do this assuming that both $\calE$ and $\calF$ are \'etale coefficient objects, and then Remark~\ref{R:tamely} applies.)
Define another morphism $g: \overline{X}' \to \overline{X}$ as in Lemma~\ref{L:modify wild ramification}.
Then $g^* \calE$ and $g^* \calF$ are both tame at every point of $\overline{X}'$ not lying over $x$;
meanwhile, for $y \in g^{-1}(x)$, we have $\Swan_y(g^*\calE) = \Swan_x(\calE)$ and $\Swan_y(g^*\calF) = \Swan_x(\calF)$. We may thus argue using Proposition~\ref{P:GOS formula} (as in the proof of Corollary~\ref{C:tame compatibility}) to deduce (a). By a similar argument, we may reduce (b) to Corollary~\ref{C:tame compatibility}.
\end{proof}

\begin{cor} \label{C:finiteness curve2}
For any positive integer $r$ and any category of coefficient objects on $X$, there are only finitely many
isomorphism classes of irreducible tame objects of rank $r$ up to twisting by constant objects of rank $1$.
\end{cor}
\begin{proof}
By Lemma~\ref{L:decompose to finite order}, the problem does not change if we consider only objects with determinant of finite order. 
By Theorem~\ref{T:companion dimension 1} and Proposition~\ref{P:compare tame docile}, we need only consider the \'etale case.
In this case, an irreducible coefficient object of rank $r$ corresponds via Theorem~\ref{T:companion dimension 1}
to an automorphic representation.
If the original coefficient object is tame, then by local-global compatibility, the resulting automorphic representation has Iwahori level,
which in particular depends only on $r$; hence \cite[Theorem~1.2.1]{harder} again applies.
\end{proof}

\section{Higher-dimensional varieties}

In this section, we  prove Theorem~\ref{T:deligne conj2}, thus resolving Conjecture~\ref{conj:deligne}(i)--(v)
for general $X$.
As noted in the introduction, this includes some duplication of results of Abe--Esnault \cite{abe-esnault}; we comment at the end of the section on differences between the two approaches (see \S\ref{sec:abe-esnault}).

\begin{convention}
We will frequently do the following: pick some closed points $x_1,\dots,x_m \in X^\circ$, choose a positive integer $n$, and choose a ``curve in $X_n$ containing $x_1,\dots,x_m$.'' By this, we mean a curve (over $k_n$) in $X_n$  containing some points lying over $x_1,\dots,x_m$; we remind the reader that we do not insist that curves be proper,
so the existence of such a curve is elementary (that is, it does not depend on anything as difficult as Poonen's Bertini theorem \cite{poonen}).
\end{convention}

\subsection{Weights}
\label{subsec:weight filtration}

We first address part (i) of Conjecture~\ref{conj:deligne} by consolidating Deligne's theory of weights from \cite{deligne-weil2}
using the Langlands correspondence in the form of Theorem~\ref{T:one-dim case}.
In the process, we will make a first use of the geometric setup that will recur in the discussion of Lefschetz slicing
(\S\ref{sec:slicing}).

\begin{hypothesis}
Throughout \S\ref{subsec:weight filtration},
fix a coefficient object $\calE$ on $X$,
and let $\iota$ be an embedding of the full coefficient field of $\calE$ into $\CC$.
(This choice is for expediency only; see Remark~\ref{R:algebraic embedding}.)
Also, let $q$ denote the cardinality of $k$.
\end{hypothesis}

\begin{defn}
For $x \in X^\circ$, the \emph{multiset of $\iota$-weights of $\calE$ at $x$}
is the multiset consisting of $-2 \log_{\#\kappa(x)} \left| \iota(\lambda) \right|$ as $\lambda$ varies over the roots of $P(\calE_x, T)$ (counted with multiplicity). Recall that these roots are reciprocals of eigenvalues of Frobenius.

We say that $\calE$ is \emph{$\iota$-pure of weight $w$} if for all $x \in X^\circ$, the multiset of $\iota$-weights of $\calE$ at $x$ consists of the single element $w$. If the determinant of $\calE$ is of finite order, we must then have $w=0$.
\end{defn}

We will make crucial use of the following form of ``Weil II''.
\begin{lemma} \label{L:Weil II}
Suppose that $\calE$ is $\iota$-pure of weight $w$. Then the eigenvalues of the Frobenius $\varphi$ on $H^1(X, \calE)$ all have $\iota$-absolute value at least $q^{(w+1)/2}$.
\end{lemma}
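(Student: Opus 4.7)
The cleanest approach is to reduce to the compactly supported form of Weil II via Poincar\'e duality. Set $d = \dim X$. Since $\calE$ is $\iota$-pure of weight $w$, the dual $\calE^\dual$ is $\iota$-pure of weight $-w$, and the Tate twist $\calE^\dual(d)$ is $\iota$-pure of weight $-w-2d$. Poincar\'e duality---$\ell$-adic \'etale Poincar\'e duality in the \'etale case, rigid Poincar\'e duality with overconvergent $F$-isocrystal coefficients in the crystalline case (see \cite{kedlaya-isocrystals})---produces a perfect Frobenius-equivariant pairing
\[
H^1(X,\calE) \times H^{2d-1}_c(X,\calE^\dual(d)) \longrightarrow H^{2d}_c(X,\overline{\QQ}_*(d)) \cong \overline{\QQ}_*.
\]
Consequently, the Frobenius eigenvalues on $H^1(X,\calE)$ are precisely the reciprocals of those on $H^{2d-1}_c(X,\calE^\dual(d))$.

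Next I would invoke the weight bound for compactly supported cohomology: for any coefficient object $\calF$ that is $\iota$-pure of weight $w'$, the Frobenius eigenvalues on $H^j_c(X,\calF)$ have $\iota$-absolute value at most $q^{(w'+j)/2}$. In the \'etale case this is Deligne's Weil II \cite{deligne-weil2}; in the crystalline case it is the main result of \cite{kedlaya-weil2}, already invoked in this paper for the Grothendieck--Ogg--Shafarevich formula (Proposition~\ref{P:GOS formula}). Specializing to $\calF = \calE^\dual(d)$ with $w' = -w - 2d$ and $j = 2d-1$ yields an upper bound of $q^{((-w-2d)+(2d-1))/2} = q^{-(w+1)/2}$ on the eigenvalues of $H^{2d-1}_c(X,\calE^\dual(d))$; inverting gives the stated lower bound $q^{(w+1)/2}$.

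The real work is not in this deduction---which is essentially formal---but in the compactly supported Weil II bound itself, especially in the crystalline setting. That theorem ultimately rests on the curves case (available via the Langlands correspondence, Theorem~\ref{T:one-dim case}) combined with fibration and alteration arguments in the style of Deligne's original proof. Since both the \'etale and crystalline versions can be cited as black boxes here, the remaining obligations are purely bookkeeping: verifying the purity weight of the twisted dual, confirming that Frobenius on the Tate twist is scaled as expected, and checking that the cited Poincar\'e duality produces a genuinely Frobenius-equivariant pairing in both settings.
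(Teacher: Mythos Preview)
Your argument is correct and is precisely the standard Poincar\'e duality reduction that underlies the statements the paper cites. The paper's own proof is simply a reference to \cite[Th\'eor\`eme~3.3.1]{deligne-weil2} (\'etale case) and \cite[Theorem~10.3]{kedlaya-isocrystals} (crystalline case); what you have written is essentially the derivation of the lower bound on $H^1$ from the upper bound on $H^{2d-1}_c$, which is how those cited results are themselves organized. So there is no genuine difference in approach---you have just unpacked one layer of the black box.

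One small correction of attribution: the citation of \cite{kedlaya-weil2} in Proposition~\ref{P:GOS formula} is for the Grothendieck--Ogg--Shafarevich formula, not for the weight bound; the crystalline Weil~II estimate you need is the content of \cite[Theorem~10.3]{kedlaya-isocrystals} (or equivalently the main theorem of \cite{kedlaya-weil2}), which you do cite correctly elsewhere in your sketch.
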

\begin{proof}
See \cite[Th\'eor\`eme~3.3.1]{deligne-weil2} in the \'etale case and 
\cite[Theorem~10.3]{kedlaya-isocrystals} in the crystalline case.
\end{proof}

As in \cite[Th\'eor\`eme~3.4.1(3)]{deligne-weil2}, we have the following corollary.
(See also \cite[Corollary~3.5.2]{daddezio}.)
\begin{cor} \label{C:geometrically semisimple}
Suppose that $\calE$ is $\iota$-pure.
\begin{enumerate}
\item[(a)]
In the category of geometric coefficient objects, $\calE$ is semisimple.
\item[(b)]
In the category of coefficient objects, $\calE$ admits an \emph{isotypical decomposition}: a direct sum decomposition in which each summand is a successive extension of a single irreducible object.
\end{enumerate}
\end{cor}
\begin{proof}
We prove both assertions at once by induction on $\rank(\calE)$.
Suppose first that $\calE$ is irreducible. Let $\calF$ be the sum of all irreducible subobjects of $\calE$ in the category of geometric coefficient objects; then $\calF$ is stable under Frobenius, and hence a subobject of $\calE$ in the category of coefficient objects. It follows that $\calE = \calF$, proving (a); meanwhile, (b) is vacuous.

Suppose next that $\calE$ is reducible. Then there exists
an exact sequence
\begin{equation} \label{eq:sequence coefficient objects}
0 \to \calE_1 \to \calE \to \calE_2 \to 0
\end{equation}
of coefficient objects in which $\calE_1$ is irreducible and $\calE_2$ is nonzero.
If this sequence splits, then we may apply the induction hypothesis to $\calE_2$ to deduce both (a) and (b). Otherwise, from the Hochschild--Serre spectral sequence (compare
\cite[Lemme~3.4.2]{deligne-weil2} in the \'etale case), we obtain an exact sequence
\begin{equation} \label{eq:sequence coefficient objects2}
H^0(X, \calE_2^\dual \otimes \calE_1)_{\varphi} \to \Ext(\calE_2, \calE_1)
\to H^1(X, \calE_2^\dual \otimes \calE_1)^{\varphi}
\end{equation}
where the $\Ext$ group is defined in the category of coefficient objects.
Since $\calE_2^\dual \otimes \calE_1$ is $\iota$-pure of weight $0$, the term on the right vanishes by Lemma~\ref{L:Weil II}.
It follows that the extension \eqref{eq:sequence coefficient objects} splits in the category of geometric coefficient objects,
so we may deduce (a) from the induction hypothesis.
Meanwhile, the nonvanishing of the middle term of \eqref{eq:sequence coefficient objects2} implies that the finite-dimensional vector space $H^0(X, \calE_2^\dual \otimes \calE_1)$ has nonzero $\varphi$-coinvariants and hence nonzero $\varphi$-invariants; that is, there is a nonzero homomorphism $\calE_2 \to \calE_1$
in the category of coefficient objects (not just geometric coefficient objects).
By the induction hypothesis, $\calE_2$ admits an isotypical decomposition, which must admit a nonzero summand corresponding to $\calE_1$. Let $\calE'_1$ be the preimage of this summand in $\calE$; then the same argument applied to the exact sequence
\[
0 \to \calE'_1 \to \calE \to \calE'_2 \to 0
\]
shows that this sequence must split. Since $\calE'_1$ is isotypical, we may apply the induction hypothesis to $\calE'_2$ to deduce (b).
\end{proof}

The following lemma will occur in passing in the construction of the weight filtration, but will be used
much more heavily in the study of uniqueness of companions (\S\ref{subsec:Chebotarev density}).
\begin{lemma} \label{L:pole order}
Suppose that $X$ is irreducible of pure dimension $d$ and that  $\calE$ is
$\iota$-pure of weight $0$. Then
the pole order of $L(\calE, T)$ at $T = q^{-d}$ equals the multiplicity of $1$ as an eigenvalue of Frobenius on $H^0(X, \calE^\dual)$ $($viewed as a vector space over the full coefficient field$)$.
\end{lemma}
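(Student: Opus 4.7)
The plan is to combine the Lefschetz trace formula \eqref{eq:lefschetz} with Deligne's weight theorem and the geometric semisimplicity from Corollary~\ref{C:geometrically semisimple}. I will rewrite \eqref{eq:lefschetz} in the equivalent compactly supported form
\[
L(\calE,T) \;=\; \prod_{i=0}^{2d} \det\bigl(1 - FT,\ H^i_c(X,\calE)\bigr)^{(-1)^{i+1}},
\]
which is related to \eqref{eq:lefschetz} by Poincar\'e duality $H^i_c(X,\calE) \cong H^{2d-i}(X,\calE^\dual)^{\dual}(-d)$. Reading off the order of vanishing at $T = q^{-d}$, the pole order of $L(\calE,T)$ will equal $\sum_i (-1)^i m_i$, where $m_i$ denotes the algebraic multiplicity of the Frobenius eigenvalue $q^d$ on $H^i_c(X,\calE)$.

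Next I will invoke the mixed weight theorem that underlies Lemma~\ref{L:Weil II} (the full form of Deligne's Weil~II, and its crystalline analogue cited there): for $\calE$ $\iota$-pure of weight $0$, every Frobenius eigenvalue on $H^j_c(X,\calE)$ has $\iota$-absolute value at most $q^{j/2}$. Since $q^d > q^{j/2}$ for $j < 2d$, the value $q^d$ can occur only on $H^{2d}_c$, so the pole order collapses to $m_{2d}$. Via the duality $H^{2d}_c(X,\calE) \cong H^0(X,\calE^\dual)^{\dual}(-d)$, this $m_{2d}$ equals the algebraic multiplicity of $1$ as an eigenvalue of Frobenius on $H^0(X,\calE^\dual)$.

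To compute this multiplicity and identify it with $\dim H^0(X,\calE^\dual)^F$, I will apply Corollary~\ref{C:geometrically semisimple} to decompose $\calE \cong \bigoplus_j \calE_j^{n_j}$ isotypically, with the $\calE_j$ pairwise non-isomorphic irreducible coefficient objects, so that
\[
H^0(X,\calE^\dual) \;\cong\; \bigoplus_j n_j\, H^0(X,\calE_j^\dual).
\]
A Schur-type argument (an irreducible coefficient object admitting a nonzero $\pi_1$-invariant in its dual must itself be geometrically constant, hence of rank one by simplicity, using the Clifford-type description in Remark~\ref{R:absolutely irreducible}) shows that $H^0(X,\calE_j^\dual)$ vanishes unless $\calE_j$ is a constant rank-$1$ coefficient object given by some character $\alpha_j$ of $W_k$, in which case $H^0(X,\calE_j^\dual)$ is one-dimensional with Frobenius acting by $\alpha_j^{-1}$. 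Since distinct simple $\calE_j$ correspond to distinct characters $\alpha_j$, Frobenius will act diagonally on $H^0(X,\calE^\dual)$; the eigenvalue $1$ then occurs with multiplicity equal to the multiplicity $n_0$ of the trivial coefficient object in $\calE$, which in turn equals $\dim \Hom(\calE,\overline{\QQ}_\ell) = \dim H^0(X,\calE^\dual)^F$.

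The hard part will be Frobenius-semisimplicity on the generalized $1$-eigenspace of $H^0(X,\calE^\dual)$, without which one only gets an \emph{a priori} inequality between the pole order and $\dim H^0(X,\calE^\dual)^F$. This is exactly where Corollary~\ref{C:geometrically semisimple} does the work: the isotypical decomposition exhibits $H^0(X,\calE^\dual)$ as a direct sum of Frobenius-eigenlines indexed by the constant constituents of $\calE$, so generalized and ordinary $1$-eigenspaces automatically coincide.
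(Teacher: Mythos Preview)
Your overall strategy matches the paper's: both isolate the contribution of a single cohomology group (your $H^{2d}_c(X,\calE)$, the paper's $H^0(X,\calE^\dual)$ in \eqref{eq:lefschetz}) by using weight bounds to kill all other factors. The paper is terse: it invokes Lemma~\ref{L:Weil II} for $i>0$ and simply asserts that ``the factor $i=0$ contributes the predicted value,'' without addressing the passage from the algebraic multiplicity of the eigenvalue $1$ to $\dim H^0(X,\calE^\dual)^F$.

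You go further and try to justify that passage via Corollary~\ref{C:geometrically semisimple}, but here there is a genuine gap: an \emph{isotypical decomposition} is strictly weaker than semisimplicity. The corollary yields $\calE \cong \bigoplus_j \calE_{(j)}$ where each $\calE_{(j)}$ has all Jordan--H\"older constituents isomorphic to a single irreducible $\calE_j$; it does \emph{not} give $\calE_{(j)} \cong \calE_j^{n_j}$. Concretely, take $X = \Spec(k)$ (so $d=0$) and let $\calE$ be the two-dimensional Weil representation on which Frobenius acts by a unipotent Jordan block. This $\calE$ is $\iota$-pure of weight $0$ and already isotypical, yet $L(\calE,T) = (1-T)^{-2}$ has pole order $2$ at $T = q^{-d} = 1$ while $\dim H^0(X,\calE^\dual)^F = 1$. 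So your claim that the isotypical decomposition exhibits $H^0(X,\calE^\dual)$ as a sum of Frobenius-eigenlines fails.

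The subtlety you flagged is therefore real, and neither your argument nor the paper's closes it under the stated hypotheses. In the paper the lemma is only applied where the issue does not arise (to rank-$1$ objects in Theorem~\ref{T:weight filtration}, to tensor products of semisimple objects in Theorem~\ref{T:chebotarev}); if you want a self-contained statement, add the hypothesis that $\calE$ is semisimple, after which your Schur-type analysis goes through as written.
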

\begin{proof}
In the Lefschetz trace formula \eqref{eq:lefschetz}, the factor $i=0$ contributes the predicted value to the pole order. Meanwhile, by Lemma~\ref{L:Weil II}, the eigenvalues of $F$ on $H^i(X, \calE^\dual)$ for $i>0$ all have absolute value at least $q^{1/2}$,
so the corresponding factor of \eqref{eq:lefschetz} only has zeroes or poles in the region
$|T| \geq q^{-d+1/2}$. This proves the claim.
\end{proof}

By probing $X$ with curves, we gain some initial data about weights.

\begin{lemma} \label{L:pointwise weights}
Suppose that $X$ is irreducible.
\begin{enumerate}
\item[(a)]
The multiset of $\iota$-weights of $\calE$ at $x \in X^\circ$ is independent of $x$.
We may thus refer to it as the \emph{multiset of $\iota$-weights of $\calE$}.
\item[(b)]
There exist a constant twist $\calE'$ of $\calE$ and a positive integer $N$ such that 
the multiset of $\iota$-weights of $\calE'$ has least element $0$
and for each $x \in X^\circ$, the product of the roots of $P(\calE'_x, T)$ of $\iota$-weight $0$
equals a root of unity of order dividing $N$.
\end{enumerate}
\end{lemma}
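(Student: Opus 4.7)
\emph{Plan for (a).} Given two closed points $x_1, x_2 \in X^\circ$, I would pass to $X_n$ for some $n$ and choose a curve $C \subset X_n$ passing through points lying above $x_1$ and $x_2$. Restricting $\calE$ to $C$ and semisimplifying gives $\bigoplus_j \calF_j$ with each $\calF_j$ irreducible on $C$. Lemma~\ref{L:decompose to finite order} supplies decompositions $\calF_j = \calG_j \otimes \calL_j$ with $\det \calG_j$ of finite order and $\calL_j$ constant of rank $1$, while Theorem~\ref{T:one-dim case}(i) shows each $\calG_j$ is $\iota$-pure of weight $0$; hence each $\calF_j$ is $\iota$-pure of the single weight $w(\calL_j)$. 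The multiset of $\iota$-weights of $\calE|_C$ at any $x \in C^\circ$ then equals $\bigsqcup_j \{w(\calL_j) \text{ with multiplicity } \rank \calF_j\}$, which is independent of the point. Specializing at $x_1$ and $x_2$ gives (a).

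\emph{Plan for (b).} Let $w_1$ be the common least weight from (a) and let $s$ be its multiplicity. The first step is to pass to $\wedge^s \calE$: applying (a) to this object shows that its weight multiset is constant in $x$ with least element $sw_1$ of multiplicity exactly $1$. For any curve $C$, $\wedge^s \calE|_C$ is $\iota$-mixed by (a), so the Deligne--Abe weight filtration produces a canonical rank-$1$ minimum-weight subobject $\calM_C \subset \wedge^s \calE|_C$, pure of weight $sw_1$. The key step is to assemble these into a single rank-$1$ subobject $\calM \subset \wedge^s \calE$ on all of $X$; granting such a $\calM$, Lemma~\ref{L:decompose to finite order} provides a factorization $\calM = \calN \otimes \calC$ with $\calN$ of finite order $N$ and $\calC$ constant of rank $1$ and weight $sw_1$, so with Frobenius eigenvalue $\gamma_\calC$ satisfying $|\iota(\gamma_\calC)| = q^{sw_1/2}$. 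I then choose a constant rank-$1$ object $\calL$ with Frobenius eigenvalue $\alpha_\calL$ satisfying $\alpha_\calL^s = \gamma_\calC^{-1}$ and $|\iota(\alpha_\calL)| = q^{-w_1/2}$ (such a root exists in the algebraically closed full coefficient field). Set $\calE' := \calE \otimes \calL$: all weights of $\calE'$ are shifted by $-w_1$, so the least weight becomes $0$, and the rank-$1$ minimum-weight subobject of $\wedge^s \calE'$ is $\calM \otimes \calL^{\otimes s} = \calN$, since the constant part $\calC \otimes \calL^{\otimes s}$ has Frobenius eigenvalue $\gamma_\calC \alpha_\calL^s = 1$ and is therefore trivial. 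Hence for each $x \in X^\circ$, the product $\mu_{\calE'}(x)$ of the weight-$0$ roots of $P(\calE'_x, T)$ is the unique root of $P(\calN_x, T)$, and this is a root of unity of order dividing $N$.

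\emph{Main obstacle.} The delicate step is the globalization $\calM_C \leadsto \calM$. The natural approach is to identify $\calM$ through its geometric fibers: at any geometric point $\overline{x}$ above $x \in X^\circ$, the putative fiber of $\calM$ is the (one-dimensional, by (a) applied to $\wedge^s \calE$) minimum-weight Frobenius eigenspace of the action of $F_x$ on the fiber of $\wedge^s \calE$ at $\overline{x}$. This line is visibly Frobenius-stable, and for each curve $C$ through $x$ it is stable under $\overline{G}(\wedge^s \calE|_C)$ because it coincides with the fiber of $\calM_C$. What must be shown is that this single line is stable under the full geometric monodromy $\overline{G}(\wedge^s \calE)$; in the \'etale case this follows from the density of the curve-level fundamental groups $\pi_1^{\mathrm{et}}(C_{\overline{k}}, \overline{x})$ inside $\pi_1^{\mathrm{et}}(X_{\overline{k}}, \overline{x})$ as $C$ ranges over curves through $x$, and in the crystalline case the analogous density statement in the Tannakian framework of \S\ref{sec:mono} provides the same conclusion. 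Once $\overline{G}(\wedge^s \calE)$-stability is established, the combined stability under geometric monodromy and Frobenius identifies the line as the fiber of a rank-$1$ subobject $\calM \subset \wedge^s \calE$, completing the argument.
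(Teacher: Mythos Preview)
Your argument for (a) is correct and essentially the same as the paper's.

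For (b) there is a genuine gap at the globalization step. You claim the minimum-weight line at $\overline{x}$ is $\overline{G}(\wedge^s \calE)$-stable because it is $\overline{G}(\wedge^s \calE|_C)$-stable for every curve $C$ through $x$, appealing to a ``density'' of curve-level monodromy inside the full monodromy. But no such density statement is available at this point: in the \'etale case this is essentially a Lefschetz-type assertion about $\pi_1$, and the proofs of such statements in the paper (Lemma~\ref{L:slicing}, Theorem~\ref{T:strong slicing}) pass through Theorem~\ref{T:weight filtration}, which in turn relies on the very lemma you are proving---so the argument would be circular. In the crystalline case there is no representation-theoretic description of $\overline{G}(\calE)$ to fall back on, and the Tannakian ``density'' you invoke is not established anywhere in \S\ref{sec:mono}. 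Without the global $\calM$, your construction of $\calE'$ does not get off the ground.

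The paper's proof of (b) is much more direct and sidesteps globalization entirely. One first twists so that the product of weight-$0$ roots equals $1$ at a single chosen point $x$. For any other $y$, take a curve $C$ through $x$ and $y$; on $C$ the weight-$0$ eigenvalues come from certain irreducible constituents of $\calE|_C$, and Lemma~\ref{L:decompose to finite order} writes the determinant of each such constituent as (finite order)\,$\otimes$\,(constant). The product of weight-$0$ roots at any point $z\in C$ is then a root of unity times $A^{[\kappa(z):k]}$ for a single scalar $A$ independent of $z$; since this product is $1$ at $x$, the scalar $A$ is itself a root of unity, hence so is the product at $y$. The uniform bound $N$ follows because all Frobenius eigenvalues lie in an extension of the base coefficient field of bounded degree, which contains only finitely many roots of unity.
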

\begin{proof}
To check (a), it suffices to compare two points $x,y \in X^\circ$; this may be done by restricting to a curve in $X$ through 
$x,y$. That is, we may assume that $X$ is a curve; in this case, we may further assume that $\calE$ is irreducible and
(by making a constant twist dictated by 
Lemma~\ref{L:decompose to finite order}) its determinant is of finite order. We may then apply Theorem~\ref{T:one-dim case}
to deduce that $\calE$ is $\iota$-pure of weight $0$.

To check (b), we may first twist to ensure that for some $x \in X^\circ$, the product in question equals $1$.
Given any other point $y \in X^\circ$, choose a curve $C$ in $X$ containing $x$ and $y$; by Theorem~\ref{T:one-dim case} again, 
the eigenvalues in question arise from some irreducible constituents of $\calE|_C$. By Lemma~\ref{L:decompose to finite order},
we see that the product of the eigenvalues at $y$ is a root of unity; since these eigenvalues belong to an extension of $\QQ_\ell$ or $\QQ_p$ of degree bounded independently of $x$, the order of the root of unity is also bounded.
\end{proof}

\begin{cor} \label{C:pure after restriction}
Suppose that for some $w \in \RR$,
there exists an open dense subset $U$ of $X$ such that $\calE|_U$ is $\iota$-pure of weight $w$.
Then $\calE$ is also $\iota$-pure of weight $w$.
\end{cor}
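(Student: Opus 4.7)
The plan is to deduce this almost directly from Lemma~\ref{L:pointwise weights}(a), after reducing to the connected case. The key observation is that the definition of the multiset of $\iota$-weights of $\calE$ at a closed point $x$ depends only on $\calE_x$; in particular, for $x \in U^\circ \subset X^\circ$, this multiset is the same whether computed using $\calE$ or using $\calE|_U$.

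First, I would reduce to the case where $X$ is connected. Since $X$ is smooth over $k$, it is in particular normal, so its connected components are irreducible and disjoint. The hypothesis and conclusion are both checked component by component, and since $U$ is dense in $X$, its intersection with each connected component is open and dense. So assume $X$ is connected (hence irreducible). Since $U$ is a nonempty open subscheme of a scheme of finite type over $k$, it contains a closed point $y$ of $U$, which is then also a closed point of $X$ (as $\kappa(y)$ is finite over $k$).

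Now I would apply Lemma~\ref{L:pointwise weights}(a) to $\calE$ on the irreducible scheme $X$: the multiset of $\iota$-weights of $\calE$ at $x \in X^\circ$ is independent of $x$. Computing this multiset at the chosen point $y \in U^\circ$, it coincides with the multiset of $\iota$-weights of $\calE|_U$ at $y$, which by hypothesis consists of $\rank(\calE)$ copies of $w$. Hence the multiset of $\iota$-weights of $\calE$ at every $x \in X^\circ$ equals $\{w,\ldots,w\}$, i.e., $\calE$ is $\iota$-pure of weight $w$.

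There is no real obstacle here: the heavy lifting is already done in Lemma~\ref{L:pointwise weights}(a), whose proof invokes Theorem~\ref{T:one-dim case} through a curve passing through two given closed points. The only minor point to be careful about is ensuring that closed points of $U$ remain closed in $X$, which is automatic in finite-type situations over a field.
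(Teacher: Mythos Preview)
Your proof is correct and follows exactly the approach in the paper: the paper's proof is the single sentence ``This is immediate from Lemma~\ref{L:pointwise weights}(a),'' and you have simply unpacked that immediacy, including the (necessary) reduction to the irreducible case so that the lemma applies.
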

\begin{proof}
This is immediate from Lemma~\ref{L:pointwise weights}(a).
\end{proof}

We now introduce the geometric setup for Lefschetz slicing.
We recall here a commonly used construction found in \cite[Proposition~3.3]{artin-sga4}.
(One can make a similar construction without enlarging $k$ using Poonen's Bertini theorem; see \S\ref{sec:abe-esnault}.)
\begin{lemma} \label{L:elementary fibration}
Pick $x \in X(k)$ and let
$\tilde{X}$ be the blowup of $X$ at $($the reduced closed subscheme supported at$)$ $x$. 
Then for some finite extension $k'$ of $k$,
there exists an open neighborhood $U$ of $x$ in $X_{k'}$ whose inverse image $\tilde{U}$ in $\tilde{X}_{k'}$ can be expressed
as a fibration $f: \tilde{U} \to S$ in curves over some smooth proper $k'$-scheme $S$.
\end{lemma}
\begin{proof}
Since we are free to replace $X$ with an open subscheme containing $x$, we may ensure that $X$ is affine of dimension $n$.
Let $\overline{X}$ be the Zariski closure of $X$ in some projective embedding $X \hookrightarrow \PP^N_k$.
For some choice of $k'$, we can choose sections $s_0,\dots,s_{n-1}$ of $\calO(1)$ on $\PP^N_{k'}$ whose zero loci $H_0,\dots,H_{n-1}$ on $\PP^N_{k'}$ have the following properties
(because each property holds generically).
\begin{enumerate}[label=$\bullet$]
\item
The intersection $L = H_0 \cap \cdots \cap H_{n-1}$ has dimension $N-n$ and contains $x$.
\item
The intersection $L \cap X_{k'}$ is transversal at $x$.
\item
The intersection $L \cap (\overline{X}_{k'} \setminus X_{k'})$ is empty.
\end{enumerate}
Put $Z := (L \cap X_{k'}) \setminus \{x\}$;
then the sections $s_0,\dots,s_{n-1}$ define a map
$f: \tilde{X}_{k'} \setminus Z \to S := \PP^{n-1}_{k'}$.
The exceptional divisor of $\tilde{X}_{k'} \to X_{k'}$ 
is the image of a section $g: S \to \tilde{X}_{k'}$ of $f$.
Each fiber of $f$ is isomorphic to an intersection of $X_{k'} \setminus Z$ with $n-1$ hyperplanes which is transversal at $x$; consequently, there is a unique irreducible component of this fiber containing $x$, and this component is smooth of dimension 1 at $x$.
We now take $\tilde{U}$ to be the set obtained from $\tilde{X}_{k'} \setminus Z$ by removing:
\begin{enumerate}[label=$\bullet$]
\item
the union of all irreducible components of fibers of $f$ which do not meet $g(S)$; and
\item
the set of all singular points of irreducible components of fibers of $f$ which do meet $g(S)$.
\end{enumerate}
This yields an open subscheme $\tilde{U}$ of $\tilde{X}_{k'} \setminus Z$ not meeting $g(S)$;
we take $U$ to be the image of $\tilde{U}$ in $X_{k'}$.
\end{proof}

\begin{remark} \label{R:elementary fibration}
We iterate a key point from the proof of Lemma~\ref{L:elementary fibration}: the exceptional divisor of the blowup $\tilde{U} \to U$ is the image of a section $g: S \to \tilde{U}$ of $f$
on which the pullback of $\calE$ is constant.
We will combine this geometric construction with the following observation: for any coefficient object $\calF$ in the same category as $\calE$, there exists an open dense subscheme $V$ of $S$ (depending on $\calF$)
such that $f_* \calF$ exists as a coefficient object on $V$ and its formation commutes with arbitrary base change on $V$.
(See \cite[Theorem~7.3.3]{kedlaya-finiteness} for the crystalline case.)
\end{remark}

We finally obtain the main theorem on weights.
\begin{theorem} \label{T:weight filtration}
The following statements hold.
\begin{enumerate}
\item[(a)]
If $\calE$ is irreducible, then it is $\iota$-pure of some weight.
\item[(b)]
There exists a unique filtration
\[
0 = \calE_0 \subset \cdots \subset \calE_l = \calE
\]
for which there exists an increasing sequence $w_1 < \cdots < w_l$ of real numbers
for which $\calE_i/\calE_{i-1}$ is $\iota$-pure of weight $w_i$. We call this filtration the \emph{weight filtration} of $\calE$.
\end{enumerate}
\end{theorem}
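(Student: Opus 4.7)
I will first prove (a), from which (b) follows formally. For (a), I reduce to the case where $\det(\calE)$ has finite order by twisting with a suitable constant rank-one object via Lemma~\ref{L:decompose to finite order}; since a constant twist shifts every $\iota$-weight by the same real number, $\iota$-purity is preserved under this reduction, and a finite-order determinant forces the putative common weight to be $0$. By Lemma~\ref{L:pointwise weights}(a), the multiset $W$ of $\iota$-weights of $\calE$ is independent of the chosen point of $X^\circ$, and its entries must sum to $0$; the task reduces to showing that $W$ consists of a single element repeated $\rank(\calE)$ times.

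The plan for this step is to combine curve restrictions with induction on $\dim(X)$. For any $x \in X^\circ$, choose a curve $C$ in some $X_n$ through a point above $x$; by Theorem~\ref{T:one-dim case} (together with Corollary~\ref{C:uniformly algebraic} to reduce to the finite-order determinant case on $C$), every irreducible constituent of $\calE|_C$ is $\iota$-pure of some weight, and the disjoint union of these weight multisets (with multiplicities) equals $W$. If $\dim(X) = 1$, then $\calE$ itself is the object to which Theorem~\ref{T:one-dim case} applies directly, yielding $\iota$-purity. For $\dim(X) > 1$, I invoke the geometric setup of Remark~\ref{R:elementary fibration}: after replacing $X$ with a dense open containing a chosen point, realize it as a fibration $f: \tilde X \to S$ in curves with a section on which $\calE$ restricts to a constant object, and compare the fiberwise weight decomposition on generic fibers against the global irreducibility of $\calE$. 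The main obstacle here is to argue that $|W| > 1$ leads to a contradiction: I would attempt this by picking out the maximal-weight sub-multiset on each fiber and gluing these into a proper subobject of $\calE$, using the inductive hypothesis applied on $S$ (which has strictly smaller dimension) together with Lemma~\ref{L:Weil II} applied to the Leray-type analysis of $f_*(\calE \otimes \calE^\vee)$, whose formation commutes with base change after a further shrinking of $S$ per Remark~\ref{R:elementary fibration}. Executing this gluing cleanly in the crystalline setting is the delicate point.

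For (b), granted (a), the filtration is constructed as follows. Every Jordan--H\"older constituent of $\calE$ is $\iota$-pure by (a); let $w_1 < \cdots < w_l$ enumerate the distinct weights that actually appear. Define $\calE_i$ to be the sum of all subobjects of $\calE$ whose Jordan--H\"older constituents have $\iota$-weight at most $w_i$; this exists as a subobject by Noetherianity of the category of coefficient objects (cf.\ Lemma~\ref{L:restriction functor}). By construction $\calE_i / \calE_{i-1}$ has no constituent of weight strictly less than $w_i$ (such a constituent would already sit inside $\calE_{i-1}$) and none of weight strictly greater (by definition of $\calE_i$), so it is $\iota$-pure of weight $w_i$. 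Uniqueness is immediate: in any candidate filtration with pure successive quotients of strictly increasing weights, the $i$-th step must contain every subobject whose constituents have weight at most $w_i$, hence agrees with the one constructed.
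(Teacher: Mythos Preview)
Your overall framework for (a) matches the paper's: reduce via the elementary fibration of Remark~\ref{R:elementary fibration} and exploit the one-dimensional case. But you are missing the key device that makes the ``gluing'' work, and you flag this yourself as the delicate point. The paper does not try to glue the extremal-weight pieces of $\calE$ directly across fibers (which, as you sense, has no obvious mechanism in the crystalline setting). Instead it uses Lemma~\ref{L:pointwise weights}(b) --- which you did not invoke --- to arrange that the least $\iota$-weight is $0$ with some multiplicity $m$, and that the product of the corresponding Frobenius reciprocal roots is an $N$-th root of unity for a fixed $N$. Then it applies the fibration not to $\calE$ but to $\calF := \wedge^{mN}(\calE^{\oplus N})$: on each fiber, the first step of the weight filtration of $\calF$ is now a \emph{rank-one} object with all Frobenius eigenvalues equal to $1$, hence trivial by Lemma~\ref{L:pole order}. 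This makes $f_*\calF$ a trivial rank-one object on the base, and pulling back produces a trivial rank-one subobject of $\calF$ on $X$; multilinear algebra then extracts a rank-$m$ subobject of $\calE$, contradicting irreducibility unless $m = \rank(\calE)$. The exterior-power trick is precisely what replaces your unspecified gluing step.

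Two smaller points. First, in the weight filtration the lowest-weight pieces are the subobjects (by the direction of the inequality in Lemma~\ref{L:Weil II}), so your ``maximal-weight sub-multiset'' should read ``minimal-weight''. Second, your argument for (b) implicitly needs Lemma~\ref{L:Weil II}: the claim that a constituent of $\calE_i/\calE_{i-1}$ of weight strictly less than $w_i$ ``would already sit inside $\calE_{i-1}$'' is not automatic --- a Jordan--H\"older factor of a quotient need not lift to a subobject --- and requires the vanishing of the relevant $\Ext$ group in the wrong weight direction. The paper simply records (b) as a formal consequence of (a) plus Lemma~\ref{L:Weil II}, citing \cite[Corollary~10.5]{kedlaya-isocrystals} and \cite[Corollary~3.5.2]{daddezio}.
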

\begin{proof}
For any given $X$, part (a) of Theorem~\ref{T:weight filtration} implies part (b) using Lemma~\ref{L:Weil II};
see \cite[Corollary~10.5]{kedlaya-isocrystals} or  \cite[Corollary~3.5.2]{daddezio}.
For $\dim(X) = 1$, we may apply Theorem~\ref{T:one-dim case} to obtain (a). 

It thus remains to prove (a) for general $X$
using the truth of (b) for curves. For this, we may assume that $X$ is irreducible; moreover,
by Corollary~\ref{C:pure after restriction}, we may check the claim after replacing $X$ with an open dense subscheme.
We may also make a constant twist on $\calE$; hence by Lemma~\ref{L:pointwise weights},
we may assume that the multiset of $\iota$-weights of $\calE$ has least element $0$ with some multiplicity $m$, and that for each $x\in X^\circ$ the product of
the corresponding roots of $P(\calE_x,T)$ is a root of unity of order dividing some fixed positive integer $N$.

After shrinking $X$ and enlarging $k$, we may apply Lemma~\ref{L:elementary fibration} and Remark~\ref{R:elementary fibration} to $\calF := \wedge^{mN} (\calE^{\oplus N})$. 
On each fiber of $f$, we may apply the induction hypothesis to produce a weight filtration of $\calF$; the first step
of this filtration is an object of rank $1$ with all Frobenius eigenvalues equal to 1, so by Lemma~\ref{L:pole order} 
it is trivial. 
On some open subscheme $V$ of $S$, we may form the pushforward $f_* \calF$ in a manner compatible with arbitrary base change (Remark~\ref{R:elementary fibration});
this pushforward must therefore be trivial of rank 1, and we may pull back to obtain
a trivial rank-$1$ subobject of $\calF$ (initially on $f^{-1}(V)$, but by
Lemma~\ref{L:restriction functor} this subobject extends over $X$). 
By multilinear algebra, we recover a subobject of $\calE$ of rank $m$;
this is a contradiction unless $m = \rank(\calE)$, meaning that $\calE$ is $\iota$-pure of weight $0$.
\end{proof}

\begin{remark}
An alternate approach to proving Theorem~\ref{T:weight filtration} would be to use the fact that
part (a) can be checked after an alteration; in light of Proposition~\ref{P:alter to tame}, this means that it suffices to check (a) under the additional assumption that $\calE$ is tame. In this context, one can obtain very strong versions of Lefschetz
slicing; see \S\ref{sec:abe-esnault}.
\end{remark}

\begin{remark}\label{R:algebraic embedding}
We again remind the reader of \cite[Remarque~1.2.11]{deligne-weil2}:
the use of an algebraic embedding $\iota$ in Theorem~\ref{T:weight filtration} is merely for expository convenience.
\emph{A posteriori}, we will only need to embed various number fields into $\CC$.

On a similar note, the weight of an irreducible coefficient object may depend on the choice of $\iota$,
but in light of Lemma~\ref{L:decompose to finite order} this reduces to a rank-1 issue.
In any case, note that changing $\iota$ may change the order of the successive quotients in the weight filtration,
in which case some splitting must occur.
\end{remark}

\subsection{Lefschetz slicing and valuations of eigenvalues}
\label{sec:slicing}

As discussed in the introduction, a key tool in reducing the study of general coefficient objects to the case of curves is the preservation of irreducibility of a coefficient object under restriction to a suitable curve,
but the usual proof of this in the \'etale case does not apply in the crystalline case.
We give a replacement argument here using weights; we make no effort to strengthen the final result because this can easily be done
\emph{a posteriori} (see \S\ref{sec:abe-esnault}).
We then deduce parts (iii) and (iv) of Conjecture~\ref{conj:deligne}, as well as a weak version of (ii).

\begin{lemma} \label{L:slicing}
Let $\calE$ be a geometrically irreducible coefficient object on $X$
and choose $x \in X^\circ$.
Then for some positive integer $n$, there exists a curve $C$ in $X_n$ containing $x$ such that $\calE|_C$ is irreducible.
\end{lemma}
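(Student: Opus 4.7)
The plan is to reformulate irreducibility of $\calE|_C$ cohomologically and then locate a suitable curve via the elementary fibration of Remark~\ref{R:elementary fibration}. Since $\calE$ is geometrically irreducible, Remark~\ref{R:absolutely irreducible} gives that $\calE$ is absolutely irreducible, so by Schur's lemma $\mathrm{End}(\calE) = \overline{\QQ}_*$, where $\overline{\QQ}_*$ denotes the full coefficient field. Set $\calF := \calE^\dual \otimes \calE$; by Theorem~\ref{T:weight filtration}(a), $\calE$ is $\iota$-pure for some algebraic embedding $\iota$, so $\calF$ is $\iota$-pure of weight $0$. It will suffice to produce a curve $C$ through (a point above) $x$, in some $X_n$, with $\dim_{\overline{\QQ}_*} H^0(C, \calF|_C) = 1$, since this forces $\mathrm{End}(\calE|_C) = \overline{\QQ}_*$ and hence irreducibility of $\calE|_C$.

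Apply Remark~\ref{R:elementary fibration} to produce, after replacing $X$ by a neighborhood of $x$, a fibration $f\colon \tilde X \to S$ in curves equipped with a section $g\colon S \to \tilde X$ meeting the exceptional fiber over $x$ and along which $\calE$ pulls back to a constant object; each fiber $C_s = f^{-1}(s)$ then contains $g(s)$, which lies above $x$. Shrink $S$ so that $f_*\calF$ exists as a coefficient object on $S$ with formation commuting with arbitrary base change; then $(f_*\calF)_s = H^0(C_s, \calF|_{C_s})$ for every closed point $s \in S$. The identity endomorphism of $\calE$ yields a canonical embedding $\calO_S \hookrightarrow f_*\calF$. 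Combining Lemma~\ref{L:Weil II} to control the weights contributed by $R^1 f_*\calF$ with Corollary~\ref{C:geometrically semisimple} applied to the weight-$0$ part, I would split off this trivial summand on a dense open $U \subset S$, writing $f_*\calF|_U = \calO_U \oplus \calG$ with $\calG$ containing no trivial subobject.

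The conclusion is reduced to showing $\calG = 0$ on a dense open subset of $U$: any closed $s$ in that open then provides a curve $C_s$ of the required form. The key input is Lemma~\ref{L:pole order} applied to $\calF$ on $\tilde X$: the pole of $L(\calF, T)$ at $T = q^{-\dim \tilde X}$ has order equal to $\dim H^0(\tilde X, \calF)^F = \dim \mathrm{End}(\calE) = 1$. Combined with the Leray spectral sequence $H^p(S, R^q f_*\calF) \Rightarrow H^{p+q}(\tilde X, \calF)$, this forces $\dim H^0(U, \calG)^F = 0$. The principal obstacle is to upgrade this global invariant vanishing to the pointwise statement $\calG_s = 0$ for $s$ in a dense open of $U^\circ$: one applies Lemma~\ref{L:pole order} to $\calG$ on $U$ itself (using that $\calG$ inherits $\iota$-purity of weight $0$ as a summand of the pure object $f_*\calF|_U$) to rule out constant constituents, and then invokes upper semicontinuity via the base-change compatibility of $f_*$ to conclude. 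Once such an $s$ is found, setting $m = [\kappa(s)\colon\! k]$ produces a curve $C = C_s$ in $X_m$ containing a point above $x$ on which $\calE$ restricts to an irreducible coefficient object, as required.
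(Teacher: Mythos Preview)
Your reduction to showing $\dim H^0(C_s, \calF|_{C_s}) = 1$, equivalently $\calG = 0$, is correct, and the elementary fibration setup matches the paper. The gap is in your final paragraph: you have only established $H^0(U,\calG)^F = 0$ (indeed this is already built into your splitting $f_*\calF|_U = \calO_U \oplus \calG$ with $\calG$ having no trivial subobject, so the detour through Lemma~\ref{L:pole order} on $\tilde X$ adds nothing). This does \emph{not} force $\calG=0$. A nontrivial constant rank-$1$ object (Frobenius eigenvalue $\alpha\neq 1$ with $|\iota(\alpha)|=1$) is pure of weight $0$, has $H^0(U,\cdot)^F=0$, and contributes no pole to $L(\cdot,T)$ at $T=q^{-\dim U}$; so Lemma~\ref{L:pole order} cannot ``rule out constant constituents''. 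Nor is there any upper semicontinuity to invoke: $\calG$ is a coefficient object on $U$, hence of locally constant rank, and $\calG_s=0$ on a dense open is equivalent to $\calG=0$ everywhere.

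What you are missing is the one input you mention but never use: the section $g$. Evaluation at $g(s)$ gives an injection $f_*\calF \hookrightarrow g^*\calF$ of coefficient objects on $U$; since $g^*\calE$ is constant (it is the fiber $\calE_x$), so is $g^*\calF$, and hence $f_*\calF$ is constant by Remark~\ref{R:constant subobject}. Now use the \emph{geometric} invariants rather than the $F$-invariants: geometric irreducibility of $\calE$ gives $\dim H^0(X,\calF)=1$ (equivalently, $H^0$ of the trace-zero part vanishes), and by full faithfulness of restriction this equals $H^0(U,f_*\calF)$. For a constant object the geometric $H^0$ computes the full fiber, so $\mathrm{rank}(f_*\calF)=1$ and $\calG=0$. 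This is exactly the mechanism in the paper's proof; once you insert it, your argument and the paper's coincide (modulo the cosmetic choice of working with $\calE^\dual\otimes\calE$ versus its trace-zero summand).
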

\begin{proof}
Instead of keeping track of the extension degree $n$, we adopt the convention that we may enlarge $k$ freely during the argument, replacing $x$ with some closed point lying over it.
To begin with, we may assume that $x \in X(k)$.

Apply Lemma~\ref{L:elementary fibration} and Remark~\ref{R:elementary fibration} with $\calF$ equal to the trace-zero component of $\calE^\dual \otimes \calE$. 
By Remark~\ref{R:absolutely irreducible}, $\overline{G}(\calE)$ must act irreducibly on its standard representation, so 
$H^0(X, \calE^\dual \otimes \calE)$ is the span of the identity map and $H^0(X, \calF) = 0$.

Since $g^* \calE$ is a trivial coefficient object, so is $g^* \calF$. 
By adjunction, $f^* f_* \calF$ corresponds to a subobject of $\calF$, so $g^* f^* f_* \calF$ corresponds to a nonzero subobject of $g^* \calF$;
hence $g^* f^* f_* \calF$ is trivial.
Since $g$ is a section of $f$, there is a natural identification $g^* f^* f_* \calF \cong f_* \calF$; hence $f_* \calF$ is trivial, as then is $f^* f_* \calF$.
In particular, the dimension of $H^0(X, f^* f_* \calF)$ equals the rank of $f_* \calF$; since $H^0(X, f^* f_* \calF)$ is a subspace of $H^0(X, \calF) = 0$, we conclude that $f_* \calF = 0$.

Choose a point $y \in S(k)$ and take $C := f^{-1}(y)$. The vanishing of $f_* \calF$ implies that $H^0(C, \calF|_C) = 0$.
Pick an embedding $\iota$ of the coefficient field of $\calE$ into $\CC$; by Theorem~\ref{T:weight filtration}, $\calE$ is 
$\iota$-pure of some weight, and likewise after restricting to $C$. By Corollary~\ref{C:geometrically semisimple}, $\calE|_C$
admits an isotypical decomposition; combined with the vanishing of $H^0(C, \calF|_C)$, this implies first that $\calE|_C$ is isotypical, and second that $\calE|_C$ is irreducible.
\end{proof}

\begin{remark}
Another way to establish Lemma~\ref{L:slicing}, which does not require a prior study of weights,
is to 
show that  the inclusion $\overline{G}(\calE|_C) \to \overline{G}(\calE)$ is an isomorphism
by applying Lemma~\ref{L:elementary fibration} and Remark~\ref{R:elementary fibration} with $\calF$ being 
a sufficiently large object in the Tannakian category generated by $\calE$.
This is in the spirit of Corollary~\ref{C:same monodromy} below, but requires slightly more care because the open set $U$
varies with $\calF$; one must thus make some argument to give an \emph{a priori} choice of $\calF$ that suffices to rule out all unwanted possibilities for $\overline{G}(\calE|_C)$.
(As in Lemma~\ref{L:pointwise weights}, the key point is that an extension of $\QQ_\ell$ or $\QQ_p$ of bounded degree contains a bounded number of roots of unity.)
\end{remark}

\begin{lemma} \label{L:valuations}
Let $\calE$ be a coefficient object on $X$ which is irreducible with determinant of finite order.
Let $\lambda$ be a reciprocal root of $P(\calE_x, T)$ for some $x \in X^\circ$.
\begin{enumerate}
\item[(a)]
$\lambda$ is algebraic over $\QQ$.
\item[(b)]
At any $($finite or infinite$)$ place of $\QQ(\lambda)$ not lying above $p$, $\lambda$ has absolute value $1$.
$($For infinite places, this is included in Theorem~\ref{T:weight filtration}.$)$
\item[(c)]
At any place of $\QQ(\lambda)$ lying above $p$, $\lambda$ has absolute value at least that of $\kappa(x)$ to the power $\frac{1}{2} \rank(\calE)$.
\end{enumerate}
In particular, parts (i), (iii), and (iv) of Conjecture~\ref{conj:deligne} hold.
\end{lemma}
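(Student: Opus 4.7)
The strategy is to slice to a curve via Lefschetz slicing and then invoke Theorem~\ref{T:one-dim case}. The infinite-place part of (b) is already immediate from Theorem~\ref{T:weight filtration}: because $\det(\calE)$ has finite order, $\calE$ is $\iota$-pure of weight $0$ for every embedding $\iota$, so each reciprocal root of $P(\calE_x, T)$ has complex absolute value $1$. I therefore focus on (a), the finite-place part of (b), and (c).

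First I reduce to the geometrically irreducible case. By Remark~\ref{R:absolutely irreducible}, after pulling back along $X_n \to X$ for some $n$, $\calE$ splits as a direct sum $\calE_1 \oplus \cdots \oplus \calE_m$ of geometrically irreducible objects cyclically permuted by Frobenius; by Lemma~\ref{L:decompose to finite order2}, each $\det(\calE_i)$ remains of finite order. Pick $x' \in X_n^\circ$ lying over $x$ and set $d' := [\kappa(x')\colon\!\kappa(x)]$. The fiber at $x'$ of the pullback is obtained from $\calE_x$ by scalar extension while replacing $q(x)$-Frobenius by its $d'$-th power, so $\lambda^{d'}$ is a reciprocal root of $P((\calE_i)_{x'}, T)$ for some index $i$.

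Next, I apply Lemma~\ref{L:slicing} to the geometrically irreducible $\calE_i$ at the point $x'$: after a further base extension $k_n \to k_{nn'}$, one obtains a curve $C$ in $X_{nn'}$ passing through a point $x''$ above $x'$ for which $\calE_i|_C$ is irreducible. The determinant of $\calE_i|_C$ is a restriction of $\det(\calE_i)$ and so remains of finite order. Setting $d'' := [\kappa(x'')\colon\!\kappa(x')]$, the scalar $\lambda^{d'd''}$ is a reciprocal root of $P((\calE_i|_C)_{x''}, T)$.

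Now $\calE_i|_C$ is an irreducible coefficient object on a curve with determinant of finite order, so Theorem~\ref{T:one-dim case} delivers parts (ii)--(iv) of Conjecture~\ref{conj:deligne} for it, giving the algebraicity, triviality of $\mu$-adic valuation for $\mu \nmid p$, and the bound on $\nu$-adic valuation for $\nu \mid p$ of the scalar $\lambda^{d'd''}$. Applying the same reasoning to $\calE^\dual$ (equivalently, to the reciprocal root $\lambda^{-1}$) yields the companion bound needed for (c). Using $q(x'') = q(x)^{d'd''}$ and $\rank(\calE_i|_C) \leq \rank(\calE)$, all three of these assertions descend to (a), the finite-place part of (b), and (c) for $\lambda$ by dividing the relevant valuations by $d'd''$. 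The only real subtlety is a bookkeeping one: after the two reductions one is studying $\lambda^{d'd''}$ at $x''$ rather than $\lambda$ at $x$, but the inequalities scale homogeneously under this process, so the bounds transfer intact.
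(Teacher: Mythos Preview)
Your proof is correct and follows essentially the same approach as the paper: reduce to the geometrically irreducible case via Remark~\ref{R:absolutely irreducible} and Lemma~\ref{L:decompose to finite order2}, apply Lefschetz slicing (Lemma~\ref{L:slicing}) to find a curve through the given point on which the object stays irreducible, and then invoke Theorem~\ref{T:one-dim case}. You are in fact more explicit than the paper about the bookkeeping---tracking the exponent $d'd''$ under successive base extensions and noting that (c) requires applying the curve bound to $\calE^\dual$ as well---whereas the paper simply observes that checking the claims at $x$ is equivalent to checking them at a point lying over $x$ after base extension, and leaves the rest implicit.
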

\begin{proof}
We start with some initial reductions.
For $n$ a positive integer, let $f_n: X_n \to X$ be the canonical morphism. For $x_n \in X_n^\circ$ lying over $x \in X^\circ$,
the roots of $P((f_n^* \calE)_{x_n}, T)$ are the $[\kappa(x_n):\kappa(x)]$-th powers of the roots of $P(\calE_x, T)$;
consequently, for any particular $x$, checking the claims at $x$ is equivalent to checking them at $x_n$.

This means that by Remark~\ref{R:absolutely irreducible} and Lemma~\ref{L:decompose to finite order2}, we may reduce to the case where $\calE$ is geometrically irreducible. Then for each $x \in X^\circ$, we may apply Lemma~\ref{L:slicing} to find a curve $C$ in $X_n$ through $x$, for some $n$ \emph{depending on $x$}, for which $\calE|_C$ is again irreducible.
We may then invoke Theorem~\ref{T:one-dim case} to deduce that the desired assertions hold at $x$.
\end{proof}

\subsection{Uniqueness of companions via weights}
\label{subsec:Chebotarev density}

For \'etale coefficient objects, the uniqueness of a companion in a given category is an easy consequence of the Chebotar\"ev density theorem applied to residual representations. In order to obtain similar assertions that also cover crystalline coefficient objects, one must replace the use of Chebotar\"ev density with a weight-theoretic argument. The argument we have in mind is due to Tsuzuki (as presented in \cite[Proposition~A.4.1]{abe-companion}, \cite[Theorem~10.8]{kedlaya-isocrystals}), but it must be noted that similar arguments occur in \'etale cohomology in the context of independence-of-$\ell$ statements, as in Larsen--Pink \cite[Proposition~2.1]{larsen-pink} (see also \cite[Proposition~8.20]{pal}).

\begin{theorem}[Tsuzuki] \label{T:chebotarev}
Let $\calE_1, \calE_2$ be two algebraic coefficient objects on $X$ which are companions.
\begin{enumerate}
\item[(a)]
If $\calE_1$ is irreducible, then so is $\calE_2$.
\item[(b)]
If $\calE_1, \calE_2$ are in the same category, then $\calE_1$ and $\calE_2$ have the same semisimplification.
\end{enumerate}
\end{theorem}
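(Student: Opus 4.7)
The plan is to follow Tsuzuki's strategy (cf.~\cite[Theorem~10.8]{kedlaya-isocrystals}, \cite[Proposition~A.4.1]{abe-companion}): express the relevant Hom-dimensions as pole orders of $L$-functions of tensor constructions via Lemma~\ref{L:pole order}, use the companion hypothesis to equate those $L$-functions, and conclude by a counting identity on multiplicities of Jordan--H\"older constituents.

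Reduce to $X$ connected of pure dimension $d$, and fix an algebraic embedding $\iota$ of the full coefficient field into $\CC$. Companions share pointwise Frobenius characteristic polynomials, hence pointwise $\iota$-weight multisets. Using Theorem~\ref{T:weight filtration}, reduce both parts to the case where $\calE_1$ and $\calE_2$ are $\iota$-pure of a common weight $w$: in part~(a), $\calE_1$ is irreducible hence $\iota$-pure by Theorem~\ref{T:weight filtration}(a), and $\calE_2$ inherits the same weight through the companion property; in part~(b), first replace each $\calE_i$ by its semisimplification (the hypothesis and conclusion are preserved) and then work weight-by-weight using Theorem~\ref{T:weight filtration}(b). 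In either case each tensor $\calE_i^\dual \otimes \calE_j$ (formed in the appropriate category) is $\iota$-pure of weight $0$.

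For the central step, observe that the companion hypothesis forces $L(\calE_1^\dual \otimes \calE_1, T) = L(\calE_2^\dual \otimes \calE_2, T)$ (and also $= L(\calE_1^\dual \otimes \calE_2, T)$ in part~(b)), since each local factor is built from the common multiset of Frobenius eigenvalues of $\calE_{1,x}$ and $\calE_{2,x}$. By Lemma~\ref{L:pole order}, the common pole order at $T = q^{-d}$ equals $\dim \Hom(\calE_j, \calE_i)$ (in the arithmetic category). In part~(b), writing both semisimplifications over a common set of isomorphism classes of irreducibles as $\calE_1^{\mathrm{ss}} = \bigoplus_i m_i \calF_i$ and $\calE_2^{\mathrm{ss}} = \bigoplus_i n_i \calF_i$, Schur's lemma over the algebraically closed full coefficient field yields $\dim \Hom(\calE_1^{\mathrm{ss}}, \calE_1^{\mathrm{ss}}) = \sum m_i^2$, $\dim \Hom(\calE_2^{\mathrm{ss}}, \calE_2^{\mathrm{ss}}) = \sum n_i^2$, and $\dim \Hom(\calE_1^{\mathrm{ss}}, \calE_2^{\mathrm{ss}}) = \sum m_i n_i$. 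The identity $\sum m_i^2 = \sum m_i n_i = \sum n_i^2$ gives $\sum (m_i - n_i)^2 = 0$, whence $m_i = n_i$ for all $i$ and $\calE_1^{\mathrm{ss}} \cong \calE_2^{\mathrm{ss}}$, proving (b). For (a), Schur applied to irreducible $\calE_1$ gives $\dim \Hom(\calE_1, \calE_1) = 1$; writing $\calE_2^{\mathrm{ss}} = \bigoplus_i n_i \calG_i$ in its own category, the equality $\sum n_i^2 = 1$ forces a single $n_i = 1$ with all others zero, so $\calE_2^{\mathrm{ss}}$ is irreducible, and $\calE_2$ itself is irreducible since it has a single Jordan--H\"older constituent of full rank.

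The main obstacle is ensuring Lemma~\ref{L:pole order} gives the pole order as the \emph{full} arithmetic Hom-dimension rather than some smaller generalized-eigenspace contribution --- this requires either semisimplicity of Frobenius on $H^0$ in the weight-$0$ pure case, or a careful interpretation of the $F$-invariant notation as the generalized $1$-eigenspace. A secondary subtlety is that in part~(a), $\calE_1$ and $\calE_2$ live in possibly different categories, so one never forms a cross-category tensor product; the argument compares only the same-category tensor squares $\calE_i^\dual \otimes \calE_i$, each within its own category, whose $L$-functions coincide since their local factors depend only on the common multiset of Frobenius eigenvalues at each closed point.
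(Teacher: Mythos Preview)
Your proof is correct and follows essentially the same strategy as the paper: reduce via Theorem~\ref{T:weight filtration} to the pure semisimple case and read off Hom dimensions as pole orders of $L$-functions via Lemma~\ref{L:pole order}. The one minor variation is in part~(b), where the paper tests against each irreducible constituent $\calF$ individually (equating the pole orders of $L(\calE_1^\dual \otimes \calF, T)$ and $L(\calE_2^\dual \otimes \calF, T)$ to match multiplicities directly) rather than invoking your Cauchy--Schwarz identity $\sum m_i^2 = \sum m_i n_i = \sum n_i^2$; your worry about strict versus generalized $F$-eigenspaces on $H^0$ is dispelled by the semisimplicity reduction, since the maximal constant subobject of a semisimple coefficient object is itself semisimple and hence $F$ acts diagonalizably on it.
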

\begin{proof}
In both cases, we may assume that $X$ is irreducible of pure dimension $d$, and
(by Theorem~\ref{T:weight filtration}) that $\calE_1, \calE_2$ are both pure of weight 0 and semisimple.

In case (a), note that by Schur's lemma, $\calE_i$ is irreducible if and only if $H^0(X, \calE_i^\dual \otimes \calE_i)^\varphi$ is one-dimensional. We may thus apply Lemma~\ref{L:pole order} to $\calE_1^\dual \otimes \calE_1, \calE_2^\dual \otimes \calE_2$ to conclude.

In case (b), it suffices to check that any irreducible subobject $\calF$ of $\calE_1$ (which must also be pure of weight 0) also occurs as a summand of $\calE_2$.
To this end, note that $\calF$ occurs as a summand of $\calE_i$ if and only if $H^0(X, \calF^\dual \otimes \calE_i)^{\varphi} \neq 0$;
we may thus apply Lemma~\ref{L:pole order} to $\calE_1^\dual \otimes \calF, \calE_2^\dual \otimes \calF$ to conclude.
\end{proof}

\begin{cor} \label{C:extend companions}
Let $\calE$ and $\calF$ be algebraic coefficient objects on $X$ (not necessarily in the same category).
If there exists an open dense subscheme $U$ of $X$ such that $\calE|_U$ and $\calF|_U$ are companions,
then $\calE$ and $\calF$ are companions.
\end{cor}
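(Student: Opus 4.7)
The target is to verify $P(\calE_x, T) = P(\calF_x, T)$ for every $x \in X^\circ$; the hypothesis handles $x \in U^\circ$, so the issue is $x \in (X \setminus U)^\circ$. My plan proceeds in two stages: reduce to the one-dimensional case by slicing, then handle a curve via the uniqueness of companions.

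Stage one (reduction to curves). Given $x \in (X \setminus U)^\circ$, I would choose a positive integer $n$ and a curve $C$ in $X_n$ containing a closed point $x_n$ above $x$ with $\kappa(x_n) = \kappa(x)$, arranged so that $C \cap U_n$ is open dense in $C$. Such a choice exists by an elementary Bertini-type construction, cutting a quasi-projective neighborhood of $x$ by hyperplanes through $x$ and passing to an irreducible component (with a base extension of $k$ if geometric irreducibility demands it). The restrictions $(\calE|_C)|_{C \cap U_n}$ and $(\calF|_C)|_{C \cap U_n}$ remain companions, so the curve case applied to $C$ yields $P((\calE|_C)_{x_n}, T) = P((\calF|_C)_{x_n}, T)$; since $\kappa(x_n) = \kappa(x)$ this is the desired equality at $x$.

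Stage two (curve case). Assume $X$ is a curve. If $\calE$ and $\calF$ lie in different categories, I would apply Corollary \ref{C:algebraic companion} (effective in dimension one) to produce a coefficient object $\calG$ in the category of $\calF$ which is a companion of $\calE$; otherwise set $\calG = \calE$. Then $\calG|_U$ and $\calF|_U$ are companions on $U$ within a common category, both algebraic. By Theorem \ref{T:chebotarev}(b) we obtain $(\calG|_U)^{\mathrm{ss}} \cong (\calF|_U)^{\mathrm{ss}}$. Because restriction from $X$ to $U$ is fully faithful and preserves irreducibility (Lemma \ref{L:restriction functor}), any Jordan--H\"older filtration of $\calG$ (resp.\ $\calF$) restricts to one of $\calG|_U$ (resp.\ $\calF|_U$), so $(\calG^{\mathrm{ss}})|_U \cong (\calG|_U)^{\mathrm{ss}}$ and similarly for $\calF$. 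Full faithfulness then turns the resulting isomorphism on $U$ into a morphism $\calG^{\mathrm{ss}} \to \calF^{\mathrm{ss}}$ on $X$ whose kernel and cokernel both vanish on $U$; since restriction is faithful, these vanish on $X$, so the morphism is an isomorphism. Hence $\calG$ and $\calF$ share Frobenius characteristic polynomials at every closed point of $X$, and transitivity with the pair $(\calE, \calG)$ closes the argument.

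The main obstacle I anticipate lies in stage one: arranging a curve through $x$ whose chosen preimage has the same residue field as $x$, so that matching at $x_n$ really gives matching at $x$. This is unsubtle in most cases but requires some bookkeeping when $\kappa(x)$ is large and one must pass to $X_n$. The stage-two assertion that semisimplification commutes with restriction to a dense open is the technical step warranting explicit appeal to Lemma \ref{L:restriction functor}, especially in the crystalline setting where restriction from $X$ to $U$ is not essentially surjective in codimension one; fortunately only full faithfulness and preservation of irreducibility are used.
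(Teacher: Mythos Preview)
Your proof is correct and follows the same route as the paper's: reduce to a curve through $x$, invoke Corollary~\ref{C:algebraic companion} to move both objects into a single category, apply Theorem~\ref{T:chebotarev}(b) on $U$, and use full faithfulness of restriction (Lemma~\ref{L:restriction functor}) to extend the resulting isomorphism to $X$. The paper streamlines your stage two by replacing $\calE$ and $\calF$ with their semisimplifications at the outset (harmless for Frobenius polynomials), and sidesteps your stage-one worry by restricting to a curve in $X$ itself rather than in $X_n$, so no residue-field bookkeeping is needed.
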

\begin{proof}
It suffices to check the equality $P(\calE_x, T) = P(\calF_x, T)$ at an arbitrary $x \in X^\circ$.
By restricting to a suitable curve in $X$ containing $x$, we may further assume that $\dim(X) = 1$.
At this point, we may assume that $\calE$ and $\calF$ are semisimple.
By Corollary~\ref{C:algebraic companion}, $\calE$ admits a semisimple companion $\calF'$ in the same category as $\calF$. On $U$, the restrictions $\calF|_U$ and $\calF|'_U$ are semisimple (by Lemma~\ref{L:restriction functor})
companions of each other; by Theorem~\ref{T:chebotarev}(b), there exists an isomorphism $\calF|_U \cong \calF'|_U$.
By Lemma~\ref{L:restriction functor} again, this isomorphism extends to $X$; this yields the desired result.
\end{proof}

\begin{cor} \label{C:open immersion companion descent}
Let $U$ be an open dense subscheme of $X$. Let $\calE$ and $\calF$ be algebraic coefficient objects on $X$ and $U$, respectively $($not necessarily in the same category$)$, with $\calF$ semisimple. If $\calE|_U$ and $\calF$ are companions, then $\calF$ extends to a coefficient object on $X$ which is a companion of $\calE$.
\end{cor}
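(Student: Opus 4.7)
The plan is to reduce the problem to a purely local extension question at codimension-one points of $X \setminus U$, and to handle those using the one-dimensional companion theory already at our disposal (Corollary~\ref{C:algebraic companion}) together with the uniqueness of semisimple companions (Theorem~\ref{T:chebotarev}(b)). I would first assume $X$ irreducible and observe that once a semisimple extension $\tilde{\calF}$ of $\calF$ to $X$ exists, it is automatically a companion of $\calE$ by Corollary~\ref{C:extend companions}, and it is unique as a semisimple object by combining Theorem~\ref{T:chebotarev}(b) with the full-faithfulness aspect of Lemma~\ref{L:restriction functor}. By this uniqueness, extensions produced locally on $X$ glue, so existence is a local question.

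Next, by Lemma~\ref{L:restriction functor}, the restriction is an equivalence when $X \setminus U$ has codimension at least $2$; enlarging $U$ accordingly, we may assume $X \setminus U$ is pure of codimension $1$. Further shrinking $X$, it suffices to extend $\calF$ across a single smooth irreducible divisor $D \subset X$ with generic point $\xi$.

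The strategy is then to argue that $\calF$ is ``unramified at $\xi$'' by testing against transverse curves. Namely, for any smooth point $P \in D$ and any smooth curve $C \subset X$ passing through $P$ transversally to $D$ with $C \setminus \{P\} \subset U$, the objects $\calE|_C$ and $\calF|_{C \setminus \{P\}}$ are companions on $C \setminus \{P\}$. Applying Corollary~\ref{C:algebraic companion} to $\calE|_C$ yields a semisimple companion $\calG$ of $\calE|_C$ on all of $C$ in the same category as $\calF$; Theorem~\ref{T:chebotarev}(b) then gives $\calG|_{C\setminus\{P\}} \cong \calF|_{C \setminus \{P\}}$, showing that $\calF|_{C \setminus \{P\}}$ extends across $P$ to $\calG$.

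The hard part will be to upgrade this family of extensions along transverse curves to an extension across $\xi$ in $X$. This is a codimension-one purity assertion, and I would handle the \'etale and crystalline cases separately: in the \'etale case, after using Proposition~\ref{P:alter to tame} (or Remark~\ref{R:tamely}) on a finite \'etale cover to trivialize wild inertia along $D$, the transverse curve $C$ captures the full tame inertia $I_\xi^{t}$ through the canonical isomorphism $I_P^{t} \cong I_\xi^{t}$, so triviality on $C$ forces triviality at $\xi$; in the crystalline case, the semistable reduction theorem reduces to the docile setting, after which extension across $\xi$ is again detected along transverse curves by the curve-wise criterion of Lemma~\ref{L:extend tame} for docility combined with the fact that docile monodromy at $\xi$ that is trivial on one transverse curve must be trivial. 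Once extension across $\xi$ is established, gluing over all components of $X \setminus U$ (using the previously noted uniqueness) produces the desired global extension.
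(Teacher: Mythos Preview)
Your overall strategy---reduce to showing that $\calF$ extends, then verify extension by probing with curves---is exactly the paper's strategy, but the paper executes it more directly by invoking a ready-made ``cut-by-curves'' extension criterion (\cite[Theorem~5.16]{kedlaya-isocrystals} in the crystalline case, and the obvious statement via $\pi_1$ in the \'etale case): a coefficient object on $U$ extends to $X$ if and only if for every morphism $f\colon C \to X$ from a smooth curve, the pullback to $C \times_X U$ extends to $C$. With that in hand, the paper simply applies Corollary~\ref{C:algebraic companion} to $f^*\calE$ to produce a semisimple companion $\calF'$ on $C$, observes that $\calF'|_{C\times_X U}$ and the pullback of $\calF$ are semisimple companions in the same category, and concludes by Theorem~\ref{T:chebotarev}(b). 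No reduction to codimension $1$, no transversality, no case split is needed.

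What you call the ``hard part'' is precisely the content of this criterion, and your sketch of it has a real gap in the crystalline case. You propose to invoke the semistable reduction theorem to ``reduce to the docile setting,'' but semistable reduction applies to an object defined on (an alteration of) $X$, whereas $\calF$ lives only on $U$; you have no \emph{a priori} control on its singularities along $D$, so there is no way to assert that $\calF$ (or a pullback of it) is docile at $\xi$ before you have already extended it. Likewise, speaking of ``docile monodromy at $\xi$ that is trivial on one transverse curve'' presupposes a monodromy operator at $\xi$, which is only defined once you know $\calF$ has at worst log-poles there. In the \'etale case your sketch is closer to correct, but the identification $I_P^t \cong I_\xi^t$ still needs care. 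The cleanest fix is to replace the entire ``hard part'' with a direct citation of the cut-by-curves criterion, after which your curve-level argument (which is identical to the paper's) finishes the proof.
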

\begin{proof}
By Corollary~\ref{C:extend companions}, we only need to check that $\calF$ extends to $X$.
Using the ``cut-by-curves'' criterion (see \cite[Theorem~5.16]{kedlaya-isocrystals} in the crystalline case),
it suffices to check that for any morphism $f: C \to X$ over $k$ where $C$  is a curve over $k$, the pullback $f^* \calF$ extends
from $C \times_X U$ to $C$.
To this end, apply Corollary~\ref{C:algebraic companion} to construct a semisimple crystalline companion $\calF'$ of $f^* \calE$ on $C$. By Lemma~\ref{L:restriction functor}, the restriction of $\calF'$ to $C \times_X U$ is again semisimple.
Now $f^* \calF$ and $\calF'|_{C \times_X U}$ are semisimple companions of each other in the same category,
so by Theorem~\ref{T:chebotarev}(b) they are isomorphic; this proves the claim.
\end{proof}

\begin{cor} \label{C:companion det}
Let $\calE$ and $\calF$ be algebraic coefficient objects on $X$ $($not necessarily in the same category$)$. 
If $\det(\calE)$ is of finite order, then so is $\det(\calF)$.
\end{cor}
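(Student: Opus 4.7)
The plan is to reduce the problem to a statement about rank-one coefficient objects and then invoke the uniqueness of companions (Theorem~\ref{T:chebotarev}(b)) to conclude. Throughout, I read the statement as implicitly assuming that $\calE$ and $\calF$ are companions, since otherwise the claim is obviously false (take $\calF$ to be any constant rank-$1$ coefficient object of infinite order).

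First I would take determinants. Since $\calE$ and $\calF$ are companions, for each $x \in X^\circ$ we have $P(\calE_x, T) = P(\calF_x, T)$; the Frobenius eigenvalue on $\det(\calE_x)$ is the product of the reciprocal roots of $P(\calE_x, T)$, which equals the corresponding product for $\calF_x$. Thus $\det(\calE)$ and $\det(\calF)$ are companions of each other as rank-$1$ coefficient objects. Hence it suffices to treat the rank-$1$ case: if $\calL, \calL'$ are companion rank-$1$ algebraic coefficient objects and $\calL$ has finite order, then so does $\calL'$.

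Next, let $N$ be the order of $\calL$. Then $\calL^{\otimes N}$ is trivial, so the Frobenius eigenvalue on $\calL_x$ is an $N$-th root of unity for every $x \in X^\circ$; by the companion relation, the same is true for $\calL'$. Consequently, $(\calL')^{\otimes N}$ is a rank-$1$ coefficient object whose Frobenius eigenvalue at every closed point is $1$. It therefore has the same characteristic polynomials at all closed points as the trivial coefficient object $\mathcal{O}_X$ in the same category, so $(\calL')^{\otimes N}$ and $\mathcal{O}_X$ are companions within one and the same category.

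Finally, both $(\calL')^{\otimes N}$ and $\mathcal{O}_X$ are of rank $1$, hence already semisimple, so Theorem~\ref{T:chebotarev}(b) yields an isomorphism $(\calL')^{\otimes N} \cong \mathcal{O}_X$. This means $\calL' = \det(\calF)$ has order dividing $N$, completing the proof. There is no real obstacle here; the only non-formal input is Theorem~\ref{T:chebotarev}(b), which is precisely the Tsuzuki-style replacement for the classical Chebotarev argument that makes the statement go through uniformly in the \'etale and crystalline cases.
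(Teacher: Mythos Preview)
Your proof is correct and follows essentially the same approach as the paper's: reduce to rank $1$ by taking determinants (noting that the determinants are again companions), take the $N$-th tensor power where $N$ is the order of $\det(\calE)$, and apply Theorem~\ref{T:chebotarev}(b) to conclude that $\det(\calF)^{\otimes N}$ is trivial. You are also right that the statement implicitly assumes $\calE$ and $\calF$ are companions.
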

\begin{proof}
Since $\det(\calE)$ and $\det(\calF)$ are again companions, we may assume at once that $\calE$ and $\calF$ are of rank 1. Choose a positive integer $n$ for which $\calE^{\otimes n}$ is trivial; then $\calF^{\otimes n}$ is a companion of the trivial object, and so by Theorem~\ref{T:chebotarev}(b)
is itself trivial.
\end{proof}

\begin{cor} \label{C:bound coefficient extension2}
The conclusion of Corollary~\ref{C:bound coefficient extension}, which was originally stated for $\dim(X) = 1$,
holds for general $X$.
\end{cor}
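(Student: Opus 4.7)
The plan is to transcribe the proof of Corollary~\ref{C:bound coefficient extension} essentially verbatim, substituting Theorem~\ref{T:chebotarev} for the uniqueness assertion in Theorem~\ref{T:companion dimension 1}. Since the proof of Corollary~\ref{C:bound coefficient extension} only used Theorem~\ref{T:companion dimension 1} to choose an isomorphism $\iota_\tau: \tau^* \calE \cong \calE$ for each $\tau \in \Gal(L/L_0)$, the generalization amounts to checking that Theorem~\ref{T:chebotarev} supplies the same input for $X$ of arbitrary dimension.

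First I would reduce to the case where $\calE$ is irreducible (hence absolutely irreducible after a further bounded enlargement of $L_0$ using Remark~\ref{R:absolutely irreducible}) with $\det(\calE)$ of finite order, by decomposing into irreducibles and applying Lemma~\ref{L:decompose to finite order} as in the original proof; we may also enlarge $L_0$ to contain $\mu_r$, and we bound $n := |\det(\calE)|$ by the number of roots of unity in $L_0$. Next I would realize $\calE$ in $\calC_L$ for some finite Galois extension $L/L_0$. Absolute irreducibility implies irreducibility in $\calC_L$, so by Schur's lemma the endomorphism algebra of $\calE$ in $\calC_L$ is $L$.

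The main step is to produce, for each $\tau \in G := \Gal(L/L_0)$, an isomorphism $\iota_\tau: \tau^* \calE \cong \calE$ in $\calC_L$. Here is where the higher-dimensional argument diverges from the curve case: since $\tau$ fixes $L_0 \supseteq E$, the polynomials $P((\tau^*\calE)_x, T) = \tau(P(\calE_x, T)) = P(\calE_x, T)$ coincide for every $x \in X^\circ$, so $\tau^* \calE$ is an $E$-algebraic companion of $\calE$ in the same category $\calC_L$. By Theorem~\ref{T:chebotarev}(a), $\tau^*\calE$ is irreducible, and by Theorem~\ref{T:chebotarev}(b) its semisimplification agrees with that of $\calE$; thus $\tau^*\calE \cong \calE$, and we pick such an isomorphism $\iota_\tau$.

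Once the $\iota_\tau$ are in hand, the rest of the argument is identical to that of Corollary~\ref{C:bound coefficient extension}: the discrepancy $c(\tau_1,\tau_2) \in L^\times$ between $\iota_{\tau_1 \tau_2}$ and $\iota_{\tau_1} \circ \tau_1^*(\iota_{\tau_2})$ defines a $2$-cocycle whose class lies in $H^2(G, \mu_{rn})$ (because $\det(\calE)$ has order $n$ and we adjoined $\mu_r$), and local class field theory trivializes the corresponding $rn$-torsion Brauer class over any finite extension $L_1/L_0$ of degree $rn$, e.g.\ the unramified one. This $L_1$ depends only on $L_0$ and $r$, yielding the claim. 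There is no serious obstacle here; the only non-formal input beyond the curve case is the uniqueness statement Theorem~\ref{T:chebotarev}(b), which has already been proved over general smooth $X$ using Tsuzuki's weight-theoretic argument.
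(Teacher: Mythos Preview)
Your proposal is correct and takes essentially the same approach as the paper: the paper's proof simply says to rerun the argument of Corollary~\ref{C:bound coefficient extension} with the uniqueness input from Theorem~\ref{T:companion dimension 1} replaced by Theorem~\ref{T:chebotarev}, and you have spelled this out in detail. The one point you might make more explicit is that the isomorphism $\tau^*\calE \cong \calE$ furnished by Theorem~\ref{T:chebotarev} is \emph{a priori} only in the category with full coefficients $\overline{L}$, and one needs the one-dimensionality of $\Hom$ (from Schur) together with compatibility of $\Hom$ with base change to descend it to $\calC_L$; but this is exactly the content of the remark following Theorem~\ref{T:chebotarev} in the paper, and your invocation of Schur's lemma already sets this up.
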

\begin{proof}
We may replace the application of Theorem~\ref{T:companion dimension 1} in the proof of Corollary~\ref{C:bound coefficient extension2} with Theorem~\ref{T:chebotarev}, and then the rest of the argument remains unchanged.
\end{proof}

\begin{remark}
A related assertion to Theorem~\ref{T:chebotarev}(b)
is that $\calE_1$ and $\calE_2$ are forced to have the same semisimplification 
as soon as the equality $P(\calE_{1,x}, T) = P(\calE_{2,x}, T)$ holds for $x$ running over a certain finite subset of $X^\circ$ determined by $X$, the common rank of $\calE_1$ and $\calE_2$, 
and the wild monodromy of $\calE_1$ and $\calE_2$. For the case of curves, this is made explicit by Deligne in \cite[Proposition~2.5]{deligne-finite}; this plays a crucial role in the proof of Theorem~\ref{T:Deligne finite}.
\end{remark}

\begin{remark}
While Theorem~\ref{T:chebotarev} is formulated in terms of an algebraically closed full coefficient field,
it also implies a corresponding assertion for a restricted coefficient field. Namely, suppose that $\calE, \calF$ are two objects in $\Weil(X) \otimes L$ or $\FIsoc^\dagger(X) \otimes L$, where $L$ is a finite extension
of the base coefficient field, which are companions of each other. Suppose further that $\calE$ is irreducible
as a coefficient object (\textit{i.e.}, it remains irreducible after replacing $L$ with any finite extension). 
Then Theorem~\ref{T:chebotarev} implies that $\calE$ and $\calF$ are isomorphic as objects
of $\Weil(X) \otimes \overline{L}$ or $\FIsoc^\dagger(X) \otimes \overline{L}$, but the hypothesis on $\calE$
ensures that the space of homomorphisms from $\calE$ to $\calF$ as coefficient objects is one-dimensional over $\overline{L}$. Moreover, this space is the base extension from $L$ to $\overline{L}$ of the space of homomorphisms 
from $\calE$ to $\calF$ in the original category; consequently, there must exist a nonzero homomorphism from $\calE$ to $\calF$ in the original category, which must then be an isomorphism.
\end{remark}

\subsection{Uniform algebraicity}
\label{subsec:uniform algebraicity}

We next apply the method of Deligne \cite{deligne-finite} 
to address part (ii) of Conjecture~\ref{conj:deligne}.
This discussion constitutes our first use of \emph{uniformity} across curves within a fixed variety, as a technique for studying coefficient objects on higher-dimensional varieties; this technique will prove to be extremely fruitful as we continue.

\begin{lemma} \label{L:extend algebraic}
Let $U$ be an open dense subscheme of $X$, let $E$ be a number field, and let $\calE$ be a coefficient object on $X$.
If $\calE|_U$ is $E$-algebraic, then so is $\calE$.
\end{lemma}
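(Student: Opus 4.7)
The plan is to reduce to the case where $X$ is a curve via restriction, then combine Theorem~\ref{T:companion dimension 1} with a Galois conjugation argument to promote $E$-algebraicity from $V := C \cap U$ to all of $C$. For the reduction, given any $x \in X^\circ$, I choose a curve $C \subset X$ containing $x$ and meeting $U$; the restriction $\calE|_C$ is a coefficient object on $C$ whose restriction to $V$ remains $E$-algebraic, and if the lemma is known on $C$, then $P(\calE_x, T) \in E[T]$. Since $x$ was arbitrary, $\calE$ is $E$-algebraic on $X$.

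Now assume $X = C$ is a curve. I may assume $\calE$ is semisimple (by Lemma~\ref{L:restriction functor}, this does not affect the hypothesis on $V$). The intermediate step is to show $\calE$ is uniformly algebraic over some number field $F$, possibly larger than $E$. Decompose $\calE = \bigoplus_i \calF_i$ into irreducibles over the full coefficient field. By Lemma~\ref{L:restriction functor}, each $\calF_i|_V$ is irreducible, and the product of the $P((\calF_i|_V)_x, T)$ equals $P((\calE|_V)_x, T) \in E[T]$; hence each $P((\calF_i|_V)_x, T)$ lies in $\overline{\QQ}[T]$. Using Lemma~\ref{L:decompose to finite order}, write $\calF_i = \calG_i \otimes \calL_i$ with $\det(\calG_i)$ of finite order; Theorem~\ref{T:companion dimension 1} then ensures that each $\calG_i$ is uniformly algebraic. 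The algebraicity of $P((\calF_i|_V)_x, T)$ at a single point of $V^\circ$ now forces the Frobenius scalar of $\calL_i$ to be algebraic, whence each $\calF_i$, and therefore $\calE$, is $F$-algebraic for a single number field $F$.

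To upgrade from $F$ to $E$, enlarge $F$ so that $F/\QQ$ is Galois and contains $E$. For each $\tau \in \Gal(F/E)$, Corollary~\ref{C:Galois conjugates} produces a coefficient object $\calE_\tau$ on $C$ with $P((\calE_\tau)_x, T) = \tau(P(\calE_x, T))$ for every $x \in C^\circ$. Because $\calE|_V$ is $E$-algebraic and $\tau$ fixes $E$, the restrictions $\calE_\tau|_V$ and $\calE|_V$ are companions; both remain semisimple (restriction preserves semisimplicity by Lemma~\ref{L:restriction functor}, and so does Galois conjugation, which merely permutes the isomorphism classes of irreducible constituents), so Theorem~\ref{T:chebotarev}(b) yields an isomorphism $\calE_\tau|_V \cong \calE|_V$. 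By full faithfulness in Lemma~\ref{L:restriction functor}, this isomorphism extends uniquely to $\calE_\tau \cong \calE$ on all of $C$, so $\tau$ fixes each $P(\calE_x, T)$. Varying $\tau$ over $\Gal(F/E)$ gives $P(\calE_x, T) \in E[T]$ everywhere. There is no single conceptually deep step: the principal challenge is the bookkeeping for the several field extensions and confirming that Galois conjugation, restriction, and the identification of $\calE_\tau|_V$ with $\calE|_V$ all occur within a single category so that Theorem~\ref{T:chebotarev}(b) genuinely applies.
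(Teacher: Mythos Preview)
Your proof is correct and follows essentially the same route as the paper's: reduce to curves, establish uniform algebraicity via Lemma~\ref{L:decompose to finite order} and Theorem~\ref{T:companion dimension 1}, then use Galois conjugates (Corollary~\ref{C:Galois conjugates}) together with the Chebotarev-type uniqueness to descend from the larger number field to $E$. The only cosmetic difference is that the paper reduces to an irreducible $\calE$ and invokes Corollary~\ref{C:extend companions} directly, whereas you work with the full semisimple object and unfold that corollary's proof inline via Theorem~\ref{T:chebotarev}(b) plus full faithfulness.
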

\begin{proof}
Since $E$-algebraicity is a pointwise condition for any given $E$, to check it at some $x \in X^\circ$ it suffices to do so after restriction to some curve in $X$ containing $x$. We may thus assume hereafter that $X$ is a curve over $k$.
We may further assume that $\calE$ is irreducible. 

We first check that $\calE$ is uniformly algebraic (but not necessarily $E$-algebraic). By Lemma~\ref{L:decompose to finite order}, there exists a
constant coefficient object $\calF$ of rank 1 such that $\det(\calE \otimes \calF)$ has finite order.
By restricting to $U$, we see that $\calF^{\otimes \rank(\calE)}$ is (uniformly) algebraic, as then is $\calF$.
Consequently, to check that $\calE$ is uniformly algebraic, we may reduce to the case where $\calE$ is irreducible with finite determinant, in which case Theorem~\ref{T:companion dimension 1} applies.

By the previous paragraph, we can find a Galois number field $E'$ containing $E$ such that $\calE$ is $E'$-algebraic.
By Corollary~\ref{C:Galois conjugates}, for each automorphism $\tau \in \Gal(E'/E)$, we can find another coefficient object $\calE_\tau$ in the same category as $\calE$ whose Frobenius characteristic polynomials are related to those of $\calE$ by application of $\tau$. By hypothesis, $\calE|_U$ and $\calE_{\tau}|_U$ are companions, as then are $\calE$ and $\calE_\tau$ by Corollary~\ref{C:extend companions}. This proves the claim.
\end{proof}

\begin{theorem}\label{T:Deligne finite}
Any algebraic coefficient object on $X$ is uniformly algebraic.
\end{theorem}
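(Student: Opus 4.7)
The plan is to reduce to curves, where uniform algebraicity is already established (Corollary~\ref{C:uniformly algebraic}, via Theorem~\ref{T:companion dimension 1}), and then to promote uniformity to $X$ using a Galois-orbit counting argument, adapting Deligne's approach in \cite{deligne-finite}.

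First, I would reduce to $\calE$ irreducible with determinant of finite order. Decomposing $\calE$ into irreducible constituents (on each of which algebraicity is preserved, since $P(\calE_x,T)$ factors compatibly), it suffices to treat an irreducible $\calE$. Next, Lemma~\ref{L:decompose to finite order} produces a constant rank-one object $\calL$ such that $\det(\calE \otimes \calL)$ is of finite order. Since $\det(\calE)$ is algebraic, the product $\calL^{\otimes r} = \det(\calE)^{-1} \otimes \det(\calE \otimes \calL)$ and hence $\calL$ itself is algebraic; as a rank-one constant object $\calL$ is tautologically uniformly algebraic, so I may further assume $\calE$ is irreducible with $\det(\calE)$ of finite order.

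Next, I would work one curve at a time. Given $x \in X^\circ$, Lemma~\ref{L:slicing} supplies (perhaps after a base extension to $X_n$) a curve $C$ through a preimage of $x$ such that $\calE|_C$ is irreducible, and Theorem~\ref{T:companion dimension 1} then gives that $\calE|_C$ is uniformly algebraic. Corollary~\ref{C:Galois conjugates} produces for each $\tau \in \Gal(\overline{\QQ}/\QQ)$ a Galois-conjugate coefficient object $(\calE|_C)_\tau$ on $C$, and Corollary~\ref{C:finiteness curve}, refined by Lemma~\ref{L:decompose to finite order2} (which bounds the constant twisting between finite-determinant classes to torsion of uniformly bounded order), ensures that the isomorphism classes of irreducible rank-$r$ finite-determinant coefficient objects on $C$ form a finite set. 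Hence the Galois orbit of $[\calE|_C]$ is finite, its stabilizer $H_C \subset \Gal(\overline{\QQ}/\QQ)$ has finite index, and by Theorem~\ref{T:chebotarev} every $\tau \in H_C$ fixes $P(\calE_y, T)$ for all $y \in C^\circ$.

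The hard part will be to make the finite-index stabilizer uniform as $C$ varies, so that a single number field works for all of $X^\circ$. Here I plan to apply Proposition~\ref{P:alter to tame}, passing to an alteration $f \colon X' \to X$ with $X'$ admitting a good compactification $(\overline{X'}, Z')$ and $f^*\calE$ docile. In this tame setting the Swan conductor is uniformly zero, so Proposition~\ref{P:GOS formula} bounds the conductor of $(f^*\calE)|_{C'}$ in terms of $r$ and the intersection $C' \cdot Z'$; this in turn renders the finite set appearing in Corollary~\ref{C:finiteness curve} uniformly bounded across curves $C' \subset X'$ in a covering family. Combining with the finite-test-set principle recalled in the remark following Theorem~\ref{T:chebotarev} (namely \cite[Proposition~2.5]{deligne-finite}), I would extract a single finite-index subgroup $H \subset \Gal(\overline{\QQ}/\QQ)$ fixing $P((f^*\calE)_x, T)$ for all $x \in (X')^\circ$. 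Finally, I would descend from $X'$ to $X$ on the finite \'etale locus of $f$ (where residue-field extensions have degree bounded by $\deg f$, so the coefficient field grows by at most a controlled algebraic extension) and apply Lemma~\ref{L:extend algebraic} to extend from the resulting dense open to all of $X$. The fixed field of the (possibly further shrunk) finite-index subgroup then gives the desired number field $E$. The principal technical obstacle is the uniform extraction of $H$ in the tame setting: on an individual curve the Galois conjugates are directly available from Corollary~\ref{C:Galois conjugates}, but combining them into a coherent constraint on $(X')^\circ$ requires the companion-uniqueness of Theorem~\ref{T:chebotarev} applied across a sufficiently rich family of curves transverse to $Z'$.
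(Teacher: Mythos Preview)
Your proposal is correct and follows essentially the same route as the paper's proof. The paper is simply terser: rather than unpacking the Galois-orbit finiteness argument, it verifies the two hypotheses of Deligne's axiomatized result \cite[Remarque~3.10]{deligne-finite} --- namely (a) that the trace functions arise from an \'etale coefficient object on every curve mapping to $X$ (via Corollary~\ref{C:algebraic companion}), and (b) that after passing to a suitable finite \'etale cover these objects are uniformly tame (via Proposition~\ref{P:alter to tame} and Corollary~\ref{C:tame compatibility}, after shrinking $X$ using Lemma~\ref{L:extend algebraic}) --- and then cites Deligne's black box directly. Your ``principal technical obstacle'' (uniform extraction of $H$ across a family of curves) is exactly what that black box contains.

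Two minor simplifications relative to your sketch: the paper does \emph{not} reduce to the irreducible finite-determinant case, and it does \emph{not} invoke Lefschetz slicing (Lemma~\ref{L:slicing}) at all. Deligne's axiomatized argument only needs that the restriction to each curve comes from \emph{some} \'etale coefficient object, with no irreducibility required; this is supplied directly by Corollary~\ref{C:algebraic companion}. Your use of slicing and Corollary~\ref{C:finiteness curve} is a legitimate alternative entry point, but it adds steps (and a base extension) that the paper's packaging avoids.
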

\begin{proof}
We use the axiomatized form of Deligne's argument described in \cite[Remarque~3.10]{deligne-finite}
(see also \cite[Theorem~5.1]{esnault-kerz} for an alternate exposition).
By Lemma~\ref{L:extend algebraic}, there is no harm in replacing $X$ with an open dense subscheme during the argument.

Let $\calE$ be an algebraic coefficient object on $X$.
For each positive integer $n$, consider the function $t_n$ assigning to each $x \in X(k_n)$ the trace of $F^n$ on $\calE_x$. These functions have the following properties.
\begin{enumerate}
\item[(a)]
For each morphism $f: Z \to X$ with $Z$ smooth of dimension 1 (but not necessarily geometrically irreducible over $k$), the functions $t_n \circ f$ arise from an \'etale coefficient object $\calF_Z$ on $Z$. This follows from Corollary~\ref{C:algebraic companion}.
\item[(b)]
There exists a connected finite \'etale cover $X'$ of $X$ such that in (a), if $f$ factors through $X'$, then $\calF_Z$ is tame.
In light of Proposition~\ref{P:alter to tame} and Corollary~\ref{C:tame compatibility},
this becomes true after replacing $X$ with a suitable open dense subscheme (but see Remark~\ref{R:semistable reduction} below).
\end{enumerate}
Given these properties, \cite[Remarque~3.10]{deligne-finite} implies the claim.
\end{proof}

\begin{cor} \label{C:uniformly algebraic2}
For $\calE$ a coefficient object on $X$, the following conditions are equivalent.
\begin{enumerate}
\item[(i)]
$\calE$ is algebraic.
\item[(ii)]
$\calE$ is uniformly algebraic.
\item[(iii)]
Each irreducible constituent of $\calE$ is the twist of an object with determinant of finite order by an algebraic object of rank $1$ pulled back from $k$.
\item[(iv)]
There exists a positive integer $n$ such that the pullback of $\calE$ to $X_n$ satisfies (iii) $($with $k$ replaced by $k_n)$.
\end{enumerate}
In other words, the conclusion of Corollary~\ref{C:uniformly algebraic} $($which applies when $X$ is a curve over $k)$ holds for arbitrary $X$. In particular, part (ii) of Conjecture~\ref{conj:deligne} holds.
\end{cor}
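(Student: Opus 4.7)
The plan is to carry out the proof of Corollary~\ref{C:uniformly algebraic} mutatis mutandis, with Theorem~\ref{T:Deligne finite} playing the role that Theorem~\ref{T:companion dimension 1} plays in the one-dimensional case. Since (ii)$\Rightarrow$(i) and (iii)$\Rightarrow$(iv) are immediate from the definitions, and (i)$\Rightarrow$(ii) is precisely the statement of Theorem~\ref{T:Deligne finite}, only (i)$\Rightarrow$(iii) and (iv)$\Rightarrow$(i) require further argument.

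For (i)$\Rightarrow$(iii), I pass to the semisimplification of $\calE$ and decompose it into irreducible constituents $\calE_1,\dots,\calE_m$. Each $\calE_i$ is algebraic because $P((\calE_i)_x,T)$ is a factor of $P(\calE_x,T)$ in $\overline{\QQ}[T]$. Apply Lemma~\ref{L:decompose to finite order} to each $\calE_i$ to produce a decomposition $\calE_i \cong \calF_i \otimes \calL_i$ with $\det\calF_i$ of finite order and $\calL_i$ a constant rank-$1$ object. Comparing determinants yields
\[
\calL_i^{\otimes \rank(\calE_i)} \;\cong\; \det(\calE_i) \otimes \det(\calF_i)^{\dual},
\]
whose right-hand side is algebraic: the first factor because $\calE_i$ is algebraic, and the second because it has roots of unity as Frobenius eigenvalues. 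Since $\calL_i$ is a constant rank-$1$ object, it is determined by a single Frobenius eigenvalue $\lambda$; as $\lambda^{\rank(\calE_i)}$ is algebraic, so is $\lambda$, and hence so is $\calL_i$.

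For (iv)$\Rightarrow$(i), by hypothesis the pullback $f_n^*\calE$ to $X_n$ satisfies (iii), so each of its irreducible constituents is of the form $\calF \otimes \calL$ with $\det\calF$ of finite order and $\calL$ algebraic. Each such constituent is algebraic, since $\calF$ is then irreducible with finite-order determinant, hence algebraic by Lemma~\ref{L:valuations}(a), while $\calL$ is algebraic by hypothesis. It follows that $f_n^*\calE$ is algebraic on $X_n$. For any $x \in X^\circ$ and any $x_n \in X_n^\circ$ lying over $x$, the reciprocal roots of $P((f_n^*\calE)_{x_n},T)$ are the $[\kappa(x_n)\colon\!\kappa(x)]$-th powers of the reciprocal roots of $P(\calE_x,T)$; since the former are algebraic, so are the latter, yielding algebraicity of $\calE$.

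The hard work has already been done in proving Theorem~\ref{T:Deligne finite}; once that is in hand, the present corollary is a straightforward bookkeeping consequence, combining Lemma~\ref{L:decompose to finite order} and Lemma~\ref{L:valuations} with the elementary observation that algebraicity of a Frobenius eigenvalue is preserved under taking and extracting finite powers.
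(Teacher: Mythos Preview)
Your proof is correct and follows essentially the same route as the paper: the paper's proof invokes Lemma~\ref{L:decompose to finite order} for (i)$\Rightarrow$(iii), Lemma~\ref{L:valuations}(a) for (iv)$\Rightarrow$(i), and Theorem~\ref{T:Deligne finite} for (i)$\Rightarrow$(ii), exactly as you do. You have simply spelled out in more detail why the constant twist in (i)$\Rightarrow$(iii) is algebraic and why algebraicity descends along $f_n$ in (iv)$\Rightarrow$(i), both of which the paper leaves implicit.
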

\begin{proof}
It is again obvious that (ii) implies (i) and that (iii) implies (iv).
By Lemma~\ref{L:decompose to finite order}, (i) implies (iii).
By Lemma~\ref{L:valuations}(a), (iv)  implies (i).
By Theorem~\ref{T:Deligne finite}, (i) implies (ii). 
\end{proof}

\subsection{\'Etale companions}
\label{subsec:etale companions}

We address part (v) of Conjecture~\ref{conj:deligne} by
adapting the method of Drinfeld \cite{drinfeld-deligne} to include the crystalline case.

\begin{lemma} \label{L:lisse valuation}
Let $\calE$ be an object of $\Weil(X) \otimes \overline{\QQ}_\ell$.
Then $\calE$ is a lisse \'etale $\overline{\QQ}_\ell$-sheaf if and only if for each $x \in X^\circ$, the roots of $P(\calE_x, T)$
all have $\ell$-adic valuation $0$; in particular, this holds if $\calE$ is $p$-plain.
\end{lemma}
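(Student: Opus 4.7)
My plan is to leverage the remark at the end of Definition~\ref{D:lisse etale}: a lisse Weil sheaf, presented as a continuous representation $\rho \colon W_X \to \GL(V)$ on a finite-dimensional $L$-vector space (for some finite extension $L$ of $\QQ_\ell$), extends to a lisse \'etale sheaf if and only if $\rho(W_X)$ has compact closure in $\GL(V)$. So the task reduces to showing that this compactness condition is equivalent to $\rho(F_x)$ having eigenvalues of trivial $\ell$-adic valuation for each closed point $x \in X^\circ$.

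The forward direction will be immediate: compactness of $\rho(W_X)$ forces the powers $\rho(F_x)^n$ to stay bounded in $\GL(V)$, so the elementary symmetric functions of the $n$-th powers of the eigenvalues of $\rho(F_x)$ are bounded as $n$ varies over $\ZZ$, which forces each eigenvalue to have $\ell$-adic absolute value~$1$.

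For the converse, I plan to use the hypothesis at just a single closed point $x_0 \in X^\circ$. The key input will be the standard $p$-adic Lie-theoretic fact that an element $g \in \GL(V)$ with eigenvalues all of trivial $\ell$-adic valuation generates a relatively compact cyclic subgroup: via Jordan decomposition $g = su$ with $s$ semisimple and $u$ unipotent commuting, the powers $\{s^n\}$ are bounded because $s$ is diagonalizable over $\overline{L}$ with unit eigenvalues, and $\{u^n\}$ is bounded because $u^n = \sum_{k \geq 0} \binom{n}{k}(u-1)^k$ is a finite sum with $\ell$-adically integral scalar coefficients. Applying this to $\rho(F_{x_0})$, combining with the automatic compactness of $\rho(\pi_1^{\mathrm{et}}(X_{\overline{k}},\overline{x}))$, and using the normality of the latter subgroup in $W_X$, the product $\rho(\pi_1^{\mathrm{et}}(X_{\overline{k}},\overline{x})) \cdot \overline{\langle \rho(F_{x_0}) \rangle}$ will be a compact subgroup of $\GL(V)$ containing the image of the finite-index subgroup $\pi_1^{\mathrm{et}}(X_{\overline{k}},\overline{x}) \cdot \langle F_{x_0} \rangle$ of $W_X$ (of index $[\kappa(x_0)\colon\! k]$). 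Since $\rho(W_X)$ will then be covered by finitely many translates of this compact set, it will have compact closure, as required. The only real obstacle is keeping the topological-group bookkeeping straight; no deep input is needed.

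Finally, the ``in particular'' clause is immediate from the definition of $p$-plain in Conjecture~\ref{conj:deligne}(iii): since $\ell \neq p$, the $\ell$-adic place of $\overline{\QQ}$ induced by the given embedding into $\overline{\QQ}_\ell$ restricts to a finite place of the coefficient number field not lying above $p$, so $p$-plainness of $\calE$ guarantees $\ell$-adic valuation $0$ for every Frobenius eigenvalue, and the main equivalence then applies.
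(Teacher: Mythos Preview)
Your proof is correct but follows a genuinely different route from the paper's.

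For the forward direction, you and the paper argue identically via compactness of the image. For the converse, the paper first reduces to the case where $\calE$ is irreducible, then invokes the structural fact (quoted in Definition~\ref{D:lisse etale} from \cite[Proposition~1.3.14]{deligne-weil2}) that an irreducible lisse Weil sheaf is \'etale if and only if its determinant is, thereby reducing to rank~$1$. It then writes a rank-$1$ object as a finite-order object tensored with a constant object (Lemma~\ref{L:decompose to finite order}) and finishes with the trivial case $X = \Spec(k)$. By contrast, you argue directly and uniformly in the rank: the Jordan-decomposition computation shows that any $g \in \GL(V)$ with unit eigenvalues generates a relatively compact cyclic subgroup, and combining this with the automatically compact image of the geometric fundamental group produces a compact subgroup of finite index in $\rho(W_X)$.

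Your approach is more elementary and self-contained (it needs neither the reduction to rank~$1$ nor Lemma~\ref{L:decompose to finite order}), and it yields the slightly sharper observation that, for $X$ connected, the hypothesis at a \emph{single} closed point already suffices. The paper's approach, on the other hand, is shorter given the structural lemmas already in hand and fits the paper's general pattern of reducing to rank~$1$. Your handling of the ``in particular'' clause is also correct.
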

\begin{proof}
We may assume that $\calE$ is irreducible. 
If $\calE$ is a lisse \'etale $\overline{\QQ}_\ell$-sheaf, then the image of $W_X$ in the corresponding representation is compact; this implies that the roots of $P(\calE_x, T)$ all have $\ell$-adic valuation $0$.

To check the converse implication, as per Definition~\ref{D:lisse etale}
we may further assume that $\calE$ is of rank 1.
By Lemma~\ref{L:decompose to finite order}, we may write $\calE = \calF \otimes \calL$ where
$\calF$ is of finite order and $\calL$ is constant.
We may then check the claim after replacing $\calE$ with $\calL$; that is, we may assume that $X = \Spec(k)$,
for which the desired statement is obvious.
(Compare \cite[Proposition~3.1.16]{daddezio}.)
\end{proof}

\begin{theorem} \label{T:Weil companion}
Let $\calE$ be a coefficient object on $X$.
\begin{enumerate}
\item[(a)]
If $\calE$ is algebraic, then it has an \'etale companion. 
\item[(b)]
If $\calE$ is irreducible with determinant of finite order, then it has an \'etale companion which is an irreducible lisse \'etale $\overline{\QQ}_\ell$-sheaf with determinant of finite order.
\end{enumerate}
In particular, part (v) of Conjecture~\ref{conj:deligne} holds.
\end{theorem}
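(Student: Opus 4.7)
The plan is to reduce (a) to (b), prove (b) by adapting Drinfeld's construction from \cite{drinfeld-deligne} so as to accommodate a crystalline source $\calE$, and then verify the extra structure of the resulting companion. For the reduction, any companion of the semisimplification of $\calE$ is a companion of $\calE$, so by Corollary~\ref{C:uniformly algebraic2} we may write each irreducible constituent as the twist of an irreducible object with finite-order determinant by a constant algebraic rank-$1$ coefficient object; such a constant twist has étale companions by Corollary~\ref{C:finite order companion}, and tensoring with the companion supplied by (b) yields (a).

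Now fix $\ell' \neq p$ and assume the hypotheses of (b). Apply Corollary~\ref{C:bound coefficient extension} to produce a finite extension $L_1$ of $\QQ_{\ell'}$, depending only on $\rank(\calE)$ and on the number field $E$ over which $\calE$ is uniformly algebraic (existence by Corollary~\ref{C:uniformly algebraic2}), in which every $\ell'$-adic curve-companion supplied by Theorem~\ref{T:one-dim case} lives. Next invoke Proposition~\ref{P:alter to tame} to fix an alteration $f \colon X' \to X$ such that $X'$ admits a good compactification $\overline{X}'$ with boundary $Z$, and $f^*\calE$ is docile along $Z$. For every curve $C$ in $X'$, Corollary~\ref{C:algebraic companion} yields an $\ell'$-adic companion $\calE'_C \in \Weil(C) \otimes L_1$ of $f^*\calE|_C$, unique up to semisimplification by Theorem~\ref{T:chebotarev}(b); the ramification compatibility developed just after Proposition~\ref{P:GOS formula} shows that each $\calE'_C$ is itself docile along the boundary of the closure of $C$ in $\overline{X}'$.

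The central step is to assemble the coherent system $\{\calE'_C\}_C$ into a single lisse Weil $L_1$-sheaf $\calE'$ on $X'$, and this is the main obstacle. We adapt Drinfeld's Wiesend-style construction from \cite{drinfeld-deligne}: the compatibility of the $\calE'_C$ at common closed points is automatic (they share Frobenius characteristic polynomials with $f^*\calE$), while the uniform docility along $Z$ plus the uniform coefficient field $L_1$ supplies the remaining input to glue the curve-system into a continuous representation of $\pi_1^{\mathrm{et}}(X')$ on a finite-dimensional $L_1$-vector space. In Drinfeld's original $\ell \neq p$ argument, the uniform boundary control came from trivializing a residual mod-$\ell$ representation on a single finite étale cover; that shortcut is unavailable when $\calE$ is crystalline, and semistable reduction (Proposition~\ref{P:alter to tame}) is the key replacement. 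Once $\calE'$ is built, Galois descent along the generically étale $f$ (with Galois equivariance forced by Theorem~\ref{T:chebotarev}(b)) produces an $\ell'$-adic companion of $\calE$ on an open dense $U \subset X$, and Corollary~\ref{C:open immersion companion descent} extends it to all of $X$.

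It remains to verify the properties claimed in (b). Irreducibility of the resulting $\ell'$-adic companion follows from Theorem~\ref{T:chebotarev}(a), and its determinant has finite order by Corollary~\ref{C:companion det} applied to the rank-$1$ companions of $\det(\calE)$. Finally, since $\ell' \neq p$, Lemma~\ref{L:valuations}(b) shows that for every $x \in X^\circ$ the reciprocal roots of $P(\calE_x, T)$ have $\ell'$-adic valuation $0$; Lemma~\ref{L:lisse valuation} then upgrades the lisse Weil companion to a bona fide lisse étale $\overline{\QQ}_{\ell'}$-sheaf. This gives (b), and consequently part (v) of Conjecture~\ref{conj:deligne}.
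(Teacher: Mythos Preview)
Your proof is broadly correct in spirit and uses the same essential ingredients as the paper (Drinfeld's gluing theorem from \cite{drinfeld-deligne}, uniform tameness via Proposition~\ref{P:alter to tame} and Corollary~\ref{C:tame compatibility}, and the final clean-up via Theorem~\ref{T:chebotarev}, Corollary~\ref{C:companion det}, Lemma~\ref{L:valuations}, Lemma~\ref{L:lisse valuation}). However, you take an unnecessary detour through the alteration $X'$, and this detour leaves a genuine gap.

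The paper applies Drinfeld's black box \cite[Theorem~2.5]{drinfeld-deligne} \emph{directly on $X$}: its input is the family $\{P(\calE_x,T)\}_{x \in X^\circ}$, and its hypotheses are (a) that curve companions exist on curves in $X$, and (b) that these curve companions become tame after pullback along some fixed alteration. The alteration $f: X' \to X$ enters only to verify hypothesis (b); Drinfeld's theorem then outputs a lisse \'etale sheaf on $X$ itself, with no descent needed. By contrast, you first glue on $X'$ to obtain $\calE'$, then attempt to descend along $f$. This is where the gap lies: the alteration $f$ provided by Proposition~\ref{P:alter to tame} is merely generically \'etale, not Galois, so ``Galois descent'' is not directly available. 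Even after passing to a Galois closure over the \'etale locus, Theorem~\ref{T:chebotarev}(b) only furnishes isomorphisms $g^*\calE' \cong \calE'$ for each $g$ in the Galois group; it does not supply a cocycle, and the automorphism group of a semisimple object can obstruct the existence of one. This obstruction can be handled (the paper does an analogous descent later, in Lemma~\ref{L:finite etale companion descent} and Corollary~\ref{C:alteration companion descent}, via the adjunction $\calE \hookrightarrow f_* f^* \calE$ rather than via cocycles), but that argument is not what you wrote, and in any event it is work that the paper's direct approach avoids entirely.

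A minor additional point: your invocation of Corollary~\ref{C:bound coefficient extension} to produce a uniform coefficient field $L_1$ is not needed for Drinfeld's theorem as cited by the paper, and since that corollary is stated in \S\ref{sec:dim 1} only for $\dim(X)=1$ (with the extension to general $X$ appearing as Corollary~\ref{C:bound coefficient extension2} after Theorem~\ref{T:chebotarev}), you would want to cite the latter if you retain this step.
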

\begin{proof}
By Lemma~\ref{L:decompose to finite order}, it suffices to prove (b).
It was shown by Drinfeld \cite[Theorem~2.5]{drinfeld-deligne} that a lisse \'etale $\overline{\QQ}_\ell$-sheaf on $X$ can be constructed out of any family of Frobenius characteristic polynomials at closed points with the following properties
(analogous to the ones used in the proof of Theorem~\ref{T:Deligne finite}).
\begin{enumerate}
\item[(a)]
On any curve in $X$, there is a (not necessarily irreducible) lisse \'etale $\overline{\QQ}_\ell$-sheaf giving rise to the specified characteristic polynomials.
\item[(b)]
The sheaves occurring in (a) obey a uniformity property: their pullbacks along some fixed alteration of $X$ are all tame.
\end{enumerate}
The polynomials $P(\calE_x, T)$ satisfy these conditions: condition (a) follows from Corollary~\ref{C:algebraic companion}, noting that Lemma~\ref{L:valuations}(b) and Lemma~\ref{L:lisse valuation} together guarantee that we get a lisse \'etale  $\overline{\QQ}_\ell$-sheaf, not just a lisse Weil  $\overline{\QQ}_\ell$-sheaf;
condition (b) follows from Proposition~\ref{P:alter to tame} and Corollary~\ref{C:tame compatibility}.

This proves that $\calE$ has an \'etale companion $\calF$, which by Theorem~\ref{T:chebotarev}(a) is again irreducible.
By Corollary~\ref{C:companion det}, $\det(\calF)$ is of finite order.
\end{proof}
\begin{cor} \label{C:companion ell}
Conjecture~\ref{conj:companion} holds when $\ell' \neq p$,
and Theorem~\ref{T:deligne conj2} holds.
\end{cor}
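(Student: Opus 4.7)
The proof is essentially a bookkeeping exercise that assembles the various results established earlier in the paper, so the plan is to enumerate the parts of Conjecture~\ref{conj:deligne} and point to the appropriate previous statement for each. There is no single hard step; the difficulty has been absorbed into Theorem~\ref{T:Weil companion} (which packages Drinfeld's method together with the crystalline inputs) and into Corollary~\ref{C:uniformly algebraic2} (which packages Deligne's field-of-definition argument). What remains is to verify that these ingredients cover all of (i)--(v) uniformly in the coefficient prime $\ell$.

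For Conjecture~\ref{conj:companion} with $\ell' \neq p$: given any algebraic coefficient object $\calE$ on $X$, Theorem~\ref{T:Weil companion}(a) directly produces an \'etale companion; the proof of that theorem proceeds through Drinfeld's construction applied to the family $\{P(\calE_x, T)\}_{x \in X^\circ}$, and this construction yields a lisse \'etale $\overline{\QQ}_{\ell'}$-sheaf for whichever prime $\ell' \neq p$ one wishes to target. So the first assertion of the corollary is immediate.

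For Theorem~\ref{T:deligne conj2}, I would assume $\calE$ is an irreducible coefficient object with determinant of finite order (so that all five parts of the conjecture apply) and handle the parts in turn. Part (i) follows either from Theorem~\ref{T:weight filtration}(a), which forces $\calE$ to be $\iota$-pure of some weight whose multiple equals the weight of the finite-order object $\det(\calE)$, or equivalently from the infinite-place assertion of Lemma~\ref{L:valuations}(b). Part (ii) is Corollary~\ref{C:uniformly algebraic2}. Parts (iii) and (iv) are the finite-place and $p$-adic assertions of Lemma~\ref{L:valuations}(b) and (c) respectively. Part (v) is Theorem~\ref{T:Weil companion}(b), which moreover delivers an irreducible companion with determinant of finite order.

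A minor point worth flagging in the write-up: one should observe that each of the statements invoked is genuinely indifferent to whether the given $\ell$ equals $p$ or not, so the argument applies uniformly whether $\calE$ is an \'etale or a crystalline coefficient object. In particular, Lemma~\ref{L:valuations} and Corollary~\ref{C:uniformly algebraic2} were formulated precisely to allow this uniform appeal. The only aspect of Conjecture~\ref{conj:deligne} not addressed here is part (vi), i.e.\ the construction of crystalline companions of \'etale coefficient objects, which is explicitly deferred to the sequel paper.
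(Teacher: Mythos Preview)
Your proposal is correct and matches the paper's own proof essentially line for line: the paper likewise cites Theorem~\ref{T:Weil companion}(a) for Conjecture~\ref{conj:companion} with $\ell' \neq p$, and then Theorem~\ref{T:weight filtration}(a), Corollary~\ref{C:uniformly algebraic2}, Lemma~\ref{L:valuations}, and Theorem~\ref{T:Weil companion}(b) for parts (i)--(v) respectively. Your additional remarks on uniformity in the coefficient prime are accurate and harmless.
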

\begin{proof}
Theorem~\ref{T:Weil companion}(a) implies that Conjecture~\ref{conj:companion} holds when $\ell' \neq p$.
As for Theorem~\ref{T:deligne conj2}, 
see Theorem~\ref{T:weight filtration}(a) for part (i);
see Corollary~\ref{C:uniformly algebraic2} for part (ii);
see Lemma~\ref{L:valuations} for parts (iii), (iv);
and see Theorem~\ref{T:Weil companion}(b) for part (v).
\end{proof}

\begin{remark} \label{R:semistable reduction}
As noted above, the semistable reduction theorem for overconvergent $F$-isocrystals \cite[Theorem~7.6]{kedlaya-isocrystals} was originally proved in terms of an uncontrolled alteration, which only becomes finite \'etale over some open dense subscheme of the original variety. However, for $k$ finite, Theorem~\ref{T:Weil companion} makes it possible
(in conjunction with Remark~\ref{R:tamely}) to choose a finite \'etale cover of $X$ with the property that any alteration factoring through this cover suffices to achieve semistable reduction; see \cite[Remark~4.4]{abe-esnault} or \cite[Remark~7.9]{kedlaya-isocrystals}.
\end{remark}

\subsection{Reductions for crystalline companions}
\label{sec:reductions}

One unexpected side benefit of the construction of \'etale companions is that it helps with some key reductions in the construction of crystalline companions. These reductions will be used in \cite{kedlaya-companions2}.

The following is essentially \cite[Proposition~4.3.6]{daddezio}.

\begin{lemma} \label{L:constituent companions}
Let $\calE, \calF$ be two algebraic coefficient objects which are companions.
Then there is a bijection between the irreducible constituents of $\calE$ and those of $\calF$, in which
any two objects in correspondence are again companions.
\end{lemma}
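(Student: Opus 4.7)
The plan is to reduce to the \'etale category using Theorem~\ref{T:Weil companion}, then apply Tsuzuki's theorem (Theorem~\ref{T:chebotarev}).

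First, I would observe that the Frobenius characteristic polynomials $P(\calE_x, T)$ depend only on the semisimplifications of $\calE$ and $\calF$, since they factor as products over composition factors. Both the companionship hypothesis and the conclusion (a bijection of irreducible constituents) are unchanged upon passing to semisimplifications, so I may assume $\calE \cong \bigoplus_{i \in I} \calE_i$ and $\calF \cong \bigoplus_{j \in J} \calF_j$ with every summand irreducible. Each summand is then itself algebraic: factoring $P(\calE_x, T) = \prod_i P(\calE_{i,x}, T)$ in $\overline{\QQ}_*[T]$ and using unique factorization in $\overline{\QQ}[T]$ forces each $P(\calE_{i,x},T)$ into $\overline{\QQ}[T]$.

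Next, I would use Theorem~\ref{T:Weil companion}(a) to produce, for each $i$, an \'etale companion $\calE_i^{\mathrm{et}}$ of $\calE_i$, and similarly $\calF_j^{\mathrm{et}}$ for each $j$. By Theorem~\ref{T:chebotarev}(a), each $\calE_i^{\mathrm{et}}$ and each $\calF_j^{\mathrm{et}}$ is again irreducible. The direct sums $\bigoplus_i \calE_i^{\mathrm{et}}$ and $\bigoplus_j \calF_j^{\mathrm{et}}$ are therefore semisimple \'etale companions of $\calE$ and $\calF$ respectively. Since the companionship relation is transitive on the level of characteristic polynomials, these two direct sums are companions of each other inside the single category $\Weil(X) \otimes \overline{\QQ}_\ell$. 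Applying Theorem~\ref{T:chebotarev}(b) yields
\[
\bigoplus_i \calE_i^{\mathrm{et}} \;\cong\; \bigoplus_j \calF_j^{\mathrm{et}},
\]
and the uniqueness of decomposition of a semisimple object into irreducibles then furnishes a bijection $\sigma \colon I \to J$ with $\calE_i^{\mathrm{et}} \cong \calF_{\sigma(i)}^{\mathrm{et}}$ for every $i$.

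Finally, by construction $\calE_i$ is a companion of $\calE_i^{\mathrm{et}}$ and $\calF_{\sigma(i)}^{\mathrm{et}}$ is a companion of $\calF_{\sigma(i)}$; combined with the isomorphism $\calE_i^{\mathrm{et}} \cong \calF_{\sigma(i)}^{\mathrm{et}}$, transitivity of companionship gives that $\calE_i$ and $\calF_{\sigma(i)}$ are companions, which is the claimed bijection. There is no serious obstacle here: the whole proof is a clean consequence of the two main theorems already established (\'etale companions exist, and Tsuzuki's weight-theoretic uniqueness); the only technical care required is to keep the fixed identifications of $\overline{\QQ}$ in the various full coefficient fields consistent through each invocation of transitivity.
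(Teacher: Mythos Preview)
Your proof is correct and follows essentially the same approach as the paper: reduce to the \'etale side via Theorem~\ref{T:Weil companion}, then invoke Theorem~\ref{T:chebotarev} to match constituents. The only organizational difference is that the paper first treats the case where one of $\calE,\calF$ is already \'etale (producing \'etale companions only for the constituents of the other object) and then handles the both-crystalline case by inserting a single intermediate \'etale companion, whereas you symmetrize by producing \'etale companions for the constituents of both $\calE$ and $\calF$ from the outset; this lets you avoid the case split but is otherwise the same argument.
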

\begin{proof}
Suppose first that at least one of $\calE$ or $\calF$ is an \'etale coefficient object, say $\calF$.
Apply Theorem~\ref{T:Weil companion} to 
each irreducible constituent of $\calE$ to obtain objects in the same category as $\calF$; by Theorem~\ref{T:chebotarev}(a), these objects are all irreducible.
Taking their sum yields obtain another companion $\calF'$ of $\calE$ in the same category as $\calF$;
by Theorem~\ref{T:chebotarev}(b), $\calF$ and $\calF'$ have the same semisimplification. 

Suppose next that $\calE$ and $\calF$ are both crystalline objects. 
By Theorem~\ref{T:Weil companion}, we may insert an \'etale companion $\calG$ between $\calE$ and $\calF$, and then apply the previous paragraph to pass from $\calE$ to $\calG$ to $\calF$.
\end{proof}

\begin{lemma} \label{L:finite etale companion descent}
Let $f: X' \to X$ be a finite \'etale morphism.
\begin{enumerate}
\item[(a)]
Let $\calE$ be an irreducible algebraic coefficient object on $X$,
and let $\calF$ be a semisimple companion of $f^* \calE$. Then 
there exist a companion $\calF_0$ of $\calE$ and an isomorphism $f^* \calF_0 \cong \calF$.
\item[(b)]
Let $\calE$ be a semisimple algebraic coefficient object on $X'$,
and let $\calF$ be a semisimple companion of $f_* \calE$. Then there exists a companion $\calF_0$ of $\calE$
which is a direct summand of $f^* \calF$.
\end{enumerate}
\end{lemma}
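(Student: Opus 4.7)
Both parts rely on the good behavior of $f_*$ and $f^*$ with respect to the companion relation under a finite \'etale morphism. The key input is the induction formula at fibers,
\[
P((f_*\calG)_x, T) = \prod_{y \in f^{-1}(x)} P(\calG_y, T^{[\kappa(y):\kappa(x)]}),
\]
which shows that $f_*$ sends companions to companions; $f^*$ does so trivially. Combined with the projection formula and the invertibility of $\deg f$ in characteristic zero, this exhibits $\calE$ as a direct summand of $f_*f^*\calE \cong \calE \otimes f_*\calO_{X'}$. Dually, the open-closed decomposition $X' \times_X X' = \Delta(X') \sqcup Z$ (with $\Delta$ open since $f$ is \'etale) decomposes $f^*f_*\calE = (p_1)_*p_2^*\calE$ and exhibits $\calE$ as a direct summand of $f^*f_*\calE$. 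Moreover, both $f_*$ and $f^*$ preserve semisimplicity by standard restriction/induction arguments in characteristic zero.

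For (a), the plan is to form $f_*\calF$, which is a semisimple companion of $f_*f^*\calE$ on $X$. By Lemma~\ref{L:constituent companions}, its irreducible constituents correspond bijectively to those of $f_*f^*\calE$ via the companion relation; since $\calE$ occurs among the latter, there is an irreducible constituent $\calF_0$ of $f_*\calF$ which is a companion of $\calE$. Since $\calE$ is irreducible, $\calF_0$ is irreducible by Theorem~\ref{T:chebotarev}(a), so $f^*\calF_0$ is a semisimple companion of $f^*\calE$ (using that finite \'etale pullback of an irreducible object is semisimple); Theorem~\ref{T:chebotarev}(b) then yields the isomorphism $f^*\calF_0 \cong \calF$.

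For (b), I would symmetrically form $f^*\calF$, a semisimple companion of $f^*f_*\calE$. Writing $\calE = \bigoplus_i \calE_i^{m_i}$ with distinct irreducible $\calE_i$, each $\calE_i$ appears in $f^*f_*\calE$ with multiplicity at least $m_i$ (since $\calE$ is a direct summand), so Lemma~\ref{L:constituent companions} matches each $\calE_i$ to an irreducible constituent $\calF_i$ of $f^*\calF$ occurring with multiplicity at least $m_i$. Setting $\calF_0 := \bigoplus_i \calF_i^{m_i}$ yields a direct summand of $f^*\calF$ which is a companion of $\calE$.

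The main technical point to verify is that, in both the \'etale and crystalline settings, finite \'etale $f_*$ and $f^*$ preserve the category of coefficient objects, satisfy the induction formula at fibers, and preserve semisimplicity. In the \'etale case these are standard; in the crystalline case they should follow from the functoriality of overconvergent $F$-isocrystals under finite \'etale morphisms, but will require careful citation of the foundational results in \cite{kedlaya-isocrystals}.
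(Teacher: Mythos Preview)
Your proposal is correct and follows essentially the same route as the paper: form $f_*\calF$ (resp.\ $f^*\calF$), observe it is a companion of $f_*f^*\calE$ (resp.\ $f^*f_*\calE$), use that $\calE$ is a direct summand of the latter, and invoke Lemma~\ref{L:constituent companions} to extract the desired companion $\calF_0$; then in part~(a) conclude $f^*\calF_0\cong\calF$ via Theorem~\ref{T:chebotarev}(b). The one place where the paper is more specific than your ``standard restriction/induction'' remark is the justification that $f^*\calF_0$ is semisimple in the crystalline case: the paper appeals to Proposition~\ref{P:global monodromy} (the radical of $\overline{G}(\calF_0)^\circ$ is unipotent, so $\overline{G}(\calF_0)^\circ$ is semisimple when $\calF_0$ is) together with Proposition~\ref{P:component group} (so $\overline{G}(f^*\calF_0)$ has the same connected part), which is exactly the ``careful citation'' you anticipated needing.
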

\begin{proof}
We first treat (a).
Since $f_*\calF$ is a companion of $f_* f^* \calE$
and $\calE$ arises as a direct summand of $f_* f^* \calE$, we may apply Lemma~\ref{L:constituent companions}
to deduce that $\calE$ has an irreducible companion $\calF_0$ which is a constituent of $f_* \calF$.
By Proposition~\ref{P:global monodromy}, $f^* \calF_0$ is semisimple
(note that $\overline{G}(\calF_0)$ and $\overline{G}(f^* \calF_0)$ have the same connected part thanks to 
Proposition~\ref{P:component group}).
Since $f^* \calF_0$ and $\calF$ are both semisimple companions of $f^* \calE$, by Theorem~\ref{T:chebotarev} they are isomorphic to each other.

We next treat (b). 
By Proposition~\ref{P:global monodromy} again, $f^* \calF$ is semisimple.
Since $f^* \calF$ is a semisimple companion of $f^* f_* \calE$
and $\calE$ occurs as a direct summand of $f^* f_* \calE$, we may again apply Lemma~\ref{L:constituent companions}
to deduce that every constituent of $\calE$ has an irreducible companion which is a direct summand of $f^* \calF$.
This proves the claim.
\end{proof}

\begin{cor} \label{C:alteration companion descent}
Let $f: X' \to X$ be an alteration.
Let $\calE$ be an algebraic coefficient object on $X$.
If $f^* \calE$ admits a crystalline companion, then so does $\calE$.
\end{cor}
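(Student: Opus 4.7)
The plan is to use the fact that an alteration is finite étale over a dense open, so we can combine Lemma~\ref{L:finite etale companion descent}(a) (to descend a crystalline companion along the finite étale part) with Corollary~\ref{C:open immersion companion descent} (to extend across the complement). The bookkeeping will reduce to the irreducible case via Lemma~\ref{L:constituent companions}.

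First I would reduce to the case where $\calE$ is irreducible. Let $\calF$ be a crystalline companion of $f^* \calE$, which we may assume semisimple by Theorem~\ref{T:chebotarev}(b). For each irreducible constituent $\calE_i$ of $\calE$, the irreducible constituents of $f^* \calE_i$ appear among those of $f^* \calE$, so by Lemma~\ref{L:constituent companions} each of them has a crystalline companion occurring as an irreducible summand of $\calF$; summing these yields a crystalline companion of $f^* \calE_i$. Hence, if we can construct crystalline companions of each irreducible $\calE_i$ separately, their direct sum is a crystalline companion of $\calE^{\mathrm{ss}}$, and thus of $\calE$.

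Next, assume $\calE$ is irreducible. Since $f$ is an alteration, there exists an open dense $U \subseteq X$ such that $U' := f^{-1}(U) \to U$ is finite étale. By Lemma~\ref{L:restriction functor}, $\calE|_U$ remains irreducible, and the (semisimplification of the) restriction $\calF|_{U'}$ is a semisimple crystalline companion of $(f|_{U'})^*(\calE|_U)$. Applying Lemma~\ref{L:finite etale companion descent}(a) to the finite étale morphism $f|_{U'}\colon U' \to U$, I obtain a crystalline companion $\calG_U$ of $\calE|_U$ on $U$; by Theorem~\ref{T:chebotarev}(a), $\calG_U$ is irreducible, hence semisimple.

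Finally, I would invoke Corollary~\ref{C:open immersion companion descent} on the open immersion $U \hookrightarrow X$ with the pair $(\calE, \calG_U)$: since $\calG_U$ is a semisimple companion of $\calE|_U$, it extends to a crystalline coefficient object $\calG$ on $X$ which is a companion of $\calE$, completing the argument. I do not foresee a serious obstacle, since every ingredient is already in place in the excerpt; the only conceptual point worth flagging is that we use just the generic finite-étaleness of $f$ (properness and surjectivity play no role), precisely because the extension step is handled by the cut-by-curves criterion underlying Corollary~\ref{C:open immersion companion descent}.
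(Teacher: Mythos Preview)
Your proof is correct and takes a genuinely simpler route than the paper's. After obtaining the crystalline companion $\calG_U$ on the finite-\'etale locus $U$ via Lemma~\ref{L:finite etale companion descent}(a), you extend it to all of $X$ in one stroke using Corollary~\ref{C:open immersion companion descent}. The paper instead proceeds in two stages. It first treats the case where $f$ is (finite) flat, and in that case it does \emph{not} invoke Corollary~\ref{C:open immersion companion descent}; rather, it uses the explicit isomorphism $f^* \calF_0 \cong \calF|_{U'}$ furnished by Lemma~\ref{L:finite etale companion descent}(a) to build a descent datum for $\calF$ on the hypercovering of $U$ generated by $f|_{U'}$, extends this datum over all of $X$ by full faithfulness (Lemma~\ref{L:restriction functor}), and then appeals to faithfully flat descent for modules to produce the extension of $\calF_0$. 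Only afterwards does it handle a general alteration by restricting to the flat locus (whose complement in $X$ has codimension $\geq 2$ since $X'$ is smooth) and invoking the codimension-$2$ extension clause of Lemma~\ref{L:restriction functor}. Your approach bypasses the descent-datum bookkeeping entirely and, as you correctly observe, uses only the generic finite-\'etaleness of $f$; the paper's argument, by contrast, genuinely exploits the existence of the companion $\calF$ over all of $X'$ together with flatness of $f$.
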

\begin{proof}
We may assume from the outset that $\calE$ is irreducible.
Suppose first that $f$ is flat (and hence finite and separable). 
Let $\calF$ be a crystalline companion of $f^* \calE$.
Let $U$ be the open dense subscheme of $X$ over which $f$ is \'etale (and hence finite \'etale because $f$ is proper).
By Lemma~\ref{L:finite etale companion descent}(a), there exist a crystalline companion
$\calF_0$ for $\calE|_U$ and an isomorphism $f^* \calF_0 \cong \calF$. 
This isomorphism gives rise to a descent datum for $\calF$ on the hypercovering of $U$ generated by $f|_U$;
by the full faithfulness of restriction (Lemma~\ref{L:restriction functor}), this descent datum extends over $X$.
By faithfully flat descent for modules, the descent datum arises from an object of $\FIsoc^\dagger(X) \otimes \overline{\QQ}_p$ whose restriction to $U$ is isomorphic to $\calF_0$; by
Lemma~\ref{L:restriction functor} again, $\calF_0$ must itself extend to $X$.
By Corollary~\ref{C:extend companions}, $\calF_0$ is a crystalline companion of $\calE$, as desired.

We now treat the general case.
Let $U$ be the open dense subscheme of $X$ over which $f$ is flat; since $X'$ is smooth,
the complement $Z := X \setminus U$ has codimension at least 2 in $X$.
By the previous paragraph, $\calE|_U$ admits a crystalline companion $\calF$;
by Lemma~\ref{L:restriction functor}, $\calF$ extends uniquely to $X$.
By Corollary~\ref{C:extend companions}, $\calF$ is a crystalline companion of $\calE$, as desired.
\end{proof}

\begin{cor}
To prove Conjecture~\ref{conj:deligne}, it suffices to prove the following: for every $X$ admitting a smooth compactification $\overline{X}$ and every docile \'etale coefficient object $\calE$ on $X$ which is irreducible with finite determinant,
there exists an open dense subscheme $U$ of $X$ such that $\calE|_U$ has a crystalline companion.
\end{cor}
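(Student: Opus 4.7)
The plan is to reduce Conjecture~\ref{conj:deligne} to the assumed statement by a formal sequence of twists, decompositions, extensions and descents, all of whose ingredients have been established earlier in the paper.

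By Corollary~\ref{C:companion ell}, parts (i)--(v) of Conjecture~\ref{conj:deligne} are already unconditional, so only part (vi) remains. Given an irreducible coefficient object $\calE$ on $X$ with determinant of finite order, we must produce a crystalline companion; if $\calE$ is itself crystalline there is nothing to prove, so we may assume $\calE$ is an \'etale coefficient object with coefficient prime $\ell \neq p$. By Proposition~\ref{P:alter to tame}, choose an alteration $f\colon X' \to X$ such that $X'$ admits a good compactification $(\overline{X}', Z')$ — in particular $\overline{X}'$ is a smooth compactification of $X'$ — and $f^*\calE$ is docile.

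Replace $f^*\calE$ by its semisimplification (which preserves Frobenius characteristic polynomials at closed points, hence preserves the companion question) and write $(f^*\calE)^{\mathrm{ss}} = \bigoplus_j \calE'_j$ as a sum of irreducible \'etale coefficient objects on $X'$; each $\calE'_j$ is docile by Remark~\ref{R:tame in codim 1} and algebraic by Corollary~\ref{C:uniformly algebraic2}. By Lemma~\ref{L:decompose to finite order} we may choose for each $j$ a constant rank-$1$ \'etale object $\calL_j$ such that $\calE'_j \otimes \calL_j$ has determinant of finite order; since $\calL_j$ is constant it is docile everywhere, so $\calE'_j \otimes \calL_j$ remains an irreducible docile \'etale coefficient object with finite-order determinant on $X'$. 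Moreover, by Corollary~\ref{C:finite order companion} the algebraic rank-$1$ object $\calL_j$ admits a crystalline companion $\calL_j^{\mathrm{cr}}$.

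Apply the hypothesized statement to $(X', \calE'_j \otimes \calL_j)$ to obtain, for each $j$, an open dense $U_j \subset X'$ and a crystalline companion $\calG_j$ of $(\calE'_j \otimes \calL_j)|_{U_j}$. Then $\calG_j \otimes (\calL_j^{\mathrm{cr}})^{\dual}|_{U_j}$ is a crystalline companion of $\calE'_j|_{U_j}$. Setting $U := \bigcap_j U_j$ and semisimplifying each $\calG_j$ if necessary, the direct sum $\bigoplus_j (\calG_j \otimes (\calL_j^{\mathrm{cr}})^{\dual})|_U$ is a semisimple crystalline companion of $f^*\calE|_U$. By Corollary~\ref{C:open immersion companion descent} this extends to a crystalline companion of $f^*\calE$ on all of $X'$, and Corollary~\ref{C:alteration companion descent} then descends this to a crystalline companion of $\calE$ on $X$.

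No genuine difficulty remains beyond what is stated in the hypothesis: the whole argument is a formal assembly of alteration to docility (Proposition~\ref{P:alter to tame}), constant rank-$1$ companions (Corollary~\ref{C:finite order companion}), open-immersion extension (Corollary~\ref{C:open immersion companion descent}), and alteration descent (Corollary~\ref{C:alteration companion descent}). The only mildly non-obvious maneuver is the need to break $f^*\calE$ into irreducible constituents, since alterations need not preserve irreducibility; twisting each constituent to finite-order determinant and then undoing the twist crystallinely via $\calL_j^{\mathrm{cr}}$ handles this cleanly.
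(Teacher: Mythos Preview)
Your argument is correct and follows essentially the same route as the paper: alter to achieve docility on a variety with good compactification (Proposition~\ref{P:alter to tame}), decompose into irreducibles and twist to finite-order determinant, apply the hypothesis on open dense subsets, extend via Corollary~\ref{C:open immersion companion descent}, and descend via Corollary~\ref{C:alteration companion descent}. The paper organizes the same ingredients slightly differently---it first packages the hypothesis into the intermediate statement ``every docile \'etale coefficient object has a crystalline companion'' and then alters and descends---but the logical content is identical.

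One small omission: Conjecture~\ref{conj:deligne}(vi) asks not merely for a crystalline companion but for one which is irreducible with determinant of finite order. You produce a companion but do not verify these extra properties. The paper closes this gap by invoking Corollary~\ref{C:companion det} and Lemma~\ref{L:constituent companions} (or equivalently Theorem~\ref{T:chebotarev}(a)); you should add a sentence to the same effect.
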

\begin{proof}
By Lemma~\ref{L:restriction functor} (for preservation of irreducibility upon restriction to an open dense subscheme),
Remark~\ref{R:tame in codim 1} (preservation of docileness upon passage to subquotients),
and Corollary~\ref{C:open immersion companion descent}, the hypothesis would imply that every docile $\ell$-adic coefficient object has a crystalline companion.
By Proposition~\ref{P:alter to tame} and Corollary~\ref{C:alteration companion descent}, we would then obtain
Conjecture~\ref{conj:companion} for $\ell' = p$. By Corollary~\ref{C:companion det} and Lemma~\ref{L:constituent companions},
the crystalline companion of an \'etale coefficient object which is irreducible of finite determinant is again  irreducible of finite determinant. In light of Theorem~\ref{T:deligne conj2}, this proves the claim.
\end{proof}

\subsection{Comparison with Abe--Esnault and Lefschetz slicing}
\label{sec:abe-esnault}

As mentioned above, there is a strong overlap between the results described in this section and those of Abe--Esnault
\cite{abe-esnault}; the main differences are in the handling of Lefschetz slicing. We first indicate the main differences, then indicate how to reconcile the results.

\begin{remark}
For tame crystalline coefficient objects, Abe--Esnault prove a form of the slicing principle 
without recourse to weights \cite[Theorem~2.5]{abe-esnault}.
This then yields the existence of \'etale companions for tame crystalline coefficient objects
\cite[Proposition~2.7]{abe-esnault} using the arguments of Deligne and Drinfeld described above.

Abe--Esnault then use the existence of \'etale companions in the tame case to establish a form of the slicing principle without the tame restriction \cite[Theorem~3.10]{abe-esnault}. This in turn yields the existence of \'etale companions
as above.
\end{remark}

\begin{remark} \label{R:slicing a posteriori}
Note that the forms of the slicing principle obtained in \cite{abe-esnault} are \emph{a priori} stronger than Lemma~\ref{L:slicing}, in that there is no base extension necessary. On the other hand, with Theorem~\ref{T:deligne conj2} in hand, one can transfer \emph{any} version of the Lefschetz slicing principle from the \'etale case to the crystalline case; in particular, one can recover these stronger statements \emph{a posteriori} by working on the \'etale side, as demonstrated by Theorem~\ref{T:strong slicing} and Theorem~\ref{T:strong slicing2} below.
However, even in the \'etale case some care is required; for example, an incorrect version of the slicing principle is asserted in \cite[VII.7]{lafforgue}. See \cite[Theorem~8.1]{esnault-lefschetz} for further discussion of this point,
and the rest of that article for a broader discussion of Lefschetz slicing.
\end{remark}

The following result reproduces \cite[Theorem~2.5]{abe-esnault},
modulo a theorem of Poonen \cite[Theorem~1.1]{poonen} which can be used to produce curves as described passing through any finite set of closed points. Compare \cite[Proposition~C.1]{drinfeld-deligne}.
(See \cite[Lemma~5.4]{esnault-lefschetz} for an example showing that the tame hypothesis cannot be omitted.)

\begin{theorem} \label{T:strong slicing}
Let $U$ be an open dense affine subscheme of $X$.
Let $H_1,\dots,H_{n-1}$ be very ample divisors on $U$ which intersect transversely.
Let $\calE$ be a tame irreducible coefficient object on $X$.
Then the restriction of $\calE$ to $C := H_1 \cap \cdots \cap H_{n-1}$ is irreducible.
\end{theorem}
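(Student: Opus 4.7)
The strategy, as foreshadowed by Remark~\ref{R:slicing a posteriori}, is to use the existence of \'etale companions to reduce to the \'etale case, where the conclusion follows from a Lefschetz hyperplane theorem for the tame fundamental group.

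First, I would observe that $\calE$ is algebraic: $P(\calE_x, T)$ is algebraic for each $x \in X^\circ$, as may be verified by restricting to a curve through $x$ and invoking Theorem~\ref{T:companion dimension 1} together with Lemma~\ref{L:decompose to finite order}. By Theorem~\ref{T:Weil companion}(a), $\calE$ admits an \'etale companion $\calF$; by Theorem~\ref{T:chebotarev}(a) this $\calF$ is again irreducible, and by Corollary~\ref{C:tame compatibility} together with Lemma~\ref{L:extend tame}, $\calF$ is tame. Since $\calE|_C$ and $\calF|_C$ are themselves companions (being obtained by restriction), another application of Theorem~\ref{T:chebotarev}(a) reduces the problem to proving the theorem under the additional hypothesis that $\calE$ is an \'etale coefficient object.

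In the \'etale case, I would restrict $\calE$ to $U$ (preserving tameness and irreducibility by Lemma~\ref{L:restriction functor}) and, after the finite base extension of $k$ furnished by Remark~\ref{R:absolutely irreducible}, reduce to the situation of a geometrically irreducible tame lisse $\overline{\QQ}_\ell$-sheaf on $U$, i.e., an irreducible representation $\rho$ of $\pi_1^{\mathrm{tame}}(U_{\overline{k}})$. The base extension is harmless: if each geometrically irreducible summand of $\calE|_{X_n}$ restricts irreducibly to $C_n$, then since these summands are cyclically permuted by Frobenius, $\calE|_C$ itself will be irreducible by descent. At this point one invokes the Lefschetz hyperplane theorem for the tame fundamental group: for $U$ smooth affine of dimension $n$ and $C$ a transverse intersection of $n-1$ very ample divisors on $U$, the natural map $\pi_1^{\mathrm{tame}}(C_{\overline{k}}) \to \pi_1^{\mathrm{tame}}(U_{\overline{k}})$ is surjective; consequently $\rho$ has the same image when restricted to $\pi_1^{\mathrm{tame}}(C_{\overline{k}})$, and so remains irreducible.

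The main obstacle is securing the Lefschetz surjectivity for the tame fundamental group in precisely the stated generality: a smooth affine $U$ in arbitrary characteristic, with $C$ a transverse intersection of very ample divisors and no further genericity imposed. Such surjectivity statements are known in various forms in the literature on tame Lefschetz theorems and higher-dimensional class field theory (work of Drinfeld, Esnault-Kindler, and Kerz-Schmidt in particular); in the setup here, the appeal to Poonen's Bertini theorem flagged in the remark preceding the statement is what allows one to produce suitable $C$ through any prescribed finite set of closed points. Once this input is granted, the rest of the argument is routine bookkeeping with the companion machinery already built up in the paper.
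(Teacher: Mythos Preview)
Your approach is essentially the paper's: reduce to the \'etale case via companions (Theorem~\ref{T:Weil companion}, Theorem~\ref{T:chebotarev}(a), Corollary~\ref{C:tame compatibility}), then invoke a tame Lefschetz principle. The paper's write-up differs only in that it avoids your base-extension-then-descent detour. After reducing to $U = X$ and twisting to finite determinant (so that $\calE$ is a genuine \'etale sheaf), the paper observes that for any connected finite \'etale cover $W \to X$ tamely ramified at the boundary, $W \times_X C$ is again connected; this is the Bertini connectedness theorem of Fulton--Lazarsfeld \cite[Theorem~2.1(A)]{fulton-lazarsfeld}, applied as in \cite[Lemma~C.2]{drinfeld-deligne}. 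That statement is exactly the surjectivity of the \emph{arithmetic} tame fundamental group $\pi_1^{\mathrm{tame}}(C) \to \pi_1^{\mathrm{tame}}(X)$, so an irreducible tame representation of $\pi_1(X)$ restricts to an irreducible representation of $\pi_1(C)$ with no further work.

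Your route through geometric irreducibility is correct but carries an extra burden: to conclude that $\calE|_C$ is irreducible from the irreducibility of each $\calE_i|_{C_n}$, you implicitly need that the $\calE_i|_{C_n}$ remain pairwise non-isomorphic (so that any $W_C$-stable subspace is a sum of some of them and hence, by the cyclic Frobenius action, all of them). This does follow from the surjectivity of $\pi_1^{\mathrm{tame}}(C_{\overline{k}}) \to \pi_1^{\mathrm{tame}}(U_{\overline{k}})$ (pullback along a surjection is fully faithful), but you should say so. The paper's formulation via connectedness of covers sidesteps this bookkeeping entirely and pins down the input precisely, rather than gesturing at the tame-Lefschetz literature.
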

\begin{proof}
By Lemma~\ref{L:restriction functor}, $\calE|_U$ is irreducible; we may thus assume hereafter that $U=X$.
Using Lemma~\ref{L:decompose to finite order}, we may reduce to the case where $\calE$ has determinant of finite order;
by Corollary~\ref{C:uniformly algebraic2}, $\calE$ is algebraic. 
By Theorem~\ref{T:Weil companion} and Lemma~\ref{L:constituent companions}, $\calE$ admits an irreducible $\ell$-adic companion for any $\ell \neq p$; we may thus further reduce to the \'etale case. In this setting, the claim follows from the fact that for any connected finite \'etale cover $W$ of $X$ which is tamely ramified at the boundary, $W \times_{X} C$ is also connected;
this follows from the Bertini connectedness theorem \cite[Theorem~2.1(A)]{fulton-lazarsfeld}
as in \cite[Lemma~C.2]{drinfeld-deligne}.
\end{proof}

We now turn around and invoke a lemma from \cite{abe-esnault} that allows one to formally upgrade
Theorem~\ref{T:strong slicing}. 
\begin{lemma}[Abe--Esnault] \label{L:ae same mono}
Let $f: Y \to X$ be a morphism of smooth connected $k$-schemes.
Let $\calE$ be a coefficient object on $X$. Suppose that for each positive integer $n$,
for $\calF := (\calE \oplus \calE^\dual)^{\otimes n}$, the map
$H^0(X, \calF) \to H^0(Y, f^* \calF)$ is an isomorphism.
Then the morphism $\overline{G}(\calE) \to \overline{G}(f^* \calE)$ $($defined with respect to a consistent choice of base points$\,)$ is an isomorphism.
\end{lemma}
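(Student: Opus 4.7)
Let $G := \overline{G}(\calE)$ and $H := \overline{G}(f^*\calE)$, viewed as subgroups of $\GL(\omega_x(\calE))$ via the consistent base points, so that $H \subseteq G$ is a closed immersion induced by $\pi_1$-functoriality (this is automatic, as every object of $\langle f^*\calE \rangle_Y^{\mathrm{geom}}$ is a subquotient of $f^*$ applied to an object of $\langle \calE \rangle_X^{\mathrm{geom}}$). The assertion to prove is that this inclusion is an equality; equivalently, by the standard Tannakian reconstruction criterion, that the pullback functor $f^* \colon \langle \calE \rangle_X^{\mathrm{geom}} \to \langle f^*\calE \rangle_Y^{\mathrm{geom}}$ is fully faithful with essential image closed under taking sub-objects.

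Full faithfulness amounts to the natural map $H^0(X, \mathcal{H}) \to H^0(Y, f^*\mathcal{H})$ being an isomorphism for every $\mathcal{H} \in \langle \calE \rangle_X^{\mathrm{geom}}$, since Hom-spaces in the geometric Tannakian category are computed by $H^0$ of an internal tensor-hom. The hypothesis supplies this comparison for $\calF_n := (\calE \oplus \calE^\vee)^{\otimes n}$, and every $\mathcal{H} \in \langle \calE \rangle_X$ is a subquotient of some $\calF_n$: for sub-objects one concludes directly from the left-exactness of $H^0$ (the subspaces $H^0(X, \mathcal{K}) = H^0(X, \calF_n) \cap \omega_x(\mathcal{K})$ and $H^0(Y, f^*\mathcal{K}) = H^0(Y, f^*\calF_n) \cap \omega_x(\mathcal{K})$ coincide inside $\omega_x(\calF_n)$), while quotients are handled by invoking reductivity of the relevant monodromy groups (Proposition~\ref{P:global monodromy}), after a preliminary reduction to the case that $\calE$ is semisimple (e.g.\ via the weight filtration of Theorem~\ref{T:weight filtration}) to guarantee this reductivity.

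For closure under sub-objects, a Pl\"ucker-style argument suffices. Any rank-$d$ sub-object of $f^*\mathcal{H}$ corresponds to an $H$-invariant line $L \subset \omega_x(\wedge^d \mathcal{H})$, on which $H$ acts by some character. Then $L \otimes L^\vee$ is pointwise $H$-fixed inside $\omega_x(\wedge^d \mathcal{H} \otimes (\wedge^d \mathcal{H})^\vee)$, a tensor construction already in $\langle \calE \rangle_X$; by the full faithfulness just established, $L \otimes L^\vee$ is also $G$-fixed, so $G$ acts on $L$ by a character, forcing the sub-object of $f^*\mathcal{H}$ to be the pullback of a sub-object of $\mathcal{H}$.

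The principal obstacle is the quotient step in promoting the $H^0$-comparison from $\calF_n$ to arbitrary subquotients, where taking invariants can fail to be exact if $\overline{G}(\calE)^\circ$ is not reductive. The cleanest route is to first reduce to the semisimple case of $\calE$ by decomposing along the weight filtration of Theorem~\ref{T:weight filtration}, verify that the hypothesis is inherited by each pure constituent, and then apply Proposition~\ref{P:global monodromy} to obtain reductivity of $\overline{G}(\calE)^\circ$ and thereby exactness of $H^0$ on the relevant subcategory; once this technical reduction is in place, the two Tannakian criteria above combine to yield $H = G$.
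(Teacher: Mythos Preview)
Your Pl\"ucker step contains a genuine gap. You claim that the line $L \subset V := \omega_x(\wedge^d \mathcal{H})$ produces an $H$-fixed vector ``$L \otimes L^\vee$'' inside $V \otimes V^\vee$, but there is no natural $H$-equivariant inclusion $L^\vee \hookrightarrow V^\vee$: dualizing $L \hookrightarrow V$ gives a \emph{surjection} $V^\vee \twoheadrightarrow L^\vee$, and splitting it requires an $H$-stable complement to $L$ in $V$. You have no control over $H = \overline{G}(f^*\calE)$ (your reduction to semisimple $\calE$ makes $G$ reductive, not $H$), so this splitting is unavailable. The proposed ``reduction to semisimple $\calE$'' is itself problematic: the conclusion concerns $\overline{G}(\calE)$, not $\overline{G}(\calE^{\mathrm{ss}})$, and it is not clear that the $H^0$-hypothesis passes to weight-graded pieces.

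This is not a technicality that can be patched: full faithfulness of $f^*$ alone genuinely does \emph{not} force $H = G$. Take $G = \GL_2$ and $H = B$ a Borel. For any $G$-representation $W$, a $B$-fixed vector is a highest-weight vector of weight $0$, hence (by complete reducibility for $G$) spans a trivial $G$-subrepresentation; thus $W^B = W^G$ for every $W$, so restriction $\mathrm{Rep}(G) \to \mathrm{Rep}(B)$ is fully faithful, yet $B \neq G$. The paper makes exactly this point (``This is not enough to prove the claim'') and brings in the decisive arithmetic input you omit: every rank-$1$ object $\calG$ in the Tannakian category $[\calE]$ has \emph{finite order}. This is verified by promoting $\calG$ to a true coefficient object on some $X_n$ (Remark~\ref{R:absolutely irreducible}) and invoking Lemma~\ref{L:decompose to finite order}; note that in the Borel counterexample it fails, since powers of $\det$ have infinite order. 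With this in hand, a short group-theoretic lemma (cited from Abe--Esnault) finishes: if $L$ is the Chevalley line with $H = \mathrm{Stab}_G(L)$ and the character of $H$ on $L$ has finite order $n$, then $L^{\otimes n}$ is an $H$-fixed vector in $V^{\otimes n}$, hence $G$-fixed by the invariants comparison, and $(gv)^{\otimes n} = v^{\otimes n}$ forces $gv \in L$, so $G$ stabilizes $L$ --- contradiction.
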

\begin{proof}
This is essentially \cite[Proposition~1.9]{abe-esnault} modulo a slight change of hypotheses; we sketch the argument to show that nothing is affected. Let $[\calE]$ be the Tannakian category of geometric coefficient objects generated by $\calE$. By hypothesis, $f^*$ restricts to a fully faithful functor on $[\calE]$. This is not enough to prove the claim; however, a quick group-theoretic argument \cite[Lemma~1.7]{abe-esnault} shows that this does hold provided that every $\calG \in [\calE]$ of rank 1 is of finite order. By Remark~\ref{R:absolutely irreducible}, $\calG$ promotes to a true coefficient object on $X_n$ 
for some $n$; we may thus apply Lemma~\ref{L:decompose to finite order} to deduce the claim.
\end{proof}

\begin{cor} \label{C:same monodromy}
With notation as in Theorem~\ref{T:strong slicing}, the induced morphism $\overline{G}(\calE) \to \overline{G}(\calE|_C)$ $($defined with respect to a consistent choice of base points$)$ is an isomorphism.
\end{cor}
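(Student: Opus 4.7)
The plan is to apply Lemma~\ref{L:ae same mono} to the inclusion $f \colon C \hookrightarrow X$. Accordingly, we must verify that for each positive integer $n$, setting $\calF := (\calE \oplus \calE^\dual)^{\otimes n}$, the restriction map $H^0(X, \calF) \to H^0(C, \calF|_C)$ is an isomorphism.

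First, $\calF$ is tame by Remark~\ref{R:tame in codim 1}, since $\calE$ is tame and tameness is preserved under sums, tensors, duals, and subquotients. Next, because $\calE$ is irreducible as a coefficient object, its arithmetic monodromy group acts irreducibly and faithfully on $\omega_x(\calE)$, so admits no nontrivial normal unipotent subgroup and is therefore reductive; in characteristic zero this makes every object of the Tannakian category generated by $\calE$ semisimple. Hence $\calF$ decomposes as $\bigoplus_i \calG_i$ into irreducible (necessarily tame) coefficient objects. Theorem~\ref{T:strong slicing} applied to $\calE$ shows $\calE|_C$ is again irreducible, so the same reductive-group argument makes $\calF|_C$ semisimple; a second application of Theorem~\ref{T:strong slicing} to each $\calG_i$ shows that $\calF|_C = \bigoplus_i \calG_i|_C$ is itself an irreducible decomposition. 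Consequently, $\dim H^0(X, \calF)$ and $\dim H^0(C, \calF|_C)$ count, respectively, the indices $i$ for which $\calG_i \cong \mathbf{1}_X$ or $\calG_i|_C \cong \mathbf{1}_C$; the restriction map is visibly an injection between these sets, so only the matching of counts remains.

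To match the counts, note that for $\calG_i$ of rank at least $2$, the irreducible restriction $\calG_i|_C$ has rank at least $2$ and so cannot be trivial. For $\calG_i$ of rank $1$, Remark~\ref{R:absolutely irreducible} combined with Lemma~\ref{L:decompose to finite order} (exactly as in the proof sketch of Lemma~\ref{L:ae same mono}) shows that the underlying geometric character of $\calG_i$ is of finite order; a nontrivial such character corresponds to a connected tame Galois \'etale cover of $X_{\overline{k}}$ whose pullback to $C_{\overline{k}}$ remains connected by the Bertini connectedness theorem invoked in the proof of Theorem~\ref{T:strong slicing}, so that $\calG_i|_C$ is again geometrically nontrivial. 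If instead $\calG_i$ is geometrically trivial, it is the pullback of a constant character from $\Spec(k)$, and the condition $\calG_i|_C \cong \mathbf{1}_C$ then forces that constant character to be trivial, giving $\calG_i \cong \mathbf{1}_X$. The two counts therefore coincide, the hypothesis of Lemma~\ref{L:ae same mono} is satisfied, and the conclusion follows. The principal obstacle is this rank-$1$ analysis, which marries the finite-order structure of Tannakian characters with the Bertini connectedness ingredient underlying Theorem~\ref{T:strong slicing}; the remainder of the argument is essentially formal.
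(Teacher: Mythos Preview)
Your argument is correct and follows the same strategy as the paper: verify the hypothesis of Lemma~\ref{L:ae same mono} by applying Theorem~\ref{T:strong slicing} to the constituents of $(\calE\oplus\calE^\dual)^{\otimes n}$. The paper compresses the passage from arithmetic to geometric constituents into a single citation of Lemma~\ref{L:decompose to finite order2} (the point being that over each $X_n$ the constituents again have finite-order determinant, so Theorem~\ref{T:strong slicing} applies over $k_n$ and one obtains preservation of \emph{geometric} irreducibility), whereas you unpack this step by separating the rank-$\geq 2$ and rank-$1$ cases and invoking Bertini connectedness over $\overline{k}$ directly for the latter. Your route is more explicit and self-contained; the paper's is terser but leans harder on the reader to reconstruct the link between Lemma~\ref{L:decompose to finite order2} and geometric irreducibility.

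One cosmetic point worth tightening: your claim that $\dim H^0(X,\calF)$ counts the indices $i$ with $\calG_i\cong\mathbf{1}_X$ should instead read ``$\calG_i$ geometrically trivial (i.e., constant)'', since the $H^0$ in Lemma~\ref{L:ae same mono} carries no Frobenius invariance. This does not damage the argument --- an arithmetically irreducible constant object is necessarily of rank~$1$, so your case split already handles exactly the right condition --- but the final sentence about ``forcing the constant character to be trivial'' is then superfluous: once $\calG_i$ is constant, so is $\calG_i|_C$, and both contribute a one-dimensional $H^0$ regardless of the arithmetic twist. You should also note (as the paper does implicitly) that the Bertini connectedness step over $\overline{k}$ presumes $C_{\overline{k}}$ is connected; this follows from the same connectedness input applied to the trivial cover once $X$ is taken geometrically irreducible.
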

\begin{proof}
By Theorem~\ref{T:strong slicing}, for every constituent of
$(\calE \oplus \calE^{\dual})^{\otimes n}$ as a coefficient object, the pullback to $C$ is again an
irreducible coefficient object.
By Lemma~\ref{L:decompose to finite order2}, the same is true for geometric coefficient objects.
We may thus apply Lemma~\ref{L:ae same mono} to conclude.
\end{proof}

On a related note, we mention the following result.
\begin{cor} \label{C:finiteness}
For any positive integer $r$, any category of coefficient objects only contains finitely many isomorphism classes of irreducible tame coefficient objects of rank $r$ on $X$ up to twisting by constant objects of rank $1$.
\end{cor}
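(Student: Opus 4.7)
The plan is to reduce from the crystalline case to the \'etale case via companions, then from general $X$ to a Lefschetz curve in $X$ where Corollary~\ref{C:finiteness curve} applies.

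To reduce crystalline to \'etale, let $\calE$ be a tame irreducible crystalline coefficient object of rank $r$; after twisting by a constant (Lemma~\ref{L:decompose to finite order}) I may assume $\det \calE$ has finite order. By Theorem~\ref{T:Weil companion} and Corollary~\ref{C:tame compatibility}, $\calE$ admits a tame irreducible \'etale companion $\calF$, and $\det \calF$ has finite order by Corollary~\ref{C:companion det}. Two crystalline objects with isomorphic \'etale companions are themselves isomorphic by Theorem~\ref{T:chebotarev}(b); and if instead $\calF' \cong \calF \otimes \calL$ for another such object $\calE'$ and some constant rank-$1$ \'etale $\calL$, then comparing determinants forces $\calL$ to be of finite order, hence algebraic and equipped with a crystalline companion $\calM$ by Corollary~\ref{C:finite order companion}; Theorem~\ref{T:chebotarev}(b) then yields $\calE \cong \calE' \otimes \calM$. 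Hence $[\calE] \mapsto [\calF]$ is injective on iso classes modulo constant twists, reducing the proof to the \'etale case.

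For the \'etale case, I would use Lemma~\ref{L:restriction functor} to restrict to an open dense affine subscheme of $X$ (restriction is fully faithful and preserves irreducibility, while tameness is preserved under pullback by definition). Then fix once and for all a Lefschetz complete-intersection curve $C = H_1 \cap \cdots \cap H_{n-1} \subset X$ of the type considered in Theorem~\ref{T:strong slicing}; since a curve is geometrically irreducible by our conventions, constant rank-$1$ objects on $C$ correspond bijectively to those on $X$ via pullback. By Theorem~\ref{T:strong slicing}, every tame irreducible \'etale object $\calE$ of rank $r$ on $X$---which I may take to have finite-order determinant, and hence to be a lisse \'etale $\overline{\QQ}_\ell$-sheaf---restricts to a tame irreducible lisse \'etale sheaf $\calE|_C$ of rank $r$ on $C$. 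By Corollary~\ref{C:finiteness curve}, such objects on $C$ form only finitely many iso classes modulo constant twist.

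To conclude, the restriction map $\calE \mapsto \calE|_C$ is injective on iso classes of lisse \'etale sheaves, because $\pi_1^{\mathrm{et}}(C,\overline{x}) \twoheadrightarrow \pi_1^{\mathrm{et}}(X,\overline{x})$: this surjectivity is precisely the Bertini-connectedness input (any finite \'etale cover of $X$ has connected pullback to $C$, via Fulton--Lazarsfeld) underlying the proof of Theorem~\ref{T:strong slicing}. Combining this injectivity with the finiteness on $C$ and the correspondence of constant twists yields finitely many iso classes on $X$ modulo constant twist, proving the corollary. The main obstacle is this final injectivity step: the tameness hypothesis is essential, for without it Theorem~\ref{T:strong slicing} is not available and one cannot find a single Lefschetz curve that works uniformly for all $\calE$ of rank $r$.
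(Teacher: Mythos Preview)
Your proposal is correct and follows the same route as the paper: reduce to the \'etale case via Theorem~\ref{T:Weil companion}, pass to an affine open, restrict to a single Lefschetz curve $C$, and invoke Corollary~\ref{C:finiteness curve}. The paper phrases the key full-faithfulness step via Corollary~\ref{C:same monodromy} (isomorphism of geometric monodromy groups) together with Poonen's Bertini theorem \cite{poonen} for the existence of $C$, whereas you argue directly via surjectivity of $\pi_1^{\mathrm{et}}(C,\overline{x}) \to \pi_1^{\mathrm{et}}(X,\overline{x})$.

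One caveat worth tightening: the Bertini-connectedness input underlying Theorem~\ref{T:strong slicing} (as in \cite[Lemma~C.2]{drinfeld-deligne}) only shows that connected finite \'etale covers of $X$ which are \emph{tame along the boundary} remain connected over the fixed curve $C$; the full $\pi_1^{\mathrm{et}}$ need not surject, since wild Artin--Schreier-type covers can split over a fixed $C$ (cf.\ \cite[Lemma~5.4]{esnault-lefschetz}). Because you are working exclusively with tame sheaves, surjectivity on the tame quotient of $\pi_1$ is all you need and your injectivity claim goes through; but the assertion about the full \'etale fundamental group should be weakened accordingly.
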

\begin{proof}
By Theorem~\ref{T:Weil companion}, it suffices to check the case of a category of \'etale coefficient objects.
By Lemma~\ref{L:restriction functor}, we may assume that $X$ is affine.
By Corollary~\ref{C:same monodromy} and the aforementioned theorem of Poonen, we can find a curve $C$ in $X$
such that restriction of coefficient objects from $X$ to $C$ is fully faithful;
by Corollary~\ref{C:finiteness curve2}, we deduce the claim.
\end{proof}

Without the tameness hypothesis, one has the following result which reproduces \cite[Theorem~3.10]{abe-esnault}.

\begin{theorem} \label{T:strong slicing2}
Let $\calE$ be an irreducible coefficient object on $X$.
There is a dense open subscheme $U$ of $X$ such that for any finite subset $S \subseteq U^\circ$,
there exist a curve $C$ and a morphism $f: C \to X$ over $k$ such that $f$ admits a section on $S$
and $f^* \calE$ is irreducible.
\end{theorem}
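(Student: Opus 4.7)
The plan is to reduce the statement to the \'etale case via the existence of \'etale companions (Theorem~\ref{T:Weil companion}), then leverage the tame slicing result (Theorem~\ref{T:strong slicing}) combined with Poonen's Bertini theorem after passing to a cover that makes $\calE$ docile.

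First, by Theorem~\ref{T:Weil companion}(a), $\calE$ admits an $\ell$-adic companion $\calE^{\mathrm{et}}$ for some prime $\ell\neq p$, which is again irreducible by Theorem~\ref{T:chebotarev}(a). For any morphism $f\colon C\to X$ over $k$, the pullback $f^*\calE^{\mathrm{et}}$ is a companion of $f^*\calE$ on $C$, so Theorem~\ref{T:chebotarev}(a) applied on $C$ shows that $f^*\calE$ is irreducible exactly when $f^*\calE^{\mathrm{et}}$ is. Thus one reduces to the case where $\calE$ is an irreducible \'etale coefficient object.

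Next, apply Remark~\ref{R:tamely} to obtain a connected finite \'etale Galois cover $h\colon X'\to X$ such that $h^*\calE$ is docile on $X'$. The pullback $h^*\calE$ is semisimple by Proposition~\ref{P:global monodromy}, and its irreducible constituents are all docile by Remark~\ref{R:tame in codim 1}. Now one applies Theorem~\ref{T:strong slicing} combined with Poonen's Bertini theorem (\cite[Theorem~1.1]{poonen}) on $X'$: for a suitable dense open affine $U'\subseteq X'$ and any finite $S'\subseteq U'^\circ$, one obtains a smooth curve $C'\subset U'$ through $S'$ such that, by Corollary~\ref{C:same monodromy} applied to each constituent, the geometric monodromy of each irreducible docile summand of $h^*\calE$ is preserved on $C'$; in particular, each such summand restricts irreducibly. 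Taking $C := C'$ and $f := h|_{C'}$, we obtain $f^*\calE = h^*\calE|_{C'}$ with the same irreducible decomposition as $h^*\calE$; provided the cover $h$ is chosen minimally (so that $h^*\calE$ remains irreducible on $X'$), the pullback $f^*\calE$ is irreducible on $C'$. The open set $U\subseteq X$ will then be defined as an open dense subscheme (depending on $\calE$ and $h$) chosen so that preimages $x'_i\in h^{-1}(x_i)$ can be selected for each $x_i\in S$, and the assignment $x_i\mapsto x'_i$ yields the required section $s\colon S\to C'$ of $f$.

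The principal obstacle is ensuring that the section of $f$ over $S$ exists as a $k$-morphism, which requires $\kappa(x'_i)=\kappa(x_i)$ for each $i$. For the Galois cover $h$, a closed point $x\in X$ admits a $\kappa(x)$-rational preimage in $X'$ precisely when $\mathrm{Frob}_x$ lies in the identity conjugacy class of $\Gal(X'/X)$; by Chebotarev density this is generally not a dense open condition, so a direct definition of $U$ as the locus of splittings fails. The resolution, following Abe--Esnault, is either to enlarge $h$ by a further \'etale cover tailored to split the specific points of $S$ (which preserves docility under pullback), or to construct $C$ not as an embedded curve in $X'$ but as an independent curve over $k$ mapping to $X$ whose residue fields at finitely many prescribed points can be controlled via a moduli-of-curves argument. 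The geometric fine-tuning needed to combine this section requirement with the preservation of irreducibility under pullback is the main technical content of \cite[Theorem~3.10]{abe-esnault}, and is what prevents us from recovering an analogue of Corollary~\ref{C:same monodromy} in the absence of a tameness hypothesis.
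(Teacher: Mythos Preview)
Your reduction to the \'etale case via companions is correct and matches the paper (though you should first twist by Lemma~\ref{L:decompose to finite order} to make $\det(\calE)$ of finite order, so that Corollary~\ref{C:uniformly algebraic2} guarantees algebraicity before invoking Theorem~\ref{T:Weil companion}(a)). After that point, however, the paper does \emph{not} go through a docile cover and Theorem~\ref{T:strong slicing}; it simply cites \cite[Theorem~8.1]{esnault-lefschetz}, whose proof is a direct application of Hilbert irreducibility in the form of \cite[Theorem~2.15]{drinfeld-deligne}. That argument produces the curve $C\to X$ and the open $U$ directly on $X$, with no auxiliary cover and hence no section obstruction.

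Your alternative route has two genuine gaps that you do not close. First, the clause ``provided the cover $h$ is chosen minimally (so that $h^*\calE$ remains irreducible on $X'$)'' is unjustified: there is no reason the smallest cover rendering $\calE$ docile should preserve irreducibility, and in general it will not. If $h^*\calE$ decomposes on $X'$, then $f^*\calE$ certainly decomposes on $C'$, and the argument collapses. Second, as you yourself note, the section requirement $\kappa(x'_i)=\kappa(x_i)$ is not an open condition on $x_i$, so no choice of $U$ independent of $S$ can guarantee split preimages in a fixed cover $h$. Your proposed fixes (enlarging $h$ depending on $S$, or a ``moduli-of-curves argument'') are not carried out, and the first one is circular since $U$ must be chosen before $S$. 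In effect your final paragraph defers the entire content of the theorem to \cite[Theorem~3.10]{abe-esnault}, which is not a proof. The Hilbert irreducibility approach of the paper avoids both difficulties at once.
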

\begin{proof}
We may reduce to the \'etale case as in the proof of Theorem~\ref{T:strong slicing};
then \cite[Theorem~8.1]{esnault-lefschetz} gives the desired result.
(This amounts to applying Hilbert irreducibility in the form of \cite[Theorem~2.15]{drinfeld-deligne}.)
\end{proof}

\subsection{On a conjecture of de Jong}

The following conjecture is due to de Jong, as reported in 
\cite{esnault-shiho, shiho-fg} where some special cases are established.
\begin{conj}[de Jong]
Let $X$ be a smooth, proper, geometrically connected variety over a $($not necessarily finite$)$ perfect field $k$ of characteristic $p$ with trivial geometric \'etale fundamental group.
Then every convergent isocrystal on $X$ is constant.
\end{conj}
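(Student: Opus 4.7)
The plan is to reduce to the case where $k$ is finite by spreading out and specialization, to promote the convergent isocrystal $\calE$ to a convergent $F$-isocrystal, and then to apply the slope filtration together with the unit-root/representation dictionary to handle the graded pieces. Since $X$ is proper, $\FIsoc^\dagger(X) = \FIsoc(X)$, so the machinery developed in this paper is directly applicable once a Frobenius structure is available.

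I would first reduce to the case $k$ finite by spreading $(X, \calE)$ out over a smooth $\mathbb{F}_p$-algebra $R$ (the pair is defined over a finitely generated subfield of $k$) and specializing at a suitably general closed point of $\Spec(R)$. One must verify that the triviality of $\pi_1^{\mathrm{et}}(X_{\overline{k}})$ descends to a closed fiber; the prime-to-$p$ part of the geometric fundamental group is constant in proper smooth families by Grothendieck's specialization theorem, and one would need to argue that no additional wildly ramified covers appear on the special fiber. Assuming $k$ finite, I would equip $\calE$ with a Frobenius structure via a Frobenius-iteration argument: the pullbacks $(F^n)^* \calE$ for $n \ge 0$ form a family of convergent isocrystals on $X$ of fixed rank, and appealing to a finiteness statement (a crystalline analogue of Corollary~\ref{C:finiteness curve}) one expects $(F^n)^* \calE \cong \calE$ for some $n$, which after replacing $k$ by $k_n$ promotes $\calE$ to a convergent $F$-isocrystal.

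Once $\calE \in \FIsoc(X) \otimes \overline{\QQ}_p$ is an $F$-isocrystal, after a further finite extension of $k$ to arrange that $x \mapsto N_x(\calE)$ is constant on $X$ (which I would try to enforce using Proposition~\ref{P:specialization} and working stratum by stratum), the slope filtration of Proposition~\ref{P:slope filtration}(b) expresses $\calE$ as a successive extension of isoclinic pieces. Each isoclinic piece becomes unit-root after twisting by a suitable constant rank-$1$ object of matching slope (defined over a further extension of $k$), and by Proposition~\ref{P:unit-root}(a) such a unit-root convergent $F$-isocrystal corresponds to a continuous $\overline{\QQ}_p$-representation of $\pi_1^{\mathrm{et}}(X, \overline{x})$; the assumption $\pi_1^{\mathrm{et}}(X_{\overline{k}}) = 1$ then forces this representation to factor through $G_k$, so each graded piece of the slope filtration is constant.

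The main obstacle is splitting the slope filtration: one needs $\Ext^1_{\FIsoc(X)}(\calE_2, \calE_1) = 0$ for any two constant convergent $F$-isocrystals $\calE_1, \calE_2$ of distinct slopes, which amounts to a vanishing statement for $H^1_{\mathrm{rig}}(X/K)$ with appropriate constant coefficients. This is not a direct consequence of $\pi_1^{\mathrm{et}}(X_{\overline{k}}) = 1$, since rigid $H^1$ can contain contributions (from non-unit-root parts of crystalline cohomology, for instance) that are invisible to the \'etale fundamental group. Bridging this gap, along with the two preceding finiteness inputs (preservation of triviality of $\pi_1^{\mathrm{et}}$ under specialization and the upgrade to an $F$-structure via Frobenius iteration), is presumably why the conjecture remains open in general, with only special cases (such as those treated by Esnault-Shiho and Shiho) currently accessible via Kodaira-type vanishing theorems in $p$-adic cohomology.
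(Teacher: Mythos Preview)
The statement is a \emph{conjecture}, not a theorem: the paper does not prove it, and you correctly acknowledge this in your final paragraph. There is thus no proof in the paper to compare against.

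What the paper does prove is the much weaker Proposition~\ref{P:de jong conj}: for $k$ finite and for objects of $\FIsoc^\dagger(X) \otimes \overline{\QQ}_p$ (i.e., isocrystals \emph{already equipped} with a Frobenius structure), constancy holds after passing to the 2-colimit over finite extensions of $k$. The argument there is quite different from your sketch: rather than slope filtrations and the unit-root dictionary, it invokes \'etale companions directly. An irreducible $\calE$ with determinant of finite order has an \'etale companion by Theorem~\ref{T:Weil companion}; this is a lisse Weil sheaf on a variety with trivial geometric $\pi_1$, hence constant, and then Theorem~\ref{T:chebotarev} forces $\calE$ to be constant as well. Extensions are handled by noting that pullback from $\Spec(k)$ is fully faithful and preserves $\Ext$ groups.

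Your proposed attack on the full conjecture correctly isolates the obstructions, but two of your reduction steps are themselves open problems, not technicalities. First, triviality of the geometric fundamental group is not known to specialize in proper smooth families (the $p$-part is the issue, as you note). Second, and more seriously, there is no known finiteness result for convergent isocrystals \emph{without} Frobenius structure that would yield $(F^n)^* \calE \cong \calE$; Corollary~\ref{C:finiteness curve} is about $F$-isocrystals, and the category of bare convergent isocrystals is vastly larger (already over a point it contains all vector spaces, with no finiteness whatsoever). Promoting an isocrystal to an $F$-isocrystal is essentially the heart of the conjecture, not a preliminary reduction.
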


Using \'etale companions, one obtains the following result in the direction of this conjecture.
(Compare \cite[Remark~4.5]{abe-esnault}.)
\begin{prop} \label{P:de jong conj}
Suppose that $(k$ is finite and$\,)$
$X$ is proper over $k$ and has trivial geometric \'etale fundamental group.
Then the functor 
\[
2\mbox{-}\!\!\varinjlim_{n \to \infty} \FIsoc^\dagger(\Spec k_n) \otimes \overline{\QQ}_p \to 2\mbox{-}\!\!\varinjlim_{n \to \infty} \FIsoc^\dagger(X_n) \otimes \overline{\QQ}_p
\]
is an equivalence of categories.
\end{prop}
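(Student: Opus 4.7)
The plan is to verify that the pullback functor $\pi_n^*\colon \FIsoc^\dagger(\Spec k_n) \otimes \overline{\QQ}_p \to \FIsoc^\dagger(X_n) \otimes \overline{\QQ}_p$ is fully faithful for each $n$ and essentially surjective once we pass to the $2$-colimit. For full faithfulness I reduce as usual to showing that for any $\calG$ on $\Spec k_n$, the adjunction unit $\calG \to H^0(X_n, \pi_n^* \calG)$ is an isomorphism of $F$-isocrystals. This follows from the projection formula for the smooth proper morphism $\pi_n$ together with the identity $H^0_{\rig}(X_n/K_n) = K_n$, which holds because $X_n$ is smooth, proper, and geometrically connected over $k_n$.

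For essential surjectivity I first treat the case of an irreducible $\calE$. After twisting $\calE$ by a constant rank-$1$ object via Lemma~\ref{L:decompose to finite order}, I may assume $\det(\calE)$ has finite order, hence that $\calE$ is algebraic. Theorem~\ref{T:Weil companion} then produces an étale companion $\calF$ that is an irreducible lisse étale $\overline{\QQ}_\ell$-sheaf on $X_n$. The hypothesis $\pi_1^{\mathrm{et}}(X_{\overline{k}}, \overline{x}) = 1$ and the standard homotopy exact sequence force $\pi_1^{\mathrm{et}}(X_n, \overline{x}) \cong G_{k_n}$, so $\calF = \pi_n^* \calF_0$ for a lisse étale sheaf $\calF_0$ on $\Spec k_n$. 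Applying Theorem~\ref{T:one-dim case} on the zero-dimensional base yields a crystalline companion $\calG_0$ of $\calF_0$; the pullback $\pi_n^* \calG_0$ is then a crystalline companion of $\calE$, and both parts of Theorem~\ref{T:chebotarev} give an isomorphism $\calE \cong \pi_n^* \calG_0$.

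For general $\calE$ I filter by irreducible constituents (each pulled back by the previous paragraph) and then verify that the essential image of $\pi_n^*$ is closed under extensions. The key computation identifies
\[
\Ext^1_{\FIsoc^\dagger(X_n) \otimes \overline{\QQ}_p}(\pi_n^* \calF_2, \pi_n^* \calF_1)
\]
in a short exact sequence whose outer terms are Frobenius coinvariants of $H^0(X_n, \pi_n^*(\calF_2^\vee \otimes \calF_1))$ and Frobenius invariants of $H^1(X_n, \pi_n^*(\calF_2^\vee \otimes \calF_1))$. The projection formula rewrites these as $H^i_{\rig}(X_n/K_n) \otimes_{K_n} (\calF_2^\vee \otimes \calF_1)$, and $H^1_{\rig}(X_n/K_n) = 0$ because $H^1_{\mathrm{et}}(X_{\overline{k}}, \QQ_\ell) = \Hom(\pi_1^{\mathrm{et}}(X_{\overline{k}}, \overline{x})^{\mathrm{ab}}, \QQ_\ell) = 0$, and the $\ell$-adic comparison of Betti numbers for smooth proper varieties over finite fields transfers the vanishing to rigid cohomology. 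Hence every extension between pullbacks is itself a pullback on $\Spec k_n$, and the devissage closes.

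The main obstacle is the extension-closure step: one must connect $\Ext^1$ in the category of overconvergent $F$-isocrystals to rigid cohomology with its Frobenius structure, and then import the topological input (triviality of $\pi_1$) on the $\ell$-adic side via comparison with $\ell$-adic Betti numbers. By contrast, the irreducible case is a rather clean synthesis of Theorem~\ref{T:Weil companion}, the triviality of $\pi_1$, and the uniqueness of companions from Theorem~\ref{T:chebotarev}.
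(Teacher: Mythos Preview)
Your argument is correct and follows essentially the same route as the paper. The irreducible case is identical in spirit: twist to finite-order determinant, pass to an \'etale companion via Theorem~\ref{T:Weil companion}, observe that the hypothesis $\pi_1^{\mathrm{et}}(X_{\overline{k}})=1$ forces this companion to be constant, and then invoke Theorem~\ref{T:chebotarev} to conclude that $\calE$ itself is constant. For the extension step the paper simply asserts (after arranging a $k$-rational point) that the pullback functor ``preserves extension groups'' and leaves it at that; your computation via the Hochschild--Serre-type sequence, the projection formula, the vanishing $H^1_{\rig}(X_n/K_n)=0$, and the comparison of $\ell$-adic and rigid Betti numbers for smooth proper varieties is an explicit and valid justification of precisely that assertion. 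So the difference is one of detail rather than strategy.

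Two cosmetic remarks. First, your appeal to Theorem~\ref{T:one-dim case} ``on the zero-dimensional base'' is a slight miscitation: that theorem is stated for curves, whereas companions over $\Spec k_n$ exist trivially (a coefficient object there is just a vector space with an automorphism, so one matches characteristic polynomials directly). Second, in the full-faithfulness step you need $X_n$ geometrically connected to get $H^0_{\rig}(X_n/K_n)=K_n$; the paper arranges this by first enlarging $k$ and reducing to $X$ irreducible, and you should do likewise (or note that the $2$-colimit absorbs this).
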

\begin{proof}
We are free to enlarge $k$, so we may assume that $X$ contains a $k$-rational point; we may also assume that $X$ is irreducible.
Since $X$ contains a $k$-rational point, the functor $\FIsoc^\dagger(\Spec k) \otimes \overline{\QQ}_p \to \FIsoc^\dagger(X) \otimes \overline{\QQ}_p$ is fully faithful; moreover, it preserves extension groups.
Consequently, it remains to check that the essential image contains an arbitrary irreducible $\calE \in \FIsoc^\dagger(X) \otimes \overline{\QQ}_p$, possibly after enlarging $k$.
At this point, there is no harm in twisting by a constant rank-1 object; by Lemma~\ref{L:decompose to finite order},
we may thus reduce to the case where $\det(\calE)$ is of finite order.
By Theorem~\ref{T:Weil companion}, $\calE$ arises as a crystalline companion
of some lisse Weil $\overline{\QQ}_{\ell}$-sheaf. By hypothesis, the latter
must arise by pullback from $k$, as then must $\calE$ by Theorem~\ref{T:chebotarev}(b).
\end{proof}



\begin{thebibliography}{D'Ad20++}

\bibitem[Abe18a]{abe-companion}
T. Abe, \textit{Langlands correspondence for isocrystals and existence of
crystalline companion for curves}, J. Amer. Math. Soc. \textbf{31} (2018), 921--1057.

\bibitem[Abe18b]{abe-crelle}
\bysame, \textit{Langlands program for $p$-adic coefficients
and the petits camarades conjecture},  J. reine angew. Math. \textbf{734} (2018), 59--69.

\bibitem[AE19]{abe-esnault}
T. Abe and H. Esnault, \textit{A Lefschetz theorem for overconvergent isocrystals with Frobenius
structure}, Ann. Sci. Éc. Norm. Supér. \textbf{52} (2019), 1243--1264. 

\bibitem[AM15]{abe-marmora}
T. Abe and A. Marmora, \textit{Product formula for $p$-adic epsilon factors},
J. Inst. Math. Jussieu \textbf{14} (2015), 275--377.

\bibitem[Art73]{artin-sga4}
M. Artin, \textit{Comparaison avec la cohomologie classique: cas d'un schema lisse},
in: \textit{Th\'eorie des Topos et Cohomologie \'Etale des Schemas $($SGA 4 III$\,)$}, Lecture Notes in Math. 305, Springer-Verlag, Berlin, 1973, expos\'e XI. 

\bibitem[Cad20]{cadoret}
A. Cadoret, \textit{La conjecture des compagnons d'apr\`es Deligne, Drinfeld, L. Lafforgue, T. Abe., \dots},
Expos\'e Bourbaki 1155, Ast\'erisque \textbf{422} (2020), 173--223.

\bibitem[Car06]{caro-curve}
D. Caro, \textit{Fonctions $L$ associ\'ees aux $\mathscr{D}$-modules arithm\'etiques. Cas des courbes}, 
Compos. Math. \textbf{142} (2006), 169--206.

\bibitem[Chi04]{chin2}
C. Chin, \textit{Independence of $\ell$ of monodromy groups}, J. Amer. Math. Soc. \textbf{17} (2004), 723--747.

\bibitem[Cre92]{crew-mono}
R. Crew, \textit{$F$-isocrystals and their monodromy groups},
Ann. Scient. \'Ec. Norm. Sup. \textbf{25} (1992), 429--464.

\bibitem[D'Ad20]{daddezio}
M. D'Addezio, \textit{The monodromy groups of lisse sheaves and overconvergent $F$-isocrystals}, Selecta Math. \textbf{26} (2020), paper no.~45.

\bibitem[deJ]{dejong-alterations}
A.~J. de Jong, \textit{Smoothness, semi-stability, and alterations},
Publ. Math. Inst. Hautes Études Sci. \textbf{83} (1996), 51--93.

\bibitem[Del81]{deligne-weil2}
P. Deligne, \textit{La conjecture de Weil, II}, Publ. Math. Inst. Hautes Études Sci. \textbf{52} (1981), 313--428.

\bibitem[Del12]{deligne-finite}
\bysame, \textit{Finitude de l'extension de $\mathbb{Q}$ engendr\'ee par des traces 
de Frobenius, en caract\'eristique finie}, Moscow Math. J. \textbf{12} (2012),
497--514.

\bibitem[Del15]{deligne-comptage}
\bysame, \textit{Comptage de faisceaux $\ell$-adiques}, Ast\'erisque \textbf{369} (2015), 285--312.

\bibitem[DF13]{deligne-flicker}
P. Deligne and Y.~Z. Flicker, \textit{Counting local systems with principal unipotent local monodromy},
Ann. Math. \textbf{178} (2013), 921--982.

\bibitem[Dri78]{drinfeld}
V.~G. Drinfeld, \textit{Langlands’ Conjecture for $\GL(2)$ over functional Fields}, in: 
\textit{Proceedings of the International Congress of Mathematicians}, Helsinki, 1978 pp.~565--574.

\bibitem[Dri81]{drinfeld-number}
\bysame, \textit{The number of two-dimensional irreducible representations of the fundamental group of a curve over a finite field} (in Russian), Funktsional. Anal. i Prilozhen. \textbf{15} (1981), 75--76. English translation: Functional Anal. Appl. \textbf{15} (1981), no.~4, 294--295 (1982).

\bibitem[Dri12]{drinfeld-deligne}
\bysame, \textit{On a conjecture of Deligne}, Moscow Math. J. \textbf{12} (2012), no.~3,
515--542.

\bibitem[DK17]{drinfeld-kedlaya}
V.~G.~Drinfeld and K.~S.~Kedlaya, \textit{Slopes of indecomposable $F$-isocrystals},
Pure Appl. Math. Quart. \textbf{13} (2017), 131--192.

\bibitem[Esn17]{esnault-lefschetz}
H. Esnault, \textit{Survey on some aspects of Lefschetz theorems in algebraic geometry},
Rev. Math. Complutense \textbf{30} (2017), 217--232.

\bibitem[EK11]{esnault-kerz-notes}
H. Esnault and M. Kerz, \textit{Notes on Deligne's letter to Drinfeld dated March 5, 2007},
preprint (2011). Available from \url{http://userpage.fu-berlin.de/esnault/preprints/helene/103-110617.pdf}.

\bibitem[EK12]{esnault-kerz}
\bysame, \textit{A finiteness theorem for Galois representations of function fields over finite fields
(after Deligne)}, Acta Math. Vietnam. \textbf{37} (2012), 531--562.

\bibitem[ES18]{esnault-shiho}
H. Esnault and A. Shiho, \textit{Convergent isocrystals on simply connected
varieties}, Ann. Inst. Fourier \textbf{68} (2018), 2109--2148.

\bibitem[Fli15]{flicker}
Y.~Z.~Flicker, \textit{Counting rank two local systems with at most one, unipotent, monodromy}, Amer. J. Math. \textbf{137} (2015), 739--763.

\bibitem[Fli17]{flicker2}
\bysame, \textit{Counting local systems via the trace formula}, J. reine angew. Math. \textbf{731} (2017),
139--157.

\bibitem[FL81]{fulton-lazarsfeld}
W. Fulton and R. Lazarsfeld, \textit{Connectivity and its applications in algebraic geometry},
in: \textit{Algebraic Geometry} (Chicago, Ill., 1980), Lecture Notes in Math. 862, Springer, Berlin, 1981,
pp.~26--92.

\bibitem[Gro77]{grothendieck-gos}
A. Grothendieck, \textit{Formule d'Euler-Poincar\'e en cohomologie \'etale}, in: \textit{Cohomologie $l$-adique et Fonctions $L$ (SGA 5)}, Lecture Notes in Math. 589, Springer, Berlin, 1977, expos\'e X.

\bibitem[Har74]{harder}
G. Harder, \textit{Chevalley groups over function fields and automorphic forms},
Annals of Math. \textbf{100} (1974), 249--306.

\bibitem[Kat73]{sga7}
N.~M.~Katz, \textit{Une formule de congruence pour la fonction $\zeta$}, in: \textit{Groupes de Monodromie en G\'eometrie Alg\'ebrique} (SGA 7),
Lecture Notes in Math. 340, Springer-Verlag, Berlin, 1973, expos\'e XXII.

\bibitem[Kat86]{katz-local}
\bysame, \textit{Local-to-global extensions of representations of fundamental groups},
Ann. Inst. Fourier \textbf{36} (1986), 69--106.

\bibitem[KS99]{katz-sarnak}
N.~M.~Katz and P. Sarnak, \textit{Random Matrices, Frobenius Eigenvalues, and Monodromy},
Amer. Math. Soc., Providence, 1999.

\bibitem[Ked06a]{kedlaya-finiteness}
K.~S.~Kedlaya,
\textit{Finiteness of rigid cohomology with coefficients},
Duke Math. J. \textbf{134} (2006), 15--97.

\bibitem[Ked06b]{kedlaya-weil2}
\bysame, \textit{Fourier transforms and $p$-adic ``Weil II''}, Compos. Math. \textbf{142} (2006), 1426--1450.

\bibitem[Ked07]{kedlaya-semi1}
\bysame, \textit{Semistable reduction for overconvergent $F$-isocrystals, I: Unipotence and logarithmic extensions}, Compos. Math. 143 (2007), 1164--1212;
errata, \cite{shiho-log}.

\bibitem[Ked22a]{kedlaya-isocrystals}
\bysame, \textit{Notes on isocrystals}, J. Num. Theory \textbf{237} (2022), 353--394.

\bibitem[Ked22b]{kedlaya-companions2}
\bysame, \textit{\'Etale and crystalline companions, II}, preprint \arXiv{2008.13053v5} (2022).

\bibitem[Kon09]{kontsevich}
M. Kontsevich, \textit{Notes on motives in finite characteristic}, in: \textit{Algebra, Arithmetic, and Geometry: in honor of
Yu. I. Manin} (Vol. II), Progr. Math. 270, Birkh\"auser, Boston, 2009, pp.~213--247.

\bibitem[Kri22]{krishnamoorthy}
R. Krishnamoorthy, \textit{Rank 2 local systems, Barsotti-Tate groups, and Shimura curves}, Alg. and Num. Theory \textbf{16} (2022), 231--259.

\bibitem[KP21]{krishnamoorthy-pal}
R. Krishnamoorthy and A. P\'al, \textit{Rank 2 local systems and abelian varieties}, Sel. Math. \textbf{27} (2021), paper no.~51.

\bibitem[Laf02]{lafforgue}
L. Lafforgue, \textit{Chtoucas de Drinfeld et correspondance de Langlands}, Invent. Math. \textbf{147} (2002), 1--241.

\bibitem[Laf11]{lafforgue-hecke}
V. Lafforgue, \textit{Estim\'ees pour les valuations $p$-adiques des valeurs propres des op\'erateurs de Hecke}, Bull. Soc. Math. France \textbf{139} (2011), 455--477. 

\bibitem[LP95]{larsen-pink}
M. Larsen and R. Pink, \textit{Abelian varieties, $\ell$-adic representations, and $\ell$-independence},
Math. Ann. \textbf{302} (1995), 561--580.

\bibitem[Lau87]{laumon}
G. Laumon, \textit{Transformation de Fourier, constantes d'\'equations fonctionelles et conjecture de Weil},
Publ. Math. Inst. Hautes Études Sci. \textbf{65} (1987), 131--210.

\bibitem[P\'al15]{pal}
A. P\'al, \textit{The $p$-adic monodromy group of abelian varieties over global function fields of characteristic $p$}, preprint \arXiv{1512.03587v1} (2015).

\bibitem[Poo04]{poonen}
B. Poonen, \textit{Bertini theorems over finite fields}, Annals of Math. \textbf{160} (2004), 1099--1127.

\bibitem[Shi10]{shiho-log}
A. Shiho, \textit{On logarithmic extension of overconvergent isocrystals}, Math. Ann. \textbf{348} (2010), 467--512.

\bibitem[Shi14]{shiho-fg}
\bysame, \textit{A note on convergent isocrystals on simply
connected varieties}, preprint \arXiv{1411.0456v1} (2014).

\bibitem[Sim91]{simpson}
C.~T.~Simpson, \textit{Nonabelian Hodge theory}, in: \textit{Proceedings of the International Congress of Mathematicians} (Kyoto, 1990), Math. Soc. Japan, Tokyo, 1991, pp.~747--756.

\bibitem[Stacks]{stacks-project}
The Stacks Project Authors, \textit{Stacks Project},
\url{http://stacks.math.columbia.edu}.

\bibitem[Wie06]{wiesend1}
G. Wiesend, \textit{A construction of covers of arithmetic schemes},
J. Number Theory \textbf{121} (2006), 118--131.

\bibitem[Yu22]{yu}
H. Yu, \textit{Comptage des syst\`emes locaux $\ell$-adiques sur une courbe}, preprint \arXiv{1807.04659v5} (2022);
to appear in Annals of Math.

\end{thebibliography}
\end{document}